\newcommand{\D}{\mathbb{D}}
\newcommand{\Cpp}{\mathbb{CP}^1}
\newcommand{\C}{\mathbb{C}}
\newcommand{\N}{\mathbb{N}}
\newcommand{\Z}{\mathbb{Z}}
\newcommand{\fol}{\mathcal{F}}
\newcommand{\tilf}{\widetilde{\mathcal{F}}}
\newcommand{\calp}{{\mathcal{P}}}
\newcommand{\R}{\mathbb{R}}
\newlength{\dhatheight}
\newcommand{\CP}{{\mathbb{C} \mathbb{P}}}
\def\picill#1by#2(#3)#4
		\vfill\special{illustration #3 scaled #4}}}
\newtheorem{theorem}{Theorem}[section]
\newtheorem{prop}[theorem]{Proposition}
\newtheorem{lemma}[theorem]{Lemma}
\theoremstyle{definition}
\newtheorem{defi}[theorem]{Definition}
\theoremstyle{remark}
\newtheorem{remark}[theorem]{Remark}
\begin{document}

\title[Integrability and meromorphic solutions]{Integrability of vector fields and meromorphic solutions}

\author[J.C. Rebelo]{Julio C. Rebelo}
\address{Institut de Math\'ematiques de Toulouse ; UMR 5219, Universit\'e de Toulouse, 118 Route de Narbonne, F-31062 Toulouse, France.}
\email{\href{mailto:rebelo@math.univ-toulouse.fr}{rebelo@math.univ-toulouse.fr}}

\author[H. Reis]{Helena Reis}
\address{Centro de Matem\'atica da Universidade do Porto, Faculdade de Economia da Universidade do Porto, Portugal.}
\email{\href{mailto:hreis@fep.up.pt}{hreis@fep.up.pt}}


\subjclass[2010]{Primary 34M05, 37F75; Secondary 34A05.}
\keywords{Meromorphic solutions; Liouvillian first integral; Foliated Poincar\'e metric; Riccati and Turbulent foliations}

\begin{abstract}
Let $\fol$ be a one-dimensional holomorphic foliation defined on a complex projective manifold and consider a meromorphic vector field $X$ tangent to $\fol$.
In this paper, we prove that if the set of integral curves of $X$ that are given by meromorphic maps defined on $\C$
is ``large enough'',
then the restriction of $\fol$ to any invariant complex $2$-dimensional analytic
set admits a first integral of Liouvillean type. In particular, on $\C^3$, every rational vector field whose solutions are
meromorphic functions defined on $\C$ admits a non-empty invariant analytic set of dimension~$2$ where the restriction of the
vector field yields a Liouville integrable foliation.
\end{abstract}

\dedicatory{To our friend Yulij S. Il'yashenko, wishing him many happy returns on the occasion of his ${\rm 80}^{\rm th}$ birthday}

\maketitle
\setcounter{tocdepth}{1}

\section{Introduction}

This paper is a contribution to a topic going back to the works of Painlev\'e, Chazy, Garnier et al. which is also closely
connected with distribution value theory in the spirit of Nevanlinna theory (see for example the survey \cite{whoever}). More
precisely, the object of this paper are systems of complex
differential equations whose solutions are uniform functions defined on $\C$ having only ``pole singularities''.
Systems with these characteristics appear in a variety of contexts, especially in Mathematical-Physics, and
they are also distinguished 
by exhibiting remarkable behaviors often shedding light in apparently unrelated problems.

Throughout the paper, we will work in the global setting of (holomorphic/meromorphic) {\it vector fields}\, defined on (compact) projective manifolds.
Clearly, this setting encompasses the case of differential equations, or systems of differential equations, appearing in the classical literature.
As to terminology, given a vector field
$X$, the phrases {\it solution of $X$}\, and {\it integral curve of $X$}\, will be used as synonyms.
We also note that our definition of meromorphic solution of $X$ basically means that the solution
in question is defined on $\C$
except by a discrete set where it is allowed to have poles, see Section~\ref{Preliminaries} for detailed definitions. Beyond classical
meromorphic functions, our setting also includes systems that can be integrated by means of
elliptic functions and of certain Painlev\'e transcendents (see below for further examples).

In the sequel, we will first state our main results, namely Theorems~A and~B, and then proceed to a more detailed discussion
of their applications along with examples of situations covered by them.

The definition of Liouvillean integrability can be found in Section~\ref{proofsoftheorems} or, alternatively,
in any standard reference such as \cite{singer}. We note, however, that the corresponding Liouville integrable foliations
will explicitly be classified in the course of the discussion (see Theorem~\ref{classification-foliation} as well as the proof of Theorem~A).
In other words, the notion of Liouvillean integrability is only used here as a unifying property allowing us to have succinct statements
for Theorems~A and~B, as opposed to a list of explicit possibilities.

\vspace{0.2cm}

\noindent {\bf Theorem A}. {\sl Let $X$ be a meromorphic vector field on an $n$-dimensional projective manifold $M$ and denote by $\fol$
the associated singular holomorphic foliation.
Denote by $\mathcal{M} \subset M$ the set of leaves of $\fol$ parameterized by integral curves
$\phi (T)$ of $X$ defining a meromorphic map from $\C$ to $M$ (and note that two integral curves parameterizing the same leaf differ
by a translation in time).
Assume that the Hausdorff dimension of $\mathcal{M} \subset M$ is strictly greater than $2n-2$. Then:
\begin{itemize}
	\item No enlarged leaf of $\fol$ (see Section~\ref{Preliminaries}) is a hyperbolic Riemann surface.
	
	\item For every invariant irreducible $2$-dimensional analytic set $N$ not contained in ${\rm Sing}\, (\fol)$, the
	restriction of $\fol$ to $N$ admits a non-constant first integral of Liouvillean type.
\end{itemize}
}

\vspace{0.2cm}

Before proceeding further, some comments about the above statement are in order. First, loosely speaking, the fundamental assumption
of Theorem~A is that the set $\mathcal{M} \subset M$ is ``large'' in a suitable sense. This can be made more accurate: basically, all it is needed
is to ensure that $\mathcal{M}$ is too large to be the polar set of a non-trivial plurisubharmonic function. The assumption
that the Hausdorff dimension of $\mathcal{M}$ exceeds $2n-2$ ensures this is the case, see Theorem~3.13 in
\cite{Who??}. Variants of this condition
can also be used, for example, we might as well assume that the logarithmic capacity of $\mathcal{M}$ is strictly positive in the spirit
of Suzuki's classical works, \cite{Suzuki}. In most cases, however, none of these conditions are easier to verify than the simpler
assumption that the Lebesgue measure of $\mathcal{M}$ is strictly positive.

As a second remark, we might also notice that
Theorem~A is sharp in the sense that meromorphic first integrals do not exist in general. For example, the
{\it Airy vector field} defined by
$$
\frac{\partial}{\partial z_1}  + z_3 \frac{\partial}{\partial z_2} + z_1 z_2 \frac{\partial}{\partial z_3}
$$
has holomorphic solutions on $\C^3$ but the underlying foliation can be compactified so as to have
Zariski-dense leaves on suitable invariant $2$-dimensional analytic sets (cf. \cite{formal}).

Next, note that the assumption of Theorem~A is birationally invariant so that the statement can be applied in every birational model of
$M$, $\fol$, or $X$. In particular, it becomes particularly effective in dimension~$3$ basically due to two reasons, namely:
\begin{itemize}
	\item The divisor of poles of $X$ has dimension~$2$ and is necessarily invariant under $\fol$ provided that all solutions
	of $X$ are meromorphic functions.
	
	\item In dimension~$3$, a result due to McQuillan and Panazzolo allows one to reduce the singularities of the foliation $\fol$
	by means of birational transformations, see \cite{daniel2}, \cite{danielMcquillan} (see also \cite{helenajulio_RMS} for a different
	proof).
\end{itemize}

Now, we can state Theorem~B as follows:

\vspace{0.2cm}

\noindent {\bf Theorem B}. {\sl Let $X$ be a meromorphic vector field on a projective manifold $M$ of dimension~$3$
	and denote by $\fol$ the associated singular holomorphic foliation. Assume that the solutions of $X$ are meromorphic functions
	defined on $\C$. Then, there exists a birational model $M'$, $\fol'$, $X'$ for $M$, $\fol$, and $X$ where one of
	the following holds:
	\begin{itemize}
		\item[(1)] $X'$ is a holomorphic vector field on $M'$ and all singular points of $\fol'$ are elementary,
		i.e., they have at least one eigenvalue different from zero.

		\item[(2)] $X'$ has non-empty divisor of poles $(X')_{\infty}$ which is fully invariant by $\fol'$ and such that the restriction of $\fol'$
		to any irreducible component of $(X)_{\infty}$ has a Liouvillean first integral. Moreover, all singular points of $\fol'$
		are elementary.
	\end{itemize}}

\vspace{0.1cm}

\noindent {\bf Remark}. In item~(2), the resulting manifold $M'$ may contain finitely many orbifold singular points of type $\Z /2\Z$. Naturally,
at these points, the notion of eigenvalues must be adequately defined, see \cite{daniel2}, \cite{danielMcquillan}, or \cite{helenajulio_RMS}.

Note that in the case $M = \CP^3$, Theorem~B already brings some progress to the classical problem of understanding
systems of rational differential equations on $\C^3$ with meromorphic solutions defined on $\C$.

Also, when
item~(1) above holds, there are several techniques to investigate in detail the pair $M$ and $X$, see
for example \cite{Akh} and \cite{EMS}. Meanwhile, when the situation described in item~(2) happens,
Theorem~B provides a setting very much comparable to the analogous
$2$-dimensional situation exploited in \cite{Guillot-Rebelo}. In fact, Theorem~B adequately deals with one of the fundamental difficulties one
faces when trying to extend to dimension~$3$
some important results known in dimension~$2$ such as those found in \cite{brunellacomplete}, 
\cite{Guillot-Rebelo}, \cite{bustinduy}, \cite{alvaro-Giraldo-1}, \cite{alvaro-Giraldo-2}, \cite{guillotadvances} to name only a few.
This statement deserves further elaboration.
For this, it is enough to restrict our discussion to the already very interesting case of vector fields that are either
strictly rational or polynomial with degree at least~$2$ and that have
meromorphic solutions defined on $\C$.
In particular, this includes the case of {\it complete}\, polynomial vector fields on $\C^3$. Here, the reader is reminded
that a holomorphic vector field is said to be {\it complete}\,
if its solutions are holomorphic functions defined on all of $\C$ so that these vector fields naturally fulfill the conditions of Theorems~A or~B.
The setting of Theorem~B also covers the examples provided by
Painlev\'e equations P-I, P-II, P-IV, along with the modified versions of P-III and P-V, 
as well as the preliminary results obtained by Garnier in the case of ``special systems of equations'',
see \cite{Ince}, \cite{gromak}, \cite{garnier}.
Another very interesting example of polynomial vector field on $\C^3$ satisfying the
conditions of Theorem~B and related to both Painlev\'e and Poincar\'e problems can be found in \cite{guillotCRAS}, \cite{alcides}.
Plenty of additional examples can be found in \cite{guillotChazy}.
In a similar order of ideas, Nagata's celebrated automorphism of $\C^3$ is time-one map of a complete
polynomial vector field \cite{nagata}.

%
%
%

To appreciate the contribution brought by Theorem~B to the area, it suffices to consider the case of {\it homogeneous}
complete polynomial vector fields on $\C^3$. If $X$ is one such vector field,
then its singular point at the origin --- or equivalently its behavior on
a neighborhood of the plane at infinity $\Delta$ --- can be investigated by means of blow ups. This quickly leads
to a number of constraints in the structure of these vector fields as shown in \cite{guillotFourier} in the more general context
of semicomplete vector fields (a vector field is said to be semicomplete if all of its solutions admit a maximal domain
of definition in $\C$, see Remark~\ref{defnc_semicomplete-vf} for detail).
However, even in certain favorable cases, we are not
able to decide whether or not these {\it necessary conditions}\, are also sufficient. More precisely, in general, the vector field $X$
induces a foliation on $\Delta$ whose dynamics may --- in principle --- be very complicated (cf. \cite{lorayandjulio}) and this
prevents us from fully understanding the structure of $X$ so as to decide whether or not the vector field
is semicomplete. It is exactly this difficulty that is lifted by Theorem~B when the vector fields are {\it complete} since
it is straightforward to derive from Theorem~B and from Theorem~\ref{classification-foliation} a classification of homogeneous
complete polynomial vector fields on $\C^3$.

In particular, in the wide open problem of classifying (non-homogeneous) complete polynomial vector fields on $\C^3$,
Theorem~B still plays the
same important role. In fact, following the point of view of \cite{Guillot-Rebelo},
Theorem~B basically provides a pretty accurate description of the structure of $X$ around $\Delta$. A systematic investigation of the possible
structures, in turn, is bound to bring significant
progress on the problem of classifying the vector fields in question. As a side note, we might also refer to the survey \cite{survey_RR}
for a comprehensive discussion of ``core dynamics'' of singular points and their applications which is very much
the same issue tackled by Theorem~B in the case of vector fields with meromorphic solutions.

\noindent {\bf Outline of the paper}.
Let us close this introduction by outlining the structure of this paper. Throughout this work, a Riemann surface is said to
be {\it parabolic} (resp. {\it hyperbolic}) if it is a quotient of $\C$ (resp. the unit disc $\D$). A crucial point in the proof
of the preceding theorems is to be able to conclude that
{\it no leaf}\, of a foliation $\fol$ is hyperbolic provided that there are ``sufficiently many'' non-hyperbolic leaves. Once
it is established that {\it all leaves}\, of $\fol$ are either parabolic or rational curves,
there will also follow that the same holds for every
birational model of the foliation in question: and this includes leaves contained in ``exceptional'' (small)
sets. With this conclusion in place, we will be able
to take advantage of the $2$-dimensional setting to derive Theorems~A and~B (more on this below).

The problem of ruling out the existence of hyperbolic leaves is, however, a subtle one. In particular, it has some
intersection with the general topic of {\it simultaneous parabolic uniformization}\, for foliations as discussed for example in
\cite{alexey}, \cite{etienne}. Compared to the quoted works, the fact that our foliations are singular poses some new challenges
as pointed out below.

Let $M$, $X$, and $\fol$ be as in Theorem~A. The leaves of $\fol$ are defined as the leaves of the regular foliation induced
by $\fol$ in the complement of its own singular set. In this sense, the fact that an integral curve of $X$ through a point
of $L$ yields a meromorphic map defined on all of $\C$ {\it does not}\, prevent $L$ from being a quotient of $\D$ due to
the possible presence of poles that might correspond to singular points of $\fol$. In fact, what this meromorphic map
yields is an {\it entire curve tangent to $\fol$}\, namely, a non-constant holomorphic map $\varphi : \C \rightarrow M$
whose image is invariant under $\fol$. An entire curve tangent to $\fol$ can therefore {\it contain singular points of $\fol$}\, which
inevitably raise the issue of deciding whether or not these singular points should be {\it considered as ``belonging'' to the
corresponding leaf of $\fol$}.
The possible inclusion of singular points of $\fol$ in leaves clearly
impacts the hyperbolic/hon-hyperbolic nature of the leaf in question. Furthermore, naturally, 
the possible (local) wild behavior of $\fol$ near its singular points adds to the general difficulties.
The following example,
borrowed from \cite{guillot-IHES}, further illustrates the subtle nature of these issues. In his study of
(hyperbolic) Halphen vector fields \cite{guillot-IHES}, A. Guillot has
constructed examples of {\it complete}\, holomorphic vector fields $X$ on suitable complex manifolds $M$ of dimension~$3$
possessing an isolated singular point $P$ where $X$ is conjugate to a (quadratic, hyperbolic) Halphen vector field.
In this construction, all regular leaves of the underlying foliation $\fol$ are parabolic since $X$ is complete with isolated singular points.
However, by blowing up the singular point $P$, we obtain a new vector field
$\widetilde{X}$, still complete, on a manifold $\widetilde{M}$. The vector field $\widetilde{X}$ has a non-empty divisor of zeros
which coincides with the exceptional divisor $\Pi^{-1} (P)$ (isomorphic to $\CP^2$). Furthermore, the exceptional divisor $\Pi^{-1} (P)$
is invariant by the transformed foliation
$\widetilde{\fol}$. In particular, there are leaves of $\widetilde{\fol}$ that are contained in $\Pi^{-1} (P)$. It turns out, however, as
shown in \cite{guillot-IHES}, that except for three rational curves (minus singular points), all
the leaves of $\widetilde{\fol}$ contained in $\Pi^{-1} (P)$ are --- genuinely --- hyperbolic Riemann surfaces.
This contrasts with the fact that regular leaves of $\widetilde{\fol}$ not contained in $\Pi^{-1} (P)$
are quotients of $\C$. The co-existence of ``many'' parabolic leaves and hyperbolic leaves is striking from the perspective of Theorem~A.
In addition, the restriction of
$\widetilde{\fol}$ to $\Pi^{-1} (P)$ has a complicated dynamics far beyond the realm of Liouville integrable systems.
The apparent contradiction between this example and our results is explained by the fact that the manifolds
where hyperbolic Halphen vector fields become complete are far from having a K\"ahler structure.

Despite what precedes,
in the algebraic setting (or on K\"ahler compact manifolds), there is a general theorem due to M. Brunella
about simultaneous uniformization: the foliated Poincar\'e metric varies in a plurisubharmonic way, up to
an adequate definition of ``leaf'' \cite{subharmonic-variation}, \cite{Brunella-notes}, \cite{Ivaskovich}.
We will make fundamental use of this theorem to conclude that no ``leaf'' of a foliation
as in Theorem~A can be a quotient of $\mathbb{D}$ provided that the definition of ``leaf'' is suitably modified.
In fact, the specific notion of ``leaf'' required for Brunella's theorem to hold often makes
applications of his theorem less immediate. In the present case, the issue is
further compounded by the fact that we made no assumption on how the poles of the solutions of $X$ vary with the initial condition.
Having no control on the behavior of poles prevents us
from resorting to standard techniques about extension of analytic sets \cite{bishop},
\cite{chirka} that would be helpful if, for example, poles were assumed to vary analytically with the initial condition
(cf. Section~\ref{Appendix}).
Basically, the key to overcome these issues turns out to be a remark about singularities of $1$-dimensional foliations
with ``many separatrices'', along with their behavior under sequences of blow ups (see Section~\ref{Dicriticalnessoffoliations}).
In fact, a by-product of the discussion conducted in Section~\ref{Dicriticalnessoffoliations} is a different interpretation
of Brunella's definition of ``leaf'' used in the above quoted papers. As a consequence, in an appropriate sense,
it can be said that from the point of view of potential theory,
Brunella's theorem in \cite{subharmonic-variation} and related issues are all about {\it regular foliations}.
We believe this remark holds interest in itself and might be helpful in general problems about regularity of the foliated
hyperbolic metric. Furthermore, results on the variation of the polar set of solutions with the initial conditions can
straightforwardly be derived from the same discussion.

Once we know that no leaf of $\fol$ is hyperbolic, there follows that the leaves of the restriction of $\fol$ to an invariant (singular)
analytic surface are either parabolic or rational curves. Much information on the structure of these foliations in $2$-dimensional
ambient is available in Brunella's notes \cite{IMPA-book}. Up to adding some  standard material
involving (global, holomorphic) vector fields on algebraic surfaces and some basic facts about Kleinian groups Theorem~A
will follow. By then, Theorem~B will be an easy consequence of Theorem~A and the general results about reduction of singularities
for $1$-foliations presented in \cite{daniel2}, \cite{danielMcquillan}, and \cite{helenajulio_RMS}.

\vspace{0.2cm}

\noindent
\textbf{Background on Hausdorff dimension}. The reader less familiar with the notion of Hausdorff dimension emphasized in this
paper will find the necessary background information in \cite{Who??} (Theorem~3.13) and in, for example, \cite{falconer}. The results
used in the course of this paper can be summarized by the following statements:
\begin{itemize}
	\item A countable union of sets with Hausdorff dimension strictly smaller than $\kappa$ still has Hausdorff dimension strictly smaller than~$\kappa$.
	
	\item If $\mathcal{M}$ is a set where the $d$-dimensional Lebesgue measure is well defined, i.e., it yields a non-zero locally finite
	measure on $\mathcal{M}$, then the Hausdorff dimension of $\mathcal{M}$ equals~$d$.
	
	\item If $M$ is a projective (or more generally K\"ahler) compact manifold of complex dimension~$n$, then no subset $\mathcal{M}$
	whose Hausdorff dimension is strictly greater than $2n-2$ can be contained in a (local) polar set of a non-trivial plurisubharmonic function.
\end{itemize}
In particular, as mentioned before, the role of Hausdorff dimension in the statement of Theorem~A can be adapted to various types
of ``capacities'' used in potential theory.

\vspace{0.2cm}

\noindent
\textbf{Acknowledgments}. J. Rebelo and H. Reis were partially supported by CIMI through the
project ``Complex dynamics of group actions, Halphen and Painlev\'e systems''.
H. Reis was also partially supported by CMUP, which is financed by national funds through FCT – Funda\c{c}\~ao para
a Ci\^encia e Tecnologia, I.P., under the projects UIDB/00144/2020 and ``Means and Extremes in Dynamical Systems''
with reference PTDC/MAT-PUR/4048/2021.

This paper was motivated by discussions with A. Belotto and M. Klimes, we are grateful to both of them for their comments.
Last but not least, we thank the referee for carefully reading our work and for his/her very valuable suggestions.


\section{Vector fields, foliations, and enlarged leaves}\label{Preliminaries}

It is convenient to begin by recalling the definition of singular $1$-dimensional holomorphic foliation on a complex manifold,
also known as (singular) holomorphic foliation by Riemann surfaces. Consider a covering of a complex manifold $M$ by coordinate charts
$\{ (U_k, \varphi_k)\}$.

\begin{defi}
	\label{foliation_definition}
	Let $M$ and $\{ (U_k, \varphi_k)\}$ be as above. A singular holomorphic foliation $\fol$ on $M$ consists of a collection
	of holomorphic vector fields $Y_k$ satisfying the following conditions:
	\begin{itemize}
		\item For every $k$, $Y_k$ is a holomorphic vector field defined on $\varphi_k (U_k) \subset \C^n$ whose
		singular set has codimension at least~$2$.
		
		\item Up to the evident identification of $Y_k$ with a vector on $U_k$, whenever $U_{k_1} \cap U_{k_2} \neq \emptyset$
		we have $Y_{k_1} = g_{k_1 k_2} Y_{k_2}$ for some $g_{k_1 k_2} \in \mathcal{O}^{\ast} (U_{k_1} \cap U_{k_2})$.
		
	\end{itemize}
\end{defi}

The {\it singular set}\, ${\rm Sing}\, (\fol)$ of a foliation $\fol$ is then defined as the union over~$k$ of the sets $\varphi_k^{-1}
({\rm Sing}\, (Y_k)) \subset M$, where ${\rm Sing}\, (Y_k)$ stands for the singular set of $Y_k$. Therefore
the singular set of any $1$-dimensional holomorphic foliation has codimension at least two.
To abridge notation, the phrase {\it holomorphic foliation}\, will mean
a singular $1$-dimensional holomorphic foliation.
For the basics of foliation theory as required for this work, the reader is referred to any of the standard books in the area.
For example, \cite{standardfoliation} or \cite{IY} are both well adapted to our needs.

Consider a complex manifold $M$ equipped with a non-trivial meromorphic vector field $X$ whose divisor of poles is denoted by
$(X)_{\infty}$. Away from $(X)_{\infty}$ and from the zero-set of $X$, the local orbits of $X$
define a regular holomorphic foliation. The interest of Definition~\ref{foliation_definition} largely
stems from the fact that this foliation can be holomorphically extended to all of $M$. Indeed, we have:

%
%
%
%
%

\begin{lemma}
\label{lemma_meromorphicvfieldfoliation}
Let $X$ be a meromorphic vector field defined on a complex manifold $M$. Then the local orbits of $X$ induce a
holomorphic foliation $\fol$ defined on all of $M$.
\end{lemma}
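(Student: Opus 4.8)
The plan is to work locally on each coordinate chart and then check that the local foliations patch together into a global object satisfying Definition~\ref{foliation_definition}. The key observation is that a meromorphic vector field is, locally, a quotient of a holomorphic vector field by a holomorphic function, and that multiplying a vector field by a nonvanishing holomorphic function does not change its local orbits. So the content of the lemma is really a bookkeeping statement about how to choose, in each chart, a holomorphic representative with singular set of codimension $\geq 2$, and how these representatives differ by units on overlaps.

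First I would fix a chart $(U_k, \varphi_k)$ and transport $X$ to $\varphi_k(U_k) \subset \C^n$, writing $X = \sum_{j=1}^n a_j \, \partial/\partial x_j$ where each $a_j$ is a meromorphic function. Shrinking $U_k$ if necessary so that it is biholomorphic to a polydisc (in particular, so that $\mathcal{O}(\varphi_k(U_k))$ is a UFD and every meromorphic function is a global quotient of holomorphics), I can write $a_j = p_j / q_j$ with $p_j, q_j$ holomorphic and $\gcd(p_j,q_j) = 1$. Let $q$ be a least common multiple of the $q_j$, so that $q X = \sum_j (q a_j) \, \partial/\partial x_j =: Y_k$ has holomorphic coefficients $b_j := q a_j$. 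Now I would extract the codimension condition: let $h_k = \gcd(b_1, \dots, b_n)$ in the UFD $\mathcal{O}(\varphi_k(U_k))$, and replace $Y_k$ by $Y_k / h_k$ (still holomorphic, since $h_k$ divides each $b_j$). After this normalization, the zero set of $Y_k$ is cut out by the ideal $(b_1/h_k, \dots, b_n/h_k)$ whose generators have no common factor; hence this zero set contains no divisor, i.e.\ it has codimension at least two. This is the $Y_k$ I attach to the chart $U_k$.

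Next I would check the cocycle condition. On an overlap $U_{k_1} \cap U_{k_2}$ (which I may again assume to be connected, by refining the cover), both $Y_{k_1}$ and $Y_{k_2}$ are holomorphic vector fields that are meromorphic multiples of $X$ — indeed each is of the form (holomorphic function)$\,\cdot X$ after the identification of tangent spaces via the change of coordinates $\varphi_{k_2} \circ \varphi_{k_1}^{-1}$ (whose Jacobian is a holomorphic invertible matrix, so it preserves holomorphy of vector field components). Therefore $Y_{k_1} = g_{k_1 k_2} Y_{k_2}$ for a meromorphic function $g_{k_1 k_2}$ on the overlap. It remains to see $g_{k_1 k_2}$ is a unit, i.e.\ holomorphic and nonvanishing. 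If $g_{k_1 k_2}$ had a pole along some hypersurface, then near a generic point of that hypersurface $Y_{k_1}$ would vanish to some order while $Y_{k_2}$ would not — but one checks that the divisorial part of the zero divisor of $Y_{k_i}$ is forced to be trivial by the normalization in the previous step (each $Y_{k_i}$ has singular set of codimension $\geq 2$), so $g_{k_1 k_2}$ can neither have a zero nor a pole along any hypersurface; since the set of zeros/poles of a meromorphic function is purely of codimension one, $g_{k_1 k_2} \in \mathcal{O}^{\ast}(U_{k_1} \cap U_{k_2})$. The Hartogs-type extension across codimension-$2$ sets (or the fact that a meromorphic function with no polar divisor is holomorphic) is what makes this work. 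Finally, on the locus where $X$ itself is holomorphic with codimension-$\geq 2$ zero set, one sees $Y_k$ differs from $X$ by a unit, so the regular foliation defined by the local orbits of $X$ away from $(X)_\infty \cup \{X = 0\}$ genuinely coincides with the one cut out by the $Y_k$; hence $\fol$ extends the orbit foliation of $X$, as claimed.

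\emph{Main obstacle.} The only genuinely delicate point is the normalization that produces representatives $Y_k$ with singular set of codimension at least two together with the control on the transition functions. Concretely, one must be careful that dividing out the $\gcd$ of the components is a well-defined operation independent of the choices (it is, up to a unit, which is exactly the flexibility allowed by Definition~\ref{foliation_definition}), and one must invoke that on a polydisc $\mathcal{O}$ is a UFD so that "$\gcd$'' makes sense and that a codimension-$\geq 2$ analytic set supports no nonzero effective divisor. Everything else — the change-of-coordinates computation, the reduction to connected polydisc charts — is routine and I would not spell it out in detail.
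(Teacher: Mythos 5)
Your argument is correct and follows essentially the same route as the paper's proof, which likewise factors the divisor of zeros and poles out of the local meromorphic representative of $X$ (invoking the codimension-one nullstellensatz) to obtain holomorphic vector fields $Y_k$ with singular set of codimension at least two, the transition functions then being units as required by Definition~\ref{foliation_definition}. One small caveat of wording: the ring $\mathcal{O}$ of a polydisc is not literally a UFD (functions with infinitely many zero components admit no finite factorization into irreducibles), but the facts you actually use --- that meromorphic functions are quotients of holomorphic ones, that finitely many holomorphic functions admit a gcd, and that a codimension-two set carries no divisor --- do hold on a polydisc (or one may simply work with germs, whose local rings are UFDs, and shrink the charts), so the argument is unaffected.
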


\begin{proof}
Consider a point $p \in M$. By definition, in a suitable coordinate $\varphi_k : U \subset M \rightarrow \C^n$
defined around $p$, the vector field $X$ is represented by a meromorphic vector field $X_{k}$ defined
on $\varphi_k (U) \subset \C^n$. Owing to the standard Hilbert nullstellensatz in codimension~$1$, the divisor of zeros and poles
of $X_{k}$ can be factored out so as to have
$$
X_k = \frac{f}{g} \, Y_k \, ,
$$
where $f$ and $g$ are holomorphic functions and where $Y_k$ is a holomorphic vector field whose singular set has codimension at least~$2$.
It is now immediate
to check that the collection of vector fields $\{ Y_k\}$ endows $M$ with a singular holomorphic foliation $\fol$ according to
Definition~\ref{foliation_definition}.
\end{proof}

The foliation $\fol$ arising from a vector field $X$ as in Lemma~\ref{lemma_meromorphicvfieldfoliation} will be called the
{\it foliation associated with $X$} or the {\it underlying foliation of $X$}. In this paper, the 
{\it leaves}\, of a singular holomorphic foliation $\fol$ on a manifold $M$ are defined as the leaves of the (regular) foliation
obtained by restricting $\fol$ to the open manifold $M \setminus {\rm Sing}\, (\fol)$.

\begin{remark}
{\rm It is worth emphasizing the following consequence of Lemma~\ref{lemma_meromorphicvfieldfoliation}.
Consider a meromorphic vector field $X$ defined on $M$ and assume that the divisor of zeros and poles
of $X$ is not empty. Then no irreducible component of this divisor can be contained in the singular set of $\fol$ since the latter
set has codimension at least~$2$. Hence the ``generic'' point of the divisor
of zeros and poles of $X$ is regular for $\fol$. Furthermore, irreducible components of this divisor may or may not be invariant
by $\fol$. In particular, an invariant component will contain regular leaves of the foliation $\fol$ despite the fact that $X$
either vansihes identically or is not even defined on the component in question.}
\end{remark}

Next, let us make accurate the notion of {\it meromorphic solution defined on $\C$}\, for a vector field.
For the general definition of meromorphic map the reader is referred to \cite{remmert} or to Section~\ref{meromorphicsolutions_brunellatheorem}.
Consider a meromorphic vector field $X$ on a complex manifold $M$ and denote by $\fol$ its associated foliation.

\begin{defi}\label{defining_meromorphic-integralcurve}
Given a point $p \in M$ that is regular for $X$, we
say that {\it the solution of $X$ through $p$ is a meromorphic map (function) defined
on $\C$}\, if the local solution $\phi$ of $X$ satisfying $\phi (0) = p$ fulfills the following conditions:
\begin{itemize}
	\item[(a)] There is a discrete set $\mathcal{D} = \{ t_i\}_{i \in \N} \subset \C$ such that $\phi$ possesses a holomorphic extension $\Phi$ to
	$\C \setminus  \mathcal{D}$.
	
	\item[(b)] The points $t_i \in \mathcal{D}$ are poles for $\Phi$, i.e., $\Phi : \C \rightarrow M$ is a meromorphic map.
\end{itemize}
\end{defi}
Clearly, if the solution of $X$ through $p$ is a meromorphic map on $\C$, then the same applies to every point in the orbit
of $p$. We will then say that {\it the leaf $L$ of $\fol$ through $p$ is meromorphically
parameterized by $\C$}. Up to dropping the condition of $\phi$ being a solution of $X$, we note that
the preceding definition makes sense for every leaf of $\fol$, including those contained in the divisor of zeros and poles of $X$.
In fact, if we will say that a leaf $L$ of $\fol$ is {\it meromorphically parameterized}\, by $\C$ if there exists some (non-constant) meromorphic
map $\phi : \C \rightarrow M$ satisfying items~(a) and~(b) above and such that $\phi (\C \setminus  \mathcal{D}) = L$. Finally,
we will say that one such parameterization $\phi$ is obtained {\it via $X$}\, if $\phi$ is a solution of $X$.

\begin{remark}
{\rm From the above definition we see that vector fields that can be integrated by means of elliptic functions fit in the framework
of Theorems~A and~B. The same applies to the vector fields arising from Painlev\'e~I, II, and~IV equations, see \cite{gromak}.
Several other examples were already quoted in the introduction.}
\end{remark}

Now, we have:

\begin{lemma}
\label{lemma_invaraincepoledivisor}
Let $X$ be a meromorphic vector field defined on a complex manifold $M$ and denote by $S$ an irreducible
(non-empty) component of the divisor of poles of $X$. If $p \in S$ is a regular point of $\fol$ at which
$\fol$ and $S$ are transverse, then the leaf $L$ of $\fol$ through $p$ is not meromorphically
parameterized by $\C$ via $X$.
\end{lemma}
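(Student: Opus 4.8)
The plan is to argue by contradiction and exploit the fact that near a point of transverse intersection the vector field $X$ has a simple pole along $S$, forcing the time parameter to blow up in finite ``time'' as the solution approaches $S$. First I would choose local coordinates $(x_1,\dots,x_n)$ centered at $p$ in which $S = \{x_1 = 0\}$ and in which, by Lemma~\ref{lemma_meromorphicvfieldfoliation}, the vector field is written as $X = \frac{1}{x_1^m} Y$ for some holomorphic vector field $Y$ with $\mathrm{codim}\,{\rm Sing}\,(Y) \ge 2$ and some integer $m \ge 1$ (here $m \ge 1$ precisely because $S$ is a component of the divisor of poles). The transversality hypothesis says that the leaf of $\fol$ through $p$ — which is an integral curve of $Y$ — crosses $\{x_1=0\}$ transversely, i.e. the $x_1$-component of $Y$ is nonzero at $p$, so after a further holomorphic change of coordinates I may assume $Y = \fpartx{x_1} + (\text{higher order in the remaining directions})$, and hence $X = \frac{1}{x_1^m}\fpartx{x_1} + \cdots$ near $p$.

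Next I would look at the solution of $X$ passing through a point $q$ near $p$ with $q \notin S$, parameterized by time $T$ with $q$ at $T = 0$. Along this solution the first coordinate satisfies $\dot{x}_1 = x_1^{-m}(1 + o(1))$, so $\frac{d}{dT}(x_1^{m+1}) = (m+1)(1 + o(1))$, which shows that $x_1(T)^{m+1}$ moves at essentially unit speed; consequently the solution reaches $S = \{x_1 = 0\}$ at some finite time $T_0$, and moreover $x_1(T) \sim c\,(T - T_0)^{1/(m+1)}$ as $T \to T_0$. If $m \ge 1$ this exponent $1/(m+1)$ is not a reciprocal integer in a way compatible with meromorphy — more to the point, the solution map $T \mapsto \phi(T) \in M$ has a branch point (or at best, if we only had $m=0$, which is excluded, a removable/pole behavior) at $T = T_0$: the coordinate $x_1$ of $\phi$ behaves like a fractional power of $T - T_0$ and therefore does not extend to a single-valued meromorphic function in any punctured neighborhood of $T_0$. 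This contradicts the assumption that the leaf $L$ is meromorphically parameterized by $\C$ via $X$, because such a parameterization would have to agree with $\phi$ (solutions are unique) and hence would be multivalued near $T_0$. One must be slightly careful that the point $q$ chosen near $p$ indeed lies on a leaf that is meromorphically parameterized via $X$; but this is automatic since, as noted right after Definition~\ref{defining_meromorphic-integralcurve}, being meromorphically parameterized via $X$ is a property of the leaf, and by taking $q$ on the leaf $L$ itself (or on leaves through a neighborhood of $p$ in the hypothesis, if the statement is read that way) we are exactly in the situation under consideration.

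The main obstacle I anticipate is making the ``the solution reaches $S$ in finite time with a fractional-power singularity'' step fully rigorous in the presence of the higher-order terms of $Y$ in the other coordinates: one has to ensure that the estimate $x_1(T)^{m+1} \approx (m+1)(T - T_0)$ survives the coupling with the dynamics of $x_2,\dots,x_n$, and that the time $T_0$ is genuinely finite and not pushed to infinity. This is handled by a standard continuity/contraction argument on a small polydisc: since $\|Y\|$ is bounded below and above near $p$ and $Y$ is transverse to $S$, the trajectory stays in a small neighborhood and the $x_1$-equation dominates, giving both the finiteness of $T_0$ and the asymptotics $x_1(T) = c\,(T-T_0)^{1/(m+1)}(1 + o(1))$ with $c \ne 0$; the other coordinates extend holomorphically across $T_0$. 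Once this asymptotic is established, the contradiction with single-valuedness (for $m \ge 1$) is immediate, which completes the proof.
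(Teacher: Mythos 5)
Your proposal is correct and uses essentially the same idea as the paper: near the transverse crossing of the pole divisor the flow behaves like a fractional power $(T-T_0)^{1/(k+1)}$ with $k\geq 1$, hence is multivalued and cannot agree with a meromorphic parameterization of $L$ by $\C$. The paper's only (but useful) streamlining is to first restrict $X$ to the leaf $L$ itself, where transversality makes $x_1|_L$ a coordinate and the one-dimensional normal form gives $X=z^{-k}\,\partial/\partial z$ exactly, so the integral curve is literally $\sqrt[k+1]{(1+k)t+z_0^{k+1}}$ and the coupling with the remaining coordinates --- which you identify as the main obstacle and treat only sketchily --- never arises.
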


\begin{proof}
Consider the restriction $X_{\vert L}$ of $X$ to $L$ viewed as a meromorphic vector field on the Riemann surface $L$. Let $z$
be a local coordinate around $p\in L$, $p \simeq 0 \in \C$. Since $X$ has a pole at $p$, there follows that $X$ takes on
the form
$$
X = z^{-k} f(z) \partial /\partial z
$$
in the coordinate $z$. Here $k \geq 1$ and $f$ is a holomorphic function satisfying $f(0) \neq 0$. In fact, the elementary theory
of normal forms for $1$-dimensional complex vector fields ensures that the coordinate $z$ can be chosen so as to have
$f=1$ (constant). In other words, without loss of generality we can suppose that $X =  z^{-k} \partial /\partial z$. The integral curve
of $X$ with initial point $z_0 \neq 0$ is therefore $\phi (t)  = \sqrt[k+1]{(1+k)t + z_0^{k+1}}$. This function is multivalued
since $k+1 \geq 2$ and therefore does not yield a meromorphic function defined on $\C$. This establishes the lemma.
\end{proof}

\begin{remark}\label{defnc_semicomplete-vf}
{\rm The statement of Lemma~\ref{lemma_invaraincepoledivisor} actually holds for semicomplete vector fields, see
for example \cite{Guillot-Rebelo}. Note that vector fields all of whose solutions are meromorphic in the sense of
Definition~\ref{defining_meromorphic-integralcurve} are necessarily semicomplete. This statement follows immediately
from the definition of semicomplete vector fields which we recall for the convenience of the reader. Let then $X$
be a holomorphic vector field defined on a complex
manifold $U$. We say that $X$ is {\it semicomplete on $U$}\, if for every point $p \in U$, there exists a solution $\phi : \Omega_p \subset
\C \rightarrow U$, with $\phi (0) =p$, which is {\it maximal} in the following sense: for every sequence $\{ T_i \} \subset \Omega_p$
of points converging to a point $\hat{T}$ lying in the boundary of $\Omega_p$, the sequence $\{ \phi (T_i) \} \subset U$ leaves every
compact set contained in $U$. Slightly more generally, a meromorphic vector field is said to be semicomplete on $U$ if its
semicomplete away from its pole divisor in $U$. Naturally this condition is always satisfied by meromorphic vector fields all of whose
solutions are meromorphic so that the claim follows.}
\end{remark}

Brunella's theorem on the regularity of the leafwise Poincar\'e metric requires a more specific notion of ``leaf'' allowing,
in some cases, a singular point of $\fol$ to ``belong to a leaf'', see \cite{subharmonic-variation} or \cite{Brunella-notes}.
To avoid misunderstandings, throughout this work, the word {\it leaf}
(of $\fol$) will always be meant in the sense so far used: a leaf of the regular foliation induced by $\fol$ on $M \setminus {\rm Sing}\, (\fol)$.
In turn, the variant of the notion of leaf required
by Brunella's theorem will be referred to as {\it enlarged leaf}. As indicated,
an {\it enlarged leaf}\, $\widehat{L}$ of $\fol$ is obtained out of a leaf $L$ of $\fol$ by (possibly) adding singular
points of $\fol$ to $L$. The rule to determine whether or not a singular point of $\fol$ should be ``added'' to a given leaf involves only
local information so that we can place ourselves
in the case of a foliation $\fol$ defined on the $n$-dimensional polydisc $\D^n \subset \C^n$ around the origin
(cf. \cite{Brunella-notes}).

On the polydisc $\D^n$, we consider the
trivial fibration $\D^n = \D^{n-1} \times \D \rightarrow \D^{n-1}$. A meromorphic map $f :
\D^n \rightarrow M$ is said to be a {\it foliated meromorphic immersion}\, if the indeterminacy set $I (f)$ of $f$
intersects each vertical fiber of $\D^n$ in a discrete set and if $f$ satisfies the following additional conditions:
\begin{itemize}
\item $f$ is an immersion on the complement of $I (f)$.
	
\item In the complement of $I (f)$, $f$ takes vertical fibers to leaves of $\fol$.
\end{itemize}

Next, if $L$ is a leaf of $\fol$, a closed subset $K \subset L$ is said to be a {\it vanishing end}\, of $L$ if the
following conditions are satisfied:
\begin{itemize}
	
\item[(1)] $K$ is isomorphic to the punctured disc and the holonomy of the restriction of $\fol$ to $M \setminus
	{\rm Sing}\, (\fol)$ corresponding to the cycle $\partial K$ has finite order $k$.
	
\item[(2)] There is a foliated meromorphic immersion $f : \D^n \rightarrow M$ such that
	\begin{itemize}
		\item[(2a)] $I (f) \cap (\{ 0\} \times \D)$ is reduced to the origin of $\C^n$ (and where ``$\{ 0 \}$'' stands for the
		origin of $\D^{n-1} \subset \C^{n-1}$).
		
		\item[(2b)] The image of $f$ restricted to $\{ 0\} \times \D$ coincides with the interior of $K$.  Furthermore $f: \{ 0\}
		\times \D \rightarrow {\rm Int} \, (K)$ is a regular covering of degree $k$, where ${\rm Int} \, (K)$ stands for the
		interior of $K$.
	\end{itemize}
\end{itemize}

Let now $p$ be a point in $M \setminus {\rm Sing}\, (\fol)$ and denote by $L_p$ the leaf of $\fol$ through $p$.
Next, add to $L_p$ all of its vanishing ends
where the operation of adding an end to $L_p$ should be
understood in the sense of orbifolds: the multiplicity of the added point will be the order $k$ of the holonomy
relative to $\partial K$. This construction gives rise to a Riemann surface orbifold which, in turn, can be turned into
a Riemann surface by standard normalization.

\begin{defi}\label{defining_enlargedleaf}
The enlarged leaf $\widehat{L}_p$ of $\fol$ through $p$ is the Riemann surface obtained from $L_p$ after normalizing the
above constructed orbifold.
\end{defi}

Naturally there is little difference whether we consider the enlarged leaf $\widehat{L}_p$ as an orbifold or as a (normal)
Riemann surface. For reference, it is convenient to point out the following immediate consequence of Definition~\ref{defining_enlargedleaf}.

\begin{lemma}
\label{vanishingends_discreteset}
Given $p \in M \setminus {\rm Sing}\, (\fol)$, let $L_p$ and $\widehat{L}_p$ denote, respectively, the leaf and the enlarged
leaf of $\fol$ through $p$. Then the set $\widehat{L}_p \setminus L_p$ is a {\it discrete set of $\widehat{L}_p$}.\qed
\end{lemma}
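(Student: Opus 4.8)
The plan is to read the statement directly off the construction in Definition~\ref{defining_enlargedleaf}. Recall that $\widehat{L}_p$ is obtained from $L_p$ by adjoining one point $q_K$ for every vanishing end $K\subset L_p$ and then normalizing the resulting Riemann surface orbifold; in particular the set $\widehat{L}_p\setminus L_p$ consists precisely of the adjoined ``ideal'' points $q_K$, one for each vanishing end $K$ (nested ends, which determine the same end of $L_p$, contributing the same point). What needs to be shown is that each such $q_K$ is isolated in $\widehat{L}_p$, i.e.\ admits a neighborhood in $\widehat{L}_p$ meeting $\widehat{L}_p\setminus L_p$ only at $q_K$.

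First I would fix a vanishing end $K$ and invoke item~(2) of its definition: there is a foliated meromorphic immersion $f:\D^n\to M$ with $I(f)\cap(\{0\}\times\D)=\{0\}$ and with $f$ restricting to a regular covering $\{0\}\times\D\to{\rm Int}\,(K)$ of degree $k$, where $k$ is the order of the holonomy along $\partial K$. Passing to the orbifold normalization --- which near $q_K$ amounts precisely to taking the $k$-fold cover prescribed by the holonomy and filling in the center --- one obtains that a neighborhood $V_{q_K}$ of $q_K$ in $\widehat{L}_p$ is biholomorphic to the disc $\D$, with $q_K$ corresponding to $0\in\D$ and $V_{q_K}\setminus\{q_K\}$ mapping biholomorphically onto ${\rm Int}\,(K)\subset L_p$. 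Hence $V_{q_K}\setminus\{q_K\}\subset L_p$, so that $V_{q_K}\cap(\widehat{L}_p\setminus L_p)=\{q_K\}$.

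This already shows that every point of $\widehat{L}_p\setminus L_p$ is isolated in $\widehat{L}_p$, hence that $\widehat{L}_p\setminus L_p$ is a discrete subset of $\widehat{L}_p$. For completeness I would also remark that $L_p$ is open in $\widehat{L}_p$: for $x\in L_p$ any sufficiently small disc around $x$ inside the regular leaf $L_p$ can be taken disjoint from all the neighborhoods $V_{q_K}$ (it suffices to shrink the vanishing ends), so no adjoined point accumulates onto a point of $L_p$ either.

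Since the statement is essentially an unwinding of the definitions, I do not expect a genuine obstacle: the only point requiring a modicum of care is the identification of the local model of $\widehat{L}_p$ near an adjoined point $q_K$ as a disc, which rests on combining conditions~(2a)--(2b) with the effect of the orbifold normalization, together with the harmless identification of nested vanishing ends. Once that local picture is in place, discreteness follows immediately.
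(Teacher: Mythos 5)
Your argument is correct and is exactly the intended one: the paper states this lemma with no written proof, declaring it an immediate consequence of Definition~\ref{defining_enlargedleaf}, and your unwinding --- each adjoined point has a neighborhood in $\widehat{L}_p$ given by the normalized disc over ${\rm Int}\,(K)$, which meets $\widehat{L}_p\setminus L_p$ only at that point --- is precisely the verification being taken for granted. No discrepancy with the paper's approach.
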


The content of Lemma~\ref{blowup_invariance} below is another straightforward consequence of the preceding construction.
Let $M$, $\fol$, and  ${\rm Sing}\, (\fol)$ be as above. Next, let $C \subset M$ be a smooth submanifold contained in the singular
set ${\rm Sing}\, (\fol)$ of $\fol$ (in particular $C$ has codimension at least~$2$).
Consider the blow up $\pi : \widetilde{M} \rightarrow M$ of $M$ centered at $C \subset {\rm Sing}\, (\fol) \subset M$.
The {\it blow up} of $\fol$, i.e., the transform of $\fol$ under $\pi$, will be denoted by
$\tilf = \pi^{\ast} \fol$.

\begin{lemma}
\label{blowup_invariance}
With the preceding notation, let $\widetilde{L}$ be a local branch of a leaf of $\tilf$ not contained in the exceptional divisor $\pi^{-1} (C)$
(so that $L = \pi (\widetilde{L})$ is a local branch of a leaf of $\fol$).
Assume that $\widetilde{L}$ possesses a vanishing end $\widetilde{K}$ and denote by $\widetilde{P}$
the corresponding end of $\widetilde{L}$ (so that $\widetilde{P}$ belongs to the enlarged leaf of $\tilf$ obtained from $\widetilde{L}$).
If $\widetilde{P} \in \pi^{-1} (C)$ then the singular point $P = \pi (\widetilde{P})$ of $\fol$ belongs to the enlarged
leaf of $\fol$ obtained from $L$.
\end{lemma}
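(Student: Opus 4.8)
The plan is to transport the vanishing-end structure of $\widetilde{L}$ at $\widetilde{P}$ down to $M$ by means of the blow-down map $\pi$, exploiting that $\pi$ restricts to a biholomorphism $\pi_0 : \widetilde{M} \setminus \pi^{-1}(C) \to M \setminus C$ which conjugates $\tilf$ with $\fol$ over these open sets. First I would observe that $\widetilde{L}$ is disjoint from the exceptional divisor $\pi^{-1}(C)$: since $L = \pi(\widetilde{L})$ is (a local branch of) a leaf of $\fol$ it avoids $\mathrm{Sing}\,(\fol)$, which contains $C$, and hence $\widetilde{L}$ avoids $\pi^{-1}(C)$. Consequently the entire germ of $\tilf$ along $\widetilde{L}$ near $\widetilde{P}$ lies in $\widetilde{M} \setminus \pi^{-1}(C)$, where $\pi_0$ identifies it with the germ of $\fol$ along $L$ near $P$.

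Next I would set $K := \pi(\widetilde{K})$, viewed as a closed subset of the leaf $L$, and $\partial K := \pi(\partial \widetilde{K})$. Since $\widetilde{K}$ lies off $\pi^{-1}(C)$, where $\pi$ is injective, $\pi$ maps $\widetilde{K}$ biholomorphically onto $K$, so $K$ is isomorphic to the punctured disc; and because a small enough neighbourhood of the compact cycle $\partial \widetilde{K}$ avoids both $\pi^{-1}(C)$ and $\mathrm{Sing}\,(\tilf)$ while the corresponding neighbourhood of $\partial K$ avoids $\mathrm{Sing}\,(\fol)$, the conjugacy $\pi_0$ carries the holonomy of $\partial \widetilde{K}$ relative to $\tilf$ to the holonomy of $\partial K$ relative to $\fol$; in particular the latter has the same finite order $k$. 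This takes care of condition~(1) in the definition of vanishing end.

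To verify condition~(2), I would start from a foliated meromorphic immersion $\widetilde{f} : \D^n \to \widetilde{M}$ realizing $\widetilde{K}$ as a vanishing end of $\widetilde{L}$, with $\widetilde{f}(0,z) \to \widetilde{P}$ as $z \to 0$, chosen so that $\widetilde{f}(\D^n \setminus I(\widetilde{f}))$ is contained in $\widetilde{M} \setminus \pi^{-1}(C)$ — this can be arranged precisely because the vanishing-end data involves only the germ of $\tilf$ along $\widetilde{L}$, which is disjoint from the exceptional divisor, so that the vertical fibres of $\widetilde{f}$ may be taken to map to plaques avoiding $\pi^{-1}(C)$. I would then put $f := \pi \circ \widetilde{f} : \D^n \to M$. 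This $f$ is meromorphic and satisfies $I(f) \subset I(\widetilde{f})$, so $I(f)$ meets every vertical fibre of $\D^n$ in a discrete set, and $I(f) \cap (\{0\} \times \D)$ is empty or reduced to the origin of $\C^n$; moreover on $\D^n \setminus I(\widetilde{f})$ one has $f = \pi_0 \circ \widetilde{f}$, so that $f$ is an immersion taking vertical fibres to leaves of $\fol$, and $f$ restricted to $\{0\} \times \D$ is a regular covering of degree $k$ onto $\mathrm{Int}(K) = \pi(\mathrm{Int}(\widetilde{K}))$. It remains to see that the origin does belong to $I(f)$: otherwise $f$ would extend holomorphically across it with $f(0) = \lim_{z \to 0} f(0,z) = \pi(\widetilde{P}) = P$, and then the requirement that $f$ take vertical fibres to leaves off $I(f)$, applied to $\{0\} \times \D$, would place $P$ on a leaf of $\fol$ and hence in $M \setminus \mathrm{Sing}\,(\fol)$, contradicting $P \in C \subset \mathrm{Sing}\,(\fol)$. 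Hence $I(f) \cap (\{0\} \times \D)$ is reduced to the origin, $K$ is a vanishing end of $L$ with end point $P$, and therefore $P$ belongs to the enlarged leaf of $\fol$ obtained from $L$.

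The step I expect to require the most care is the one used at the beginning of the third paragraph: that the foliated meromorphic immersion witnessing $\widetilde{K}$ may be taken with image disjoint from $\pi^{-1}(C)$. This is exactly what ensures that $\pi$ acts on the whole local picture — leaf, boundary cycle, and foliated meromorphic immersion alike — as the conjugating biholomorphism $\pi_0$, instead of collapsing part of it into $C \subset \mathrm{Sing}\,(\fol)$; granting it, the rest is routine, the only mild subtlety being the verification that the origin cannot drop out of the indeterminacy set.
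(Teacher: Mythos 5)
Your proposal is correct and follows essentially the same route as the paper's proof: condition~(1) is transferred using that $\pi$ is a biholomorphism off $\pi^{-1}(C)$, and condition~(2) is obtained by taking $f = \pi \circ \widetilde{f}$ for the foliated meromorphic immersion $\widetilde{f}$ witnessing $\widetilde{K}$. The paper states this in two lines; your additional verifications (that $\widetilde{L}$ avoids the exceptional divisor, and that the origin genuinely lies in $I(f)$ because $P \in {\rm Sing}\,(\fol)$) are sound refinements of the same argument rather than a different approach.
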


\begin{proof}
Clearly $K = \pi (\widetilde{K})$ satisfies Condition~(1) in the definition of vanishing end
since $\pi$ is a diffeomorphism away from $\pi^{-1} (C)$. As to Condition~(2), if $\widetilde{f} : \D^n \rightarrow \widetilde{M}$
denotes the foliated meromorphic immersion associated to $\widetilde{K}$ then $f = \pi \circ \widetilde{f} : \D^n \rightarrow M$
provides the required meromorphic immersion for $K$ and $\fol$. The lemma follows.
\end{proof}

\begin{remark}
{\rm The converse of Lemma~\ref{blowup_invariance} also holds. In fact, note first that Condition~(1)
holds for $K$ if and only if it does for $\widetilde{K}$.
Next, given a foliated meromorphic immersion
$f : \D^n \rightarrow M$ corresponding to $K \subset L$, a foliated meromorphic immersion $F$ relative to
$\pi^{-1} (K)$ can be obtained by setting $F = \pi^{-1} \circ f$ on $\D^n \setminus f^{-1} (C)$ and observing that
$F$ extends meromorphically to all of $\D^n$ owing to Levi extension.}
\end{remark}

Let us close this section with some comments concerning Haefliger's point of view on the holonomy pseudogroup of a foliation
\cite{haefliger-II}, \cite{haefliger}
encoding all of the transverse dynamics of the foliation in question. This discussion will lead, in particular, to
Lemma~\ref{Leaveswithoutholonomy}. Albeit not fully indispensable, this lemma allows for
a more direct argument in the proof of Theorem~\ref{classification-foliation} (cf. Section~\ref{meromorphicsolutions_brunellatheorem}).
Another advantage of considering Haefliger's holonomy pseudogroup
is to make accurate the idea of ``transverse measures'' for foliations. These transverse measures and their basic properties,
mostly stemming from Fubini's theorem, are inevitable in the statements and in the proofs of our main results.
They will also help us to interpret Brunella's theorem in \cite{subharmonic-variation}
as a statement about regular foliations on suitable open manifolds, up to ignoring certain small sets that are
{\it negligible}\, from the point of view of classical potential theory.

Away from the singular set ${\rm Sing}\, (\fol)$ of $\fol$, we consider a {\it countable, locally finite}, covering $\{ U_i, \varphi_i ,
\Sigma_i \}_{i \in \N}$ of $M \setminus {\rm Sing}\, (\fol)$ by foliated coordinates. Here, we have $\varphi_i (U_i) = \D \times \Sigma_i
\subset \C^n$ equipped with coordinates $(x,y)$, $x \in \D \subset \C$ and $y \in \C^{n-1}$. In particular, each $\Sigma_i$ can be
identified with the unit ball in $\C^{n-1}$. Naturally, in these local coordinates, $\fol$ is represented by
the constant vector field $\partial /\partial x$. Next, if $U_i \cap U_j \neq \emptyset$, then the change of coordinates
$\varphi_j \circ \varphi_i^{-1} (x,y)$ takes on the form $(f_{ij} (x,y), \gamma_{ij} (y))$. Therefore, each local map
$\gamma_{ij}$ induces a diffeomorphism between open subsets of $\Sigma_i$ and $\Sigma_j$. The pseudogroup of maps between
open sets of the transverse space $\{ \Sigma_i\}$ generated by the collection $\{ \gamma_{ij} \}$ is called
the {\it Haefliger pseudogroup of $\fol$}, see \cite{haefliger-II}.

Now, by identifying $\Sigma_i$ with the unit ball ${\rm B}^{(n-1)}$ in $\C^{n-1}$, let each $\Sigma_i$ be equipped with the (finite) Lebesgue measure
$\mu_i$. In general, the resulting collection of spaces and measure $\{ (\Sigma_i, \mu_i)\}$ {\it is not} preserved by Haefliger
pseudogroup, i.e., in general we have $(\gamma_{ij})_{\ast} \mu_i \neq \mu_j$. However, the measures $(\gamma_{ij})_{\ast} \mu_i$
and $\mu_j$ clearly have the same null-measure sets. Thus, there is a sense to say that the Lebesgue measure of a set
invariant by $\fol$ is zero or strictly positive. Similarly, the Hausdorff dimension of sets is also invariant by all the local
diffeomorphisms $\gamma_{ij}$. In particular, if $A \subset {\rm B}^{(n-1)}$ has Hausdorff dimension strictly greater than
$2n-4$, then the same holds for all the sets $\gamma_{ij} (A) \subset {\rm B}^{(n-1)}$ (where the transverse sections
$\Sigma_i$ are identified with ${\rm B}^{(n-1)}$).

Let us close this section by proving a useful result (Lemma~\ref{Leaveswithoutholonomy}) on the set of leaves without holonomy that parallels a
well known theorem of Epstein-Millet-Tischler \cite{epsteinetal}. This statement also provides a standard application,
albeit in a slightly implicit form, of Haefliger's pseudogroup point of view.

\begin{lemma}
	\label{Leaveswithoutholonomy}
	Let $\fol$ be a singular holomorphic foliation on a complex K\"ahler manifold $M$. Then the set of leaves of $\fol$ carrying
	non-trivial holonomy has Hausdorff dimension at most $2n-2$. In particular, the Lebesgue measure of this set of leaves is zero.
\end{lemma}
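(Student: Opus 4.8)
The plan is to adapt to the present holomorphic setting the classical argument of Epstein--Millet--Tischler \cite{epsteinetal}, working entirely with Haefliger's pseudogroup; the decisive simplification over the smooth case is that the fixed-point set of a biholomorphism which is not locally the identity is a \emph{proper analytic subset}, hence of Hausdorff dimension two units below the real dimension of the ambient transverse section.

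Using the countable, locally finite covering $\{ U_i, \varphi_i, \Sigma_i\}_{i\in\N}$ of $M\setminus{\rm Sing}\,(\fol)$ and the generators $\gamma_{ij}$ of the associated Haefliger pseudogroup, I would assign to each finite admissible chain $w=(i_0,i_1,\dots,i_m)$ --- meaning $U_{i_k}\cap U_{i_{k+1}}\neq\emptyset$ for $0\le k<m$ and $i_m=i_0$ --- the holomorphic \emph{return map} $g_w=\gamma_{i_{m-1}i_m}\circ\cdots\circ\gamma_{i_0i_1}$, defined on the (possibly empty) open set $D_w\subset\Sigma_{i_0}$ where the composition is meaningful and valued in $\Sigma_{i_0}$. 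There are countably many admissible chains, and each $D_w$ has at most countably many connected components, so the collection of pairs $(g_w,V)$, with $V$ a connected component of some $D_w$, is countable.

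Next I would use the standard translation between holonomy and return maps. Suppose $p\in M\setminus{\rm Sing}\,(\fol)$ lies on a leaf $L$ with non-trivial holonomy; choose $i_0$ with $p\in U_{i_0}$ and let $y_0\in\Sigma_{i_0}$ be the transverse coordinate of $p$. The image of the holonomy representation $\pi_1(L,p)\to{\rm Germ}(\Sigma_{i_0},y_0)$ is generated by the germs at $y_0$ of the maps $g_w$ over admissible chains $w$, and it is non-trivial by hypothesis; hence there is a chain $w$ with $y_0\in D_w$, $g_w(y_0)=y_0$, and germ of $g_w$ at $y_0$ distinct from the identity. By the identity principle for holomorphic maps, $g_w$ is then not the identity on the connected component $V$ of $D_w$ containing $y_0$, so $\mathrm{Fix}(g_w|_V)=\{y\in V:g_w(y)=y\}$ is a proper analytic subset of the open set $V\subset\C^{n-1}$; in particular its Hausdorff dimension is at most $2n-4$. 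Therefore the trace on $\Sigma_{i_0}$ of the union of the leaves carrying non-trivial holonomy is contained in $\bigcup_{(g_w,V)}\mathrm{Fix}(g_w|_V)$, a countable union of sets of Hausdorff dimension $\le 2n-4$.

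By the countable-union property of Hausdorff dimension recalled in the background section, this trace has Hausdorff dimension at most $2n-4$; and since the $\Sigma_i$ are countable in number and the transition maps $\gamma_{ij}$ preserve Hausdorff dimension, the entire set of leaves of $\fol$ carrying non-trivial holonomy has Hausdorff dimension at most $2n-4$ --- a fortiori at most $2n-2$. Being strictly below $2n-2$, it is in particular contained in a set of zero $(2n-2)$-dimensional Lebesgue measure on the transverse space, which gives the last assertion. I expect the only delicate point to be the holonomy-to-pseudogroup dictionary, namely verifying that a leaf with non-trivial holonomy really does yield a chain $w$ whose return map $g_w$ is not locally the identity at $y_0$; once that is in place, the analytic input --- that the fixed locus of such a $g_w$ is a proper analytic set --- is elementary, and in fact the K\"ahler hypothesis does not enter this particular argument.
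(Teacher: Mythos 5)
Your proof is essentially correct and rests on the same analytic pivot as the paper's: the fixed locus of a non-identity holomorphic holonomy map is a proper analytic subset of the transversal, hence of Hausdorff dimension at most $2n-4$, and countable unions do not increase Hausdorff dimension. The route you take to organize the countable union is, however, genuinely different. The paper restricts $\fol$ to the compact pieces $\overline{V}_n$ of an exhaustion of $M\setminus{\rm Sing}\,(\fol)$, stratifies by the length $l$ of the loop carrying the holonomy, and invokes the Epstein--Millet--Tischler covering result to produce, for each $l$, \emph{finitely many} transverse sets $F_s\subset{\rm Fix}\,(h)$ saturating to $\mathcal{L}_l$; it then takes countable unions over $l$ and over the exhaustion. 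You instead enumerate directly the countably many return maps $g_w$ of the Haefliger pseudogroup (chains of charts from the countable, locally finite covering) together with the countably many connected components of their domains, and use the identity principle to pass from a non-identity germ at $y_0$ to a non-identity map on the whole component $V$. This is more self-contained --- it dispenses with the EMT structure theorem, the auxiliary metric, and the length stratification --- at the price of having to verify the holonomy-to-return-map dictionary, which you correctly identify as the one point needing care and which is indeed standard. Both arguments agree that the K\"ahler hypothesis plays no role here.

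One imprecision to correct: after showing that the transverse trace has Hausdorff dimension at most $2n-4$, you assert that ``the entire set of leaves carrying non-trivial holonomy has Hausdorff dimension at most $2n-4$.'' As a subset of $M$ this is false (already a single leaf has Hausdorff dimension $2$); saturating a transverse set of dimension $d$ by $\fol$ produces an invariant set of dimension at most $d+2$, as the paper records explicitly at the start of Section~3. The correct bound for the saturated set is therefore $2n-2$, which is exactly what the lemma claims and what your ``a fortiori'' clause lands on, so the conclusion is unaffected; but the intermediate statement should be replaced by the saturation estimate.
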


\begin{proof}
	Basically, the proof amounts to applying some specific results from \cite{epsteinetal} to the present context in which the
	foliations are holomorphic. Let us provide a detailed account of the argument. Consider
	$\fol$ as a regular holomorphic foliation on the open complex manifold $M \setminus {\rm Sing}\, (\fol)$.
	Let $L$ be a leaf of $\fol$ and denote by $\sigma :[0,1] \rightarrow L$ a loop in $L$. Unless
	the holonomy map $\rho_L (\sigma)$ obtained from $\sigma$ is the identity, $L$ is contained in the set of fixed points
	of $\rho_L (\sigma)$ and the latter is a (local) proper complex analytic subset of $\bigcup_{i \in \N} \Sigma_i$, with the transverse sections
	$\Sigma_i$ being pairwise disjoint. In particular, the Hausdorff dimension of this set is at most $2n-4$ while its (transverse) Lebesgue
	measure is zero.

	Now, note that on any compact subset $\overline{V}$ of $M \setminus {\rm Sing}\, (\fol)$, the natural restriction of Haefliger pseudogroup
	is finitely generated. Consider then the restriction $\fol_{\vert \overline{V}}$ of $\fol$ to $\overline{V}$ along with a suitable auxiliary
	metric on $M$. Since $\fol_{\vert \overline{V}}$ is a regular foliation on $\overline{V}$, the general
	results of \cite{epsteinetal} can be brought to bear. In particular, fixed $l \in \R$, let $\mathcal{L}_l$ denote
	the set of leaves of $\fol_{\vert \overline{V}}$ possessing a non-trivial holonomy map arising from a loop of length less than $l$.
	According to \cite{epsteinetal}, there are finitely many transverse sets $F_s \subset \{ \Sigma_i\}$, with
	each $F_s$ contained in a transverse section $\Sigma_{i (s)}$, such that the following hold:
	\begin{itemize}
		\item[(A)] The saturated set by $\fol$ of the sets $F_s$ contains $\mathcal{L}_l$.

		\item[(B)] For every $F_s$, there is an open set $U \subset \Sigma_{i (s)}$ containing $F_s$ and some holonomy map
		$h : U \subset \Sigma_{i (s)} \rightarrow \Sigma_{i (s)}$ satisfying the two conditions below:
		\begin{itemize}
			\item[(B.1)] The map $h$ is a diffeomorphism from $U$ to its image that does not coincide with the identity on $U$.
			
			\item[(B.2)] $F_s$ is contained in the set ${\rm Fix}\, (h)$ of fixed points of $h$.
	    \end{itemize}	
	\end{itemize}
Now, taking into account that $\fol_{\vert \overline{V}}$ is a holomorphic foliation, there follows that $h$ is a holomorphic diffeomorphism
and thus that ${\rm Fix}\, (h)$ is a proper analytic set of $U \subset \Sigma_{i (s)}$. In particular, the Hausdorff dimension
of $F_s$ is at most $2n-4$ so that the Hausdorff dimension of $\mathcal{L}_l$ cannot exceed $2n-2$. We also point out that
the complement of $\mathcal{L}_l$ is open and dense though this is not needed here (c.f., the general $G_{\delta}$-dense sets produced
in \cite{epsteinetal}).

Summarizing what precedes, fixed $l$, the set $\mathcal{L}_l$ consisting of leaves of $\fol_{\vert \overline{V}}$ containing a loop of length
at most $l$ that carries non-trivial holonomy has Hausdorff dimension bounded by $2n-2$. Now, by considering the countable union
of these sets of leaves as $l$ runs over the positive integers, we conclude that the Hausdorff dimension
of the set $\mathcal{L}_{\infty}$ consisting of all leaves of $\fol_{\vert \overline{V}}$ carrying non-trivial holonomy has
Hausdorff dimension at most $2n-2$.
The lemma now follows from considering an exhaustion $\{ \overline{V}_n\}$ of $M \setminus {\rm Sing}\, (\fol)$, applying the preceding
conclusion to each compact set $\overline{V}_n$ in this exhaustion, and then taking the (countable) union of the corresponding sets.
\end{proof}

\section{Blowing up foliations and dicritical behavior}\label{Dicriticalnessoffoliations}

The central result of this section is Proposition~\ref{dicritical_divisors} showing that, up to an invariant ``small'' set, the notion
of ``enlarged leaf'' is, in a sense, nothing but a standard leaf considered in a suitable birational model for the foliation in question.
The whole issue hinges from the behavior of certain foliations under sequences of blow ups.

Recall that a {\it separatrix $\mathcal{C}$}\, for a foliation $\fol$ at a singular point $p$ is a germ of irreducible analytic
curve passing through~$p$ and such that $\mathcal{C} \setminus \{ p\}$ is contained in a regular leaf of $\fol$. Along similar lines,
we will say that a leaf $L$ of $\fol$ {\it induces a separatrix for $\fol$ at $p \in {\rm Sing}\, (\fol)$}, if there is a separatrix $\mathcal{C}$ through
a point $p$ such that $\mathcal{C} \setminus \{ p\}$ is contained in $L$. Clearly, a same leaf $L$
can induce only {\it countably many}\, separatrices at singular points of $\fol$.
In particular, the following assertions are equivalent:
\begin{itemize}
	\item The set of leaves inducing separatrices for $\fol$ has strictly positive Lebesgue measure (resp. Hausdorff dimension
	strictly greater than $2n-2$).

	\item Given a neighborhood $U \subset M$ of ${\rm Sing}\, (\fol)$, the set of separatrices of $\fol$ in $U$
	has strictly positive Lebesgue measure (resp. Hausdorff dimension strictly greater than $2n-2$).
\end{itemize}

%
%
%
%

In the sequel, we will work under the (weaker) condition involving Hausdorff dimension as emphasized throughout the paper.
As a basic remark often used in the following discussion, we note that
saturating (by $\fol$) transverse analytic sets of Hausdorff dimension at most $2n-4$ leads to $\fol$-invariant sets with
Hausdorff dimension bounded by $2n-2$. The same applies to countable unions of the former sets.

Now, Proposition~\ref{addingendstoleaves} reads as follows:

\begin{prop}
\label{addingendstoleaves}
Let $U \subset M$ be a neighborhood of ${\rm Sing}\, (\fol)$ and assume that the set of separatrices of $\fol$ in $U$ has
Hausdorff dimension strictly greater than $2n-2$. Then
there is a set $\mathcal{L}_0 \subset U$ of Hausdorff dimension strictly greater
than $2n-2$, invariant under $\fol$, and such that every leaf $L$ contained in $\mathcal{L}_0$ satisfies the following condition:
for every local branch of $L$ defining a separatrix
at a singular point $p$, this point~$p$ has to be added to the branch in question in order to form the enlarged leaf
$\widehat{L}$ containing $L$.
\end{prop}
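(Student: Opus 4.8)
The plan is to localize the problem near $\mathrm{Sing}\,(\fol)$ and to show that, except for an invariant set of controlled Hausdorff dimension, any separatrix-inducing branch of a leaf $L$ is automatically \emph{not} realizable as the interior of a vanishing end with the singular point \emph{omitted}; equivalently, the singular point must be incorporated into $\widehat{L}$. Since the statement only concerns the local decision of whether $p$ is added to a branch at $p$, by Definition~\ref{defining_enlargedleaf} (and the locality emphasized there) it suffices to work on a polydisc $\D^n$ around each $p\in\mathrm{Sing}\,(\fol)$; cover a neighborhood $U$ of $\mathrm{Sing}\,(\fol)$ by countably many such polydiscs and treat each separately, then take the union. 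Because countable unions of sets of Hausdorff dimension $\le 2n-2$ still have dimension $\le 2n-2$ (as recalled in the background paragraph), it is enough to produce, on each polydisc, an exceptional set of dimension $\le 2n-2$ outside of which the desired conclusion holds, and then argue that the set $\mathcal{L}_0$ of the ``good'' leaves still has dimension strictly greater than $2n-2$.

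The key step is the following dichotomy for a local branch $\mathcal{C}\setminus\{p\}\subset L$ defining a separatrix at $p$. Either $p$ is added to that branch in forming $\widehat{L}$ — which is exactly the conclusion we want — or $p$ is \emph{not} added, which by the definition of enlarged leaf means the branch does \emph{not} arise from a vanishing end abutting at $p$. I would then show that the second alternative forces the branch to sit in a ``small'' invariant subset. Concretely, a separatrix at $p$ not added to its leaf must correspond to a local branch whose small loop around $p$ either has infinite-order holonomy, or — if it has finite-order holonomy — fails to be the interior of a vanishing end because there is no foliated meromorphic immersion $f:\D^n\to M$ with $I(f)\cap(\{0\}\times\D)=\{0\}$ realizing it. I would argue that in \emph{either} sub-case the set of such branches, when saturated by $\fol$, has Hausdorff dimension at most $2n-2$: for the infinite-holonomy case, this is because the relevant separatrices lie in the fixed-point set of a non-identity holonomy map, which is a proper analytic subset of a transverse section, hence of transverse Hausdorff dimension $\le 2n-4$, and saturating raises dimension by at most $2$ (as stated in the ``basic remark'' of Section~\ref{Dicriticalnessoffoliations} and in Lemma~\ref{Leaveswithoutholonomy}); for the finite-holonomy-but-no-immersion case, one uses the structure of $\fol$ near $p$ under reduction of singularities — after a sequence of blow ups one reaches a model where the transform of $L$ either becomes regular and transverse to the exceptional divisor (in which case, by Lemma~\ref{blowup_invariance} and its converse, the vanishing-end condition downstairs is equivalent to the obvious one upstairs, so $p$ \emph{is} added, contradicting the hypothesis), or the branch lands in an invariant component of exceptional divisor, a set which — together with its saturation — again has dimension $\le 2n-2$ after pulling back under the (proper, generically finite) blow-up maps.

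Assembling these pieces: start from the hypothesis that the set of separatrices of $\fol$ in $U$ has Hausdorff dimension strictly greater than $2n-2$. By the equivalence recalled just before the proposition, equivalently the set of \emph{leaves} inducing separatrices has Hausdorff dimension strictly greater than $2n-2$. Remove from it the (countably many) invariant exceptional sets produced above — one for each coordinate polydisc, each of dimension $\le 2n-2$, and those coming from the finitely-many-blow-ups analysis, likewise of dimension $\le 2n-2$. Since a countable union of sets of dimension $\le 2n-2$ still has dimension $\le 2n-2$, the remaining set $\mathcal{L}_0$ still has Hausdorff dimension strictly greater than $2n-2$, is invariant under $\fol$, and by construction every leaf $L\subset\mathcal{L}_0$ has the property that each of its separatrix-inducing local branches forces the corresponding singular point to be added in forming $\widehat{L}$. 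This is precisely the assertion of Proposition~\ref{addingendstoleaves}.

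\textbf{Main obstacle.} The delicate point is the finite-holonomy case: ruling out, outside a small set, the possibility that a finite-holonomy separatrix branch nonetheless fails to come from a vanishing end because no foliated meromorphic immersion $f:\D^n\to M$ exists with the required indeterminacy behavior. This is genuinely a statement about the local structure of $\fol$ at $p$ and seems to require the reduction-of-singularities machinery (McQuillan--Panazzolo in dimension $3$, and more classically in the surface sections) to pass to a model where the branch either is regular and transverse to the exceptional divisor — where the immersion is built by hand and Lemma~\ref{blowup_invariance} transfers it back down — or is swallowed by an invariant exceptional component, which is then controlled dimensionally. Handling the bookkeeping so that only a Hausdorff-$(2n-2)$ set of leaves is sacrificed across all the blow ups, while keeping the argument uniform over the singular locus, is where the real work lies; everything else is the routine dimension-counting already set up in Section~\ref{Dicriticalnessoffoliations}.
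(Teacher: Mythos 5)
Your outer scaffolding matches the paper's: the reduction of the statement to showing that, away from an invariant set of Hausdorff dimension at most $2n-2$, every separatrix-inducing branch admits the foliated meromorphic immersion required by the definition of vanishing end; the use of Lemma~\ref{Leaveswithoutholonomy} to discard the leaves with non-trivial holonomy; and the countable-union bookkeeping (including the point that a single leaf induces only countably many separatrices, which is how the paper passes from the separatrix-level statement of Proposition~\ref{dicritical_divisors} to the leaf-level statement of Proposition~\ref{addingendstoleaves}). However, the step you yourself flag as the main obstacle is not actually carried out, and the mechanism you sketch for it would fail. First, you invoke reduction of singularities (McQuillan--Panazzolo), which is only available in ambient dimension at most~$3$, while the proposition is stated and used for arbitrary $n$; the paper deliberately avoids resolution and instead runs a bespoke induction on sequences of blow ups whose centers $Z_j$ are irreducible Zariski-closed subsets of ${\rm Sing}\,(\fol_j)$ selected by Hausdorff-dimension minimality (Lemma~\ref{Lemma_transformingseparatrices}).

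Second, and more seriously, your claim that a branch which ``lands in an invariant component of the exceptional divisor'' lies in a set which ``together with its saturation has dimension $\le 2n-2$'' conflates the set of landing \emph{points} (which is indeed small, of dimension $\le 2n-4$ inside an invariant divisor) with the set of \emph{separatrices} landing there. A single singular point of the transformed foliation can attract an uncountable, positive-dimensional family of separatrices --- this is exactly the dicritical phenomenon --- and such a family is not the saturation of a transverse section at a regular point, so no count of the form ``transverse dimension plus two'' applies to it. Ruling this out for \emph{every} sequence of blow ups is the entire content of the paper's Propositions~\ref{dicritical_divisors} and~\ref{PlayingBlowup_Proposition}: in the case of point centers one normalizes the Puiseux exponents (countably many possibilities) and derives a contradiction from infinite contact between uncountably many smooth separatrices; in the case of positive-dimensional centers one needs the multiplicity of the foliation along a separatrix, the order of $\fol_j$ with respect to $Z_j$, a genericity argument on the point $p\in Z_j$, a Fubini argument bounding the dimension of $\bigcup_{p}{\rm Sep}_B^{(p)}$, and Lemma~\ref{onemorelemma_rightbelow}, whose proof rests on the observation that a dicritical restriction of the linear part $\mathcal{Y}^{(1)}$ to the cone $\mathcal{A}$ would force it to be a constant multiple of the radial vector field, contradicting its vanishing on $Z_j$. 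None of this appears, even in outline, in your proposal, so the argument as written has a genuine gap at its central step.
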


Proposition~\ref{addingendstoleaves} is a consequence of the following local result:

\begin{prop}\label{dicritical_divisors}
Let $U \subset M$ be a neighborhood of ${\rm Sing}\, (\fol)$ and assume that the set ${\rm Sep}$ of separatrices of $\fol$ in $U$ has
Hausdorff dimension $\kappa > 2n-2$. Then, there is a set ${\rm Sep}_{\rm good} \subset {\rm Sep}$ such that the following holds:
\begin{itemize}
	\item[(1)] The Hausdorff dimension of ${\rm Sep}_{\rm bad} = {\rm Sep} \setminus {\rm Sep}_{\rm good}$ is at most~$2n-2$.

	\item[(2)] For every separatrix $\mathcal{C} \subset {\rm Sep}_{\rm good}$ at a point $p \in {\rm Sing}\, (\fol)$, the construction
	of the enlarged leaf containing $\mathcal{C}$ requires adding the point $p$ to $\mathcal{C}$.
\end{itemize}	
\end{prop}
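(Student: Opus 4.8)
The strategy is to analyze, separatrix by separatrix, the dichotomy in the definition of vanishing end: a local branch $\mathcal{C}$ of a leaf through a singular point $p$ either forces $p$ into the enlarged leaf (the "good" case, which is what we want), or it does not, in which case the end modeled on $\mathcal{C} \setminus \{p\}$ is a genuine puncture admitting a foliated meromorphic immersion $f : \D^n \to M$ with $I(f) \cap (\{0\} \times \D)$ reduced to the origin and $f(\{0\}\times\D) = {\rm Int}(K)$. The key observation is that this second, "bad", alternative is highly non-generic among separatrices: it forces a strong structural constraint on the foliation near $p$, roughly that $p$ is a dicritical singularity through which a positive-measure family of separatrices passes in an organized, equisingular way. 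The plan is to show that the set ${\rm Sep}_{\rm bad}$ of separatrices for which this happens has Hausdorff dimension at most $2n-2$, and then set ${\rm Sep}_{\rm good} = {\rm Sep} \setminus {\rm Sep}_{\rm bad}$, so that item~(1) holds by construction and item~(2) holds by definition of ${\rm Sep}_{\rm bad}$.

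To implement this, I would first reduce to a local model: fix a singular point $p \in {\rm Sing}\,(\fol)$, work on a polydisc $\D^n$ around $p$, and consider the (at most countably many, by the remark preceding the proposition combined with the fact that ${\rm Sing}\,(\fol)$ is a countable union of such local pieces — actually ${\rm Sing}\,(\fol)$ need not be discrete, so one covers it by countably many coordinate polydiscs and works in each) separatrices through $p$ that lie in ${\rm Sep}$. Next, apply the reduction of singularities (a finite composition of blow ups $\pi : \widetilde{M} \to M$ over $p$) so that the transformed foliation $\tilf = \pi^{\ast}\fol$ has only elementary singularities along the exceptional divisor $\pi^{-1}(p)$. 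By Lemma~\ref{blowup_invariance} and the remark following it, the enlarged-leaf construction is compatible with $\pi$: a branch $\mathcal{C}$ through $p$ forces $p$ to be added if and only if its strict transform $\widetilde{\mathcal{C}}$ forces the corresponding point $\widetilde{p} \in \pi^{-1}(p)$ to be added. So the question is transported to the behavior of strict transforms near elementary singularities (and near regular points) of $\tilf$ on the exceptional divisor. There, the separatrix $\widetilde{\mathcal{C}}$ is transverse to $\pi^{-1}(p)$ at a regular point of $\tilf$, or it is one of the (finitely many) local separatrices at an elementary singular point; in the first situation the holonomy around the corresponding loop is trivial or finite and the point genuinely need not be added only when the component of $\pi^{-1}(p)$ hit is $\tilf$-invariant with the right holonomy — but the set of separatrices landing at regular points of $\tilf$ on a given component of $\pi^{-1}(p)$ is, after pushing forward, a set whose transverse trace has Hausdorff dimension at most that of the component, i.e.\ at most $2n-4$ in the transverse direction, hence its $\fol$-saturation has Hausdorff dimension at most $2n-2$. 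The separatrices landing at the finitely many elementary singular points of $\tilf$ on $\pi^{-1}(p)$ likewise contribute a set whose transverse trace is contained in finitely many local analytic curves (the separatrices of the elementary singularities), again of transverse Hausdorff dimension $\le 2n-4$, hence saturating to dimension $\le 2n-2$. All the separatrices through $p$ not accounted for this way — i.e.\ those whose strict transform meets $\pi^{-1}(p)$ at a point where $\tilf$ is transverse to a non-invariant ("dicritical") component and the loop has trivial holonomy — are precisely the ones for which, conversely, $\widetilde{p}$ (hence $p$) is \emph{not} forced to be added; but I claim these actually do force addition after one more careful look, because a dicritical component carrying a positive-dimensional family of separatrices cannot supply the foliated meromorphic immersion $f$ with $I(f)$ meeting the vertical fiber only at the origin unless the picture degenerates on a set of separatrices of Hausdorff dimension $\le 2n-2$. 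Collecting these finitely many exceptional pieces (finitely many components of $\pi^{-1}(p)$, finitely many elementary singularities on it) and then taking the countable union over the countably many coordinate polydiscs covering ${\rm Sing}\,(\fol)$, one gets ${\rm Sep}_{\rm bad}$ of Hausdorff dimension $\le 2n-2$, using the stability of this bound under countable unions stated in the introduction.

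The main obstacle is the last point: cleanly characterizing when the existence of the foliated meromorphic immersion $f$ of the vanishing-end definition genuinely fails, and showing that the separatrices for which it does \emph{not} fail — yet which still, naively, look like they should not force $p$ to be added — form a set of controlled Hausdorff dimension. Concretely, the delicate case is a dicritical elementary singularity (or a chain of blow ups producing a dicritical component) through which an uncountable, positive-Hausdorff-dimensional family of separatrices passes: one must argue that for all but a $(2n-2)$-dimensional set of these, the holonomy obstruction or an immersion obstruction forces $p \in \widehat{L}$. I expect this to be handled by combining Lemma~\ref{Leaveswithoutholonomy} (to discard leaves with non-trivial holonomy, which contribute only Hausdorff dimension $\le 2n-2$) with an explicit local analysis of the dicritical model — in suitable coordinates a dicritical component after reduction carries a foliation all of whose relevant leaves accumulate on the exceptional divisor in a way incompatible with the "$I(f)$ meets the fiber only at the origin" requirement, except along the finitely many invariant separatrices of the remaining singularities. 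Once that local statement is in hand, the global Hausdorff-dimension bookkeeping is routine given the tools already set up in Section~\ref{Preliminaries}.
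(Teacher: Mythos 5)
There is a genuine gap, and it sits exactly where you flag your ``main obstacle''. First, your plan hinges on resolving the singularities of $\fol$ over each $p\in{\rm Sing}\,(\fol)$ into elementary ones. Reduction of singularities for one-dimensional foliations is available in ambient dimension~$2$ (Seidenberg) and~$3$ (McQuillan--Panazzolo), but Proposition~\ref{dicritical_divisors} is stated for arbitrary~$n$, where no such theorem exists. The paper's proof deliberately avoids this: it only blows up smooth (Hironaka-resolved) centers $Z_j$ contained in ${\rm Sing}\,(\fol_j)$, chosen minimal among Zariski-closed sets supporting a $\kappa$-dimensional family of separatrices, and never needs the final singularities to be elementary. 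Second, your case analysis is inverted at the decisive point. When the transform of a separatrix lands at a \emph{regular} point $q$ of the transformed foliation on a \emph{non-invariant} exceptional component, foliated coordinates at $q$ composed with the blow-down furnish precisely the foliated meromorphic immersion required by the vanishing-end definition (this is the content of Lemma~\ref{Lemma_transformingseparatrices} and Proposition~\ref{PlayingBlowup_Proposition}): this is the \emph{good} case, forcing $p$ into the enlarged leaf. You first declare these separatrices to be the ones for which $p$ is \emph{not} forced in, and then reverse yourself with a justification (``a dicritical component \dots cannot supply the foliated meromorphic immersion'') that asserts the opposite of what actually happens. Relatedly, your dimension count for separatrices landing at regular points of a non-invariant component is wrong: the landing points there form an \emph{open} subset of the component, of real dimension $2n-2$, so the saturation has real dimension $2n$, not $\le 2n-2$; this is harmless only because those separatrices are good, not bad.

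What your proposal leaves unproved is the heart of the matter: that a set of separatrices of Hausdorff dimension $\kappa>2n-2$ cannot keep landing, under \emph{every} admissible sequence of blow ups, at singular points of the successive transforms. The paper handles this by (i) partitioning the separatrices into countably many classes according to the exponents $(l_1,\dots,l_n)$ of their Puiseux parameterizations, so that one class retains dimension $\kappa$ and all its members become smooth after the same finite sequence of blow ups; (ii) observing that smooth separatrices landing at the same point after every further blow up have infinite contact and hence coincide, contradicting $\kappa>2n-2\ge 2$; and (iii) for positive-dimensional centers $Z_j$, a Fubini argument in the normal directions together with Lemma~\ref{onemorelemma_rightbelow} (multiplicity of $\fol_j$ along a separatrix, order with respect to the center, and a dicriticalness argument on the linear part $\mathcal{Y}^{(1)}$) to deal with separatrices that cannot be pulled apart by blow ups centered at the $Z_j$. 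None of this is replaceable by the local analysis of elementary singularities you sketch; and Lemma~\ref{Leaveswithoutholonomy}, which you do correctly invoke, is used in the paper only to secure condition~(1) of the vanishing-end definition, not to produce the immersion.
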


\begin{proof}[Proof of Proposition~\ref{addingendstoleaves} after Proposition~\ref{dicritical_divisors}]
	Consider the saturated set of ${\rm Sep}_{\rm bad}$ by $\fol$. This
	saturated set is well defined since, bar the singular points themselves, every separatrix is contained in a unique leaf of $\fol$.
	However, since a same leaf of $\fol$ can induce several different separatrices of $\fol$ (at singular points that may or may not be different),
	the saturated set in question may contain separatrices lying in ${\rm Sep}_{\rm good}$. However, erasing all these
	separatrices from ${\rm Sep}$ amounts to erasing at most countably many copies of the initial set
	${\rm Sep}_{\rm bad}$. Therefore, the set of eliminated separatrices still has Hausdorff dimension bounded by $2n-2$.
	Thus there exists
	a subset ${\rm Sep}_{\rm exc} \subset {\rm Sep}_{\rm good}$ satisfying the following additional condition: whenever a leaf $L$ of $\fol$
	defines a separatrix lying in ${\rm Sep}_{\rm exc}$, then every other separatrix induced by the same leaf $L$ also lies in
	${\rm Sep}_{\rm exc}$. Furthermore, the Hausdorff dimension of ${\rm Sep}_{\rm exc}$ is still equal to~$\kappa > 2n-2$. Hence,
	the saturated by $\fol$ of ${\rm Sep}_{\rm exc}$ yields the desired invariant set and this completes the proof
	of Proposition~\ref{addingendstoleaves}.
\end{proof}

In view of the preceding, it suffices to prove Proposition~\ref{dicritical_divisors}. Let then $U$ and ${\rm Sep}$
be as in this proposition. Some specific terminology will be useful in the sequel.
Let $D$ denote a codimension~$1$ subvariety of $M$. A
local branch of a leaf $L$ of $\fol$ will be said to {\it land at a point $p \in D$} if the conditions below are satisfied:
\begin{itemize}
	\item The leaf $L$ is not contained in $D$.
	
	\item There exists an irreducible germ of analytic curve passing through~$p$ and contained in the branch in question.
\end{itemize}
Note that the point $p$ need not be singular for the foliation $\fol$, in which case the second item above would be equivalent
to saying that the branch in question induces a separatrix for $\fol$ at~$p$.
Similarly, we will say
that $p \in D$ is a {\it landing point}\, for $\fol$ if there is some leaf $L$ of $\fol$ landing at $p$. Note that the terminology of
``landing points'' only makes sense once a codimension~$1$ subvariety of $M$ is fixed.

Let us begin with a couple of elementary observations involving $\fol$ and codimension~$1$ subvarieties of $M$.
Assume that $D$ is an {\it irreducible}\, codimension~$1$ subvariety of $M$. Then, we have:
\begin{itemize}
	\item[(a)] If $D$ is not invariant by $\fol$ then, away from a proper analytic subset of $D$, the leaves of $\fol$ are
	transverse to $D$. In particular the set of landing points of $\fol$ lying in $D$ contains an open set
	$W \subset D$ having full ($(2n-2)$-dimensional) Lebesgue measure on $D$.

	\item[(b)] If $D$ is invariant by $\fol$ then the set of landing points in $D$ is contained in $D \cap {\rm Sing}\, (\fol)$
	which is a proper analytic subset of $D$. In particular, the subset of $D$ consisting of landing points of $\fol$ has Hausdorff dimension
	bounded by $2n-4$.
\end{itemize}

The proof of Proposition~\ref{dicritical_divisors} can further be reduced to the following proposition:

\begin{prop}\label{PlayingBlowup_Proposition}
	Every set contained in ${\rm Sep}$ and having Hausdorff dimension~$\kappa > 2n-2$, contains separatrices for which there
	exist foliated meromorphic immersions $f : \D^n \rightarrow M$ satisfying the following conditions:
	\begin{itemize}
		\item[(C.1)] $I (f) \cap (\{ 0\} \times \D)$ is reduced to the origin of $\C^n$.
		
		\item[(C.2)] The restriction of $f$ to $\{ 0\} \times \D$ is injective and $f (\{ 0\} \times \D)$ contains a relatively compact
		neighborhood of the singular point in the separatrix in question.
	\end{itemize}
\end{prop}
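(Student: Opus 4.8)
\emph{Overall approach.} The plan is to prove Proposition~\ref{PlayingBlowup_Proposition} by an inductive blow-up procedure. At each stage one blows up the singular set and splits the members of a prescribed ``large'' sub-family of ${\rm Sep}$ into two classes: those whose strict transforms land at a \emph{regular} point of the transformed foliation, for which the required foliated meromorphic immersion is essentially the blow-down map read in a trivialising chart; and those whose strict transforms land on the new singular set, for which one iterates. The Hausdorff-dimension hypothesis is what guarantees that the first class is non-empty at each step, and the iteration has to be shown to stop.

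\emph{Reduction.} First I would localise: since ${\rm Sing}\,(\fol)$ has locally finitely many irreducible components and a leaf induces only countably many separatrices, countable subadditivity of Hausdorff dimension shows that any subset of ${\rm Sep}$ of dimension $>2n-2$ contains such a subset $A$ all of whose members are separatrices based at points of a single \emph{smooth} irreducible $Z\subseteq{\rm Sing}\,(\fol)$ --- separatrices based at ${\rm Sing}\,(Z)$, of smaller dimension, being treated at an earlier stage of an induction on $\dim Z$. Let $\pi:\widetilde M\to M$ be the blow-up with centre $Z$, let $E=\pi^{-1}(Z)$, and let $\tilf=\pi^{\ast}\fol$; since $\pi$ is biholomorphic off $E$, the strict transforms of the members of $A$ form a family $\widetilde A$ of separatrices of $\tilf$ of the same Hausdorff dimension, each landing at a point of $E$.

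\emph{The regular case.} Suppose some $\widetilde{\mathcal C}\in\widetilde A$ lands at a point $\widetilde p\in E$ at which $\tilf$ is regular. Then near $\widetilde p$ the germ $\widetilde{\mathcal C}$ coincides with the leaf of $\tilf$ through $\widetilde p$, which is smooth and --- being an irreducible germ with $\pi(\widetilde{\mathcal C})=\mathcal C$ a separatrix, hence $\mathcal C\not\subseteq{\rm Sing}\,(\fol)\supseteq\pi(\mathcal E)$, where $\mathcal E$ denotes the total exceptional locus of the blow-up tower --- not contained in $\mathcal E$. I would choose coordinates $(y,x)\in\D^{n-1}\times\D$ at $\widetilde p$ trivialising $\tilf$, with the leaf through $\widetilde p$ being the central fibre $\{0\}\times\D$, let $\widetilde f:\D^n\to\widetilde M$ be a shrinking of this chart, and set $f:=\pi\circ\widetilde f$. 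Then $f$ takes vertical fibres to leaves of $\fol$ and is an immersion off $\widetilde f^{-1}(\mathcal E)$; moreover $\widetilde f^{-1}(\mathcal E)$ is a proper analytic subset of $\D^n$ not containing the central fibre, so after shrinking $\D^n$ it meets every vertical fibre in a discrete set. Hence, with $I(f):=\widetilde f^{-1}(\mathcal E)$, $f$ is a foliated meromorphic immersion, and since $\widetilde{\mathcal C}\cap\mathcal E$ is discrete one may arrange $\widetilde{\mathcal C}\cap\mathcal E=\{\widetilde p\}$, so that $I(f)\cap(\{0\}\times\D)$ is reduced to the origin --- condition~(C.1). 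Finally $\pi$ is injective on $\widetilde{\mathcal C}$ (injective off $\mathcal E$, and $\mathcal E\cap\widetilde{\mathcal C}=\{\widetilde p\}$), so $f$ restricted to $\{0\}\times\D$ is injective and its image is a relatively compact neighbourhood in $\mathcal C$ of the singular point $p:=\pi(\widetilde p)$ --- condition~(C.2). Thus $\mathcal C$ is a separatrix of the required kind.

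\emph{The singular case; termination --- the main obstacle.} It remains to exclude the alternative, that every member of $\widetilde A$ lands at a point of ${\rm Sing}\,(\tilf)\cap E$; in that case $\widetilde A$ is again a family of separatrices of Hausdorff dimension $>2n-2$ based at the singular set of $\tilf$, and one is back in the same situation with $\fol$ replaced by $\tilf$, so one iterates. I expect making this iteration terminate to be the hard part, and I would proceed as follows. Embedded resolution of curve singularities being a finite process, after finitely many steps --- retaining a large sub-family of bounded curve-complexity by countable subadditivity and following one common tower of blow-ups at each stage --- one may assume that all separatrices in the current family are smooth and transverse to the successive exceptional divisors (this parallels, in the present generality, the fact that an elementary singular point carries only finitely many separatrices). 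For such a family based at a smooth centre $Z$ and transverse to $Z$ (the tangent sub-family being smaller and treated at an earlier inductive stage), the assignment $\mathcal C\mapsto(\mathcal C\cap Z,\nu_{\mathcal C})$ --- landing point and normal direction --- has image $\Lambda\subseteq E=\mathbb P(N_Z)$; if the points $\mathcal C\cap Z$ are not confined to a proper analytic subset of $Z$, then --- a leaf carrying only countably many such separatrices --- a Hausdorff-dimension count forces $\dim_H\Lambda>2n-4=\dim_{\mathbb R}E-2$, and since ${\rm Sing}\,(\tilf)\cap E$ is a proper analytic, hence at most $(2n-4)$-dimensional, subset of $E$, some member of the family must, after one further blow-up, land at a regular point of the transformed foliation --- i.e.\ the regular case occurs. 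If instead the landing points fill only a proper $Z'\subsetneq Z$, one replaces $Z$ by $Z'$ and concludes by the induction on $\dim Z$. This last Hausdorff-dimension estimate, together with the bookkeeping needed to keep the sub-families large throughout the iteration, is the point I expect to require the most care.
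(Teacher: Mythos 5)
Your overall skeleton matches the paper's: blow up along centers contained in the singular set, observe that any separatrix whose transform lands at a \emph{regular} point of the transformed foliation yields the required immersion by composing a foliated chart with the blow-down (this is exactly the paper's Lemma~\ref{Lemma_transformingseparatrices}), and then argue that the alternative --- every transform landing on the new singular locus forever --- is incompatible with the Hausdorff-dimension hypothesis. The gap is in the termination step, and it is precisely the step the paper identifies as the crux. First, the implication you invoke --- that the union of the separatrices having Hausdorff dimension $>2n-2$ forces the set $\Lambda\subseteq\mathbb{P}(N_Z)$ of (landing point, normal direction) pairs to have Hausdorff dimension $>2n-4$ --- is unjustified and false as stated: the fibers of $\mathcal{C}\mapsto(\mathcal{C}\cap Z,\nu_{\mathcal C})$ need not be countable, and already for $n=2$, $Z=\{0\}$, an uncountable family of smooth curves all tangent to the $x$-axis has $\Lambda$ a single point while its union can have full measure. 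A bound of the form $\dim_H(\bigcup_\lambda C_\lambda)\le\dim_H\Lambda+2$ does not hold for general unions of curves; the paper avoids this by showing the transverse parameterizing sets are \emph{analytic} of complex dimension at most $n-m-2$ and then applying Fubini to get locally finite $d$-dimensional mass with $d\le 2n-2$.

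Second, and more fundamentally, your reduction to ``smooth and transverse after finitely many steps'' does not confront the separatrices that can \emph{never} be pulled apart by blow-ups centered along the $Z_j$'s: when $\dim Z_j=m>0$, two separatrices landing at the same point whose Puiseux parameterizations agree in all $n-m$ directions normal to $Z_j$ but differ in the tangential coordinates $\varphi_1,\ldots,\varphi_m$ land at the same point of every subsequent exceptional divisor, so no iteration of your dichotomy separates them and no contact/infinite-tangency argument applies to them. The paper devotes Lemma~\ref{onemorelemma_rightbelow} to exactly this: it shows that, for a generic base point and generic separatrix, only countably many others are inseparable from it, and the proof is genuinely foliation-theoretic --- it uses the multiplicity of $\fol_j$ along the separatrix and the order of $\fol_j$ with respect to the center to reduce to a degree-one homogeneous part $\mathcal{Y}^{(1)}$, and then derives a contradiction from the fact that an uncountable inseparable family would force $\mathcal{Y}^{(1)}$ to be dicritical, hence a constant multiple of the radial vector field, contradicting its vanishing along $Z_j$. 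Nothing in your proposal substitutes for this lemma, and without it (or an equivalent control on the fibers of your map to $\Lambda$) the dimension count cannot be run. The point-center case, where contact genuinely drops at each blow-up and infinite contact forces coincidence, is the only case your argument fully covers.
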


\begin{proof}[Proof of Proposition~\ref{dicritical_divisors} taking for granted Proposition~\ref{PlayingBlowup_Proposition}]
Let us assume for a contradiction that Proposition~\ref{dicritical_divisors} does not hold. Naturally we define
the set ${\rm Sep}_{\rm good} \subset {\rm Sep}$ as consisting of those separatrices whose landing points belong to the corresponding
enlarged leaf. In principle, ${\rm Sep}_{\rm good}$ may be empty. Now, we have ${\rm Sep}_{\rm bad} = {\rm Sep} \setminus {\rm Sep}_{\rm good}$
so that the Hausdorff dimension of ${\rm Sep}_{\rm bad}$ must be strictly greater than $2n-2$ since Proposition~\ref{dicritical_divisors} does not hold.

Next, owing to Lemma~\ref{Leaveswithoutholonomy}, up to deleting
a set of Hausdorff dimension at most~$2n-2$ from ${\rm Sep}_{\rm bad}$, all the remaining separatrices have trivial holonomy.
Summarizing, if Proposition~\ref{dicritical_divisors} does not hold, then there is a set ${\rm Sep}_{\rm bad'} \subset {\rm Sep}$ whose Hausdorff
dimension is $\kappa > 2n-2$ consisting of separatrices $\mathcal{C}$ satisfying the following conditions:
\begin{itemize}
	\item[(a)] If $\mathcal{C} \in {\rm Sep}_{\rm bad'}$, then the holonomy of $\mathcal{C}$ is trivial.
	
	\item[(b)] If $\mathcal{C} \in {\rm Sep}_{\rm bad'}$ is a separatrix at a singular point $p$, then $p$ does not belong to the enlarged leaf
	of $\fol$ containing $\mathcal{C}$.
\end{itemize}

To obtain the desired contradiction, we proceed as follows. According to Proposition~\ref{PlayingBlowup_Proposition},
there is a separatrix $\mathcal{C} \in {\rm Sep}_{\rm bad'}$ for which there exists 
a foliated meromorphic immersion $f : \D^n \rightarrow M$ satisfying both
conditions (C.1) and (C.2). Therefore $K = \overline{f (\{ 0\} \times \D)}$
is a vanishing end for the leaf of $\fol$ associated with $\mathcal{C}$,
see Section~\ref{Preliminaries}. Owing to Definition~\ref{defining_enlargedleaf}, the local branch corresponding to $\mathcal{C}$
of the enlarged leaf of $\fol$ containing $L$ must contain the singular point~$p$ as well. This contradicts the fact that
$\mathcal{C} \in {\rm Sep}_{\rm bad'}$ and establishes Proposition~\ref{dicritical_divisors}.
\end{proof}

In the remainder of this section, we will
focus on the proof of Proposition~\ref{PlayingBlowup_Proposition}. Let $U$ and ${\rm Sep}$ be as above. Consider a set
${\rm Sep}_{B} \subset {\rm Sep}$
having Hausdorff dimension equal to~$\kappa > 2n-2$ (strictly). We need to show
the existence of separatrices $\mathcal{C}$ in ${\rm Sep}_{B}$ along with foliated meromorphic immersions $f : \D^n \rightarrow M$
satisfying conditions~(C.1) and~(C.2) and such that $f (\{ 0\} \times \D) \subset \mathcal{C}$ is a relatively compact
neighborhood of the corresponding singular point in $\mathcal{C}$.

In what follows, all performed blow ups have smooth centers contained in the singular set of the foliation in question. 
Since ${\rm Sep}_{B}$ consists of separatrices of $\fol$, every element in ${\rm Sep}_{B}$, i.e., every germ of analytic curve
in ${\rm Sep}_{B}$, is naturally associated with a unique singular point of $\fol$, namely the point at which the corresponding
(germ of) analytic curve induces a separatrix for $\fol$.
Next, let $Z$ be an irreducible Zariski-closed subset of ${\rm Sing}\, (\fol)$ satisfying the
two conditions below:
\begin{itemize}
	\item[(i)] The subset of ${\rm Sep}_{B}$ consisting of separatrices associated with singular
	points lying in $Z$ has Hausdorff dimension $\kappa > 2n-2$.
	
	\item[(ii)] For every proper Zariski-closed subset $Z'$ of $Z$, the set of separatrices in ${\rm Sep}_{B}$ associated with
	singular points lyting in $Z'$ has Hausdorff dimension at most $2n-2$.
\end{itemize}
Since ${\rm Sing}\, (\fol) \subset M$ is a proper analytic subset (of codimension~$\geq 2$), the existence of a Zariski-closed set
$Z \subset {\rm Sing}\, (\fol)$ as indicated is clear. Without loss of generality (Hironaka resolution), we can assume that $Z$ is smooth. Moreover,
since $Z \subset {\rm Sing}\, (\fol)$, the codimension of $Z$ is at least~$2$ so that it can be used as a blow up center. To abridge notation,
in the sequel we assume without loss of generality that all separatrices in ${\rm Sep}_{B}$ are associated with points in $Z$.

Now, consider the blow up $M \stackrel{\Pi_{(1)}}{\longleftarrow} M_1$ of $M$ centered at $Z$.
Lemma~\ref{Lemma_transformingseparatrices} below shows how the separatrices in ${\rm Sep}_{B}$ transform under $\Pi_{(1)}$.
In this lemma we use the terminology of landing points with respect to the codimension~$1$ subvariety $D$ coinciding with the
exceptional divisor $D = \Pi_{(1)}^{-1} (Z)$.

\begin{lemma}\label{Lemma_transformingseparatrices}
Without loss of generality, we can assume that the transform of every separatrix $\mathcal{C} \in {\rm Sep}_{B}$ lands at a point
in $D = \Pi_{(1)}^{-1} (Z)$ that is singular for $\fol_1 = (\Pi_{(1)}^{-1})^{\ast} \fol$. In particular, $D = \Pi_{(1)}^{-1} (Z)$
can be assumed to be invariant by $\fol_1$.
Moreover, there exists
an irreducible Zariski-closed subset $Z_1 \subset {\rm Sing}\, (\fol_1)$ satisfying conditions~(i) and~(ii) above and verifying
$\Pi_{(1)} (Z_1) = Z$. Finally, the dimension ${\rm dim}\, (Z_1)$ of $Z_1$ is greater than or equal to the dimension of $Z$.
\end{lemma}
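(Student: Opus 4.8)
The plan is to analyze how the separatrices in ${\rm Sep}_B$ behave under the single blow up $\Pi_{(1)} : M_1 \to M$ centered at $Z$, and to argue that, after discarding a set of separatrices of Hausdorff dimension at most $2n-2$, all the surviving ones land at \emph{singular} points of $\fol_1$ lying on the exceptional divisor $D = \Pi_{(1)}^{-1}(Z)$. First I would record the dichotomy furnished by the elementary observations~(a)--(b) preceding the statement, applied to each irreducible component of $D$. If some component $D_0$ of $D$ is \emph{not} invariant by $\fol_1$, then by~(a) the leaves are transverse to $D_0$ off a proper analytic subset; but a separatrix $\mathcal{C}\in{\rm Sep}_B$ whose transform lands at a point where $\fol_1$ is transverse to $D_0$ would, pushing forward by $\Pi_{(1)}$, force the original leaf to be transverse to $\fol$ at the corresponding point of $Z$ — impossible, since $Z\subset{\rm Sing}\,(\fol)$ and $\mathcal{C}$ is genuinely a separatrix there. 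Hence the transforms of the separatrices in ${\rm Sep}_B$ can only land on \emph{invariant} components of $D$, and moreover only at points of $D\cap{\rm Sing}\,(\fol_1)$ by observation~(b). The phrase ``without loss of generality'' in the statement is then justified by throwing away the (at most $(2n-2)$-dimensional, by~(a)) set of separatrices landing on non-invariant components: since ${\rm Sep}_B$ has Hausdorff dimension $\kappa>2n-2$, the remainder still has dimension $\kappa$. After replacing $M$ by a further blow up if an irreducible component of $D$ failed to be invariant and no separatrix landed on it, we may assume $D=\Pi_{(1)}^{-1}(Z)$ is invariant by $\fol_1$.

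Next I would construct $Z_1$. Stratify $D\cap{\rm Sing}\,(\fol_1)$ into its irreducible Zariski-closed pieces; the set of (surviving) separatrices is the countable-over-strata union of those associated, via their landing point, to each stratum. By the first bullet in the ``Background on Hausdorff dimension'' box (a countable union of sets of dimension $<\kappa$ still has dimension $<\kappa$), at least one irreducible stratum $Z_1\subset{\rm Sing}\,(\fol_1)\cap D$ must carry a subfamily of separatrices of Hausdorff dimension exactly $\kappa$; choosing $Z_1$ minimal among Zariski-closed subsets of ${\rm Sing}\,(\fol_1)$ with this property yields conditions~(i) and~(ii) for $Z_1$ verbatim, just as $Z$ was chosen with respect to ${\rm Sep}_B$ and ${\rm Sing}\,(\fol)$. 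That $\Pi_{(1)}(Z_1)=Z$ is automatic: $Z_1\subset D=\Pi_{(1)}^{-1}(Z)$ gives $\Pi_{(1)}(Z_1)\subseteq Z$, while if $\Pi_{(1)}(Z_1)$ were a proper Zariski-closed subset $Z'\subsetneq Z$, then the separatrices associated with $Z_1$ would all be associated (downstairs) with singular points in $Z'$, contradicting condition~(ii) for $Z$ since that subfamily has dimension $\kappa$. (Here one uses that $\Pi_{(1)}$ identifies a separatrix downstairs with its strict transform upstairs, their singular points corresponding under $\Pi_{(1)}$.)

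For the dimension inequality ${\rm dim}\,(Z_1)\ge{\rm dim}\,(Z)$, the point is that $\Pi_{(1)}\vert_{Z_1}:Z_1\to Z$ is a dominant (indeed surjective) morphism of irreducible varieties, so ${\rm dim}\,(Z_1)\ge{\rm dim}\,(\Pi_{(1)}(Z_1))={\rm dim}\,(Z)$. Alternatively, and more in the spirit of the Hausdorff-dimension bookkeeping used throughout: the fibers of $\Pi_{(1)}$ over points of $Z$ are projective spaces of dimension ${\rm codim}(Z)-1\ge 1$, and the separatrices associated to a single singular point of $\fol$ downstairs spread — after blow up — over a subset of the corresponding fiber together with its neighborhood in $Z_1$, so the transverse Hausdorff dimension cannot drop; since condition~(i) forces this transverse dimension to be $\kappa>2n-2$ both upstairs and downstairs while a strictly smaller-dimensional $Z_1$ would make it impossible to support a family of that size transversally (by observation~(b)-type estimates, saturating a $(2{\rm dim}\,Z_1)$-real-dimensional set keeps dimension $\le 2n-2+\big(2{\rm dim}\,Z_1-(2n-4)\big)$ only when ${\rm dim}\,Z_1\ge{\rm dim}\,Z$), the inequality follows.

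The step I expect to be the main obstacle is the very first one — ensuring that \emph{after finitely many auxiliary blow ups} every relevant component of the exceptional divisor is invariant, while keeping the surviving family of separatrices of full Hausdorff dimension $\kappa$ at each stage. One must check that each such auxiliary blow up is legitimate (smooth center inside ${\rm Sing}$ of the current foliation, of codimension $\ge 2$) and that the process terminates, i.e.\ that one does not need to blow up infinitely often to render the exceptional locus invariant along the branch followed by the chosen separatrices; this is where the hypothesis that the separatrices form a set of large Hausdorff dimension, rather than a single curve, is doing real work, since it guarantees that the ``dicritical'' behaviour forcing a component to be non-invariant cannot persist under all the strata simultaneously without the family collapsing to dimension $\le 2n-2$.
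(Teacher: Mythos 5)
There is a genuine gap, and it sits in the very first (and central) step. You claim that a separatrix $\mathcal{C}\in{\rm Sep}_B$ whose strict transform lands at a point where $\fol_1$ is transverse to a non-invariant component of $D=\Pi_{(1)}^{-1}(Z)$ is ``impossible''; this is false. The radial foliation $x_1\partial/\partial x_1+\cdots+x_n\partial/\partial x_n$ at a point of $Z$ is the standard counterexample: every line through the singular point is a separatrix, and after blowing up, every one of their transforms meets the (non-invariant) exceptional divisor transversely at a \emph{regular} point of the transformed foliation. There is no contradiction with $Z\subset{\rm Sing}\,(\fol)$ --- a separatrix at a singular point says nothing about transversality downstairs. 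Your fallback, discarding these separatrices as a set of Hausdorff dimension at most $2n-2$, also fails: observation~(a) says precisely the opposite, namely that when $D$ is non-invariant the landing points fill an open, full-measure subset of $D$, so in the dicritical situation \emph{all} of ${\rm Sep}_B$ may land at regular points. The actual content of the ``without loss of generality'' in the paper is different in kind: it is a reduction within the proof of Proposition~\ref{PlayingBlowup_Proposition}. If even one separatrix of ${\rm Sep}_B$ has its transform landing at a regular point $q$ of $\fol_1$, then a foliated chart $\D^n\to M_1$ at $q$, composed with $\Pi_{(1)}$ (exactly as in Lemma~\ref{blowup_invariance}), \emph{is} the foliated meromorphic immersion satisfying (C.1)--(C.2), and Proposition~\ref{PlayingBlowup_Proposition} is proved outright for ${\rm Sep}_B$. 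The dicritical case is the terminating, favorable branch of the induction --- the thing the whole blow-up procedure is designed to detect --- not an impossibility to be ruled out nor a small set to be thrown away. Your closing paragraph correctly senses that dicritical behaviour is where the action is, but draws the wrong conclusion from it.

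The remaining parts of your proposal are essentially correct and agree with the paper: the existence of $Z_1$ by repeating the minimal-Zariski-closed-stratum argument (using that a countable union of sets of dimension $<\kappa$ has dimension $<\kappa$); the identity $\Pi_{(1)}(Z_1)=Z$ via condition~(ii) for $Z$, since $\Pi_{(1)}$ is a biholomorphism off the exceptional divisor and hence preserves the Hausdorff dimension of the family of separatrices; and ${\rm dim}\,(Z_1)\ge{\rm dim}\,(Z)$ from surjectivity of $\Pi_{(1)}\vert_{Z_1}$ (your first argument suffices; the ``alternative'' dimension-counting sketch is not needed and is not sound as stated). But without repairing the first step the lemma is not established.
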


\begin{proof}
Recall that separatrices are irreducible by definition. Thus, the transform $\mathcal{C}_1$ of each separatrix $\mathcal{C}$ in
${\rm Sep}_{B}$ lands at a well defined (unique) point $q$ in the exceptional divisor $\Pi_{(1)}^{-1} (Z)$. Clearly $\Pi_{(1)} (q)$ lies in
$Z$. Next, if the transform of some separatrix in ${\rm Sep}_{B}$ lands at a regular point $q$ of $\fol_1$, then there are foliated
coordinates around $q$ yielding a diffeomorphism from $\D^n$ to $M_1$ (in particular the exceptional divisor
$\Pi_{(1)}^{-1} (Z)$ is not invariant by $\fol_1$).
As in Lemma~\ref{blowup_invariance}, once the mentioned diffeomorphism is composed
with the projection $\Pi_{(1)}$, we obtain the desired foliated immersion proving Proposition~\ref{PlayingBlowup_Proposition}.

Thus we can assume that the transforms of all separatrices in ${\rm Sep}_{B}$ land at points of $D = \Pi^{-1} (Z)$ that are
singular for $\fol_1$.
The existence of the irreducible component $Z_1$ satisfying conditions~(i) and~(ii) now follows from repeating the argument
yielding the existence of the initial $Z$ (up to reducing the set ${\rm Sep}_{B}$ without affecting its Hausdorff dimension).
The fact that separatrices landing at $Z_1$ project to separatrices in ${\rm Sep}_{B}$ associated with points in $Z$
implies that $\Pi_{(1)} (Z_1) = Z$ and also
yields ${\rm dim}\, (Z_1) \geq {\rm dim}\, (Z)$. The lemma is proved.
\end{proof}

Beginning with the irreducible set $Z$ and the blow up centered at $Z$, we can now blow up $Z_1$ and continue by induction.
Thus we can consider arbitrary sequences of blow ups
\begin{equation}
(M,\fol, Z) = (M_0, \fol_0, Z_0) \stackrel{\Pi_{(1)}}{\longleftarrow} (M_1, \fol_1, Z_1) \stackrel{\Pi_{(2)}}{\longleftarrow} \cdots
\stackrel{\Pi_{(k)}}{\longleftarrow} (M_k, \fol_k, Z_k) \label{sequenceofblowups}
\end{equation}
such that all of the following hold:
\begin{itemize}
	\item[1.] $\Pi_{(j)}$ is the blow up of $M_{j-1}$ centered at $Z_{j-1}$, $j=1, \ldots , k$.
	
	\item[2.] For every $j=1, \ldots, k$, $Z_j \subseteq {\rm Sing}\, (\fol_j)$ and $\Pi_{(j)} (Z_j) = Z_{j-1}$.
	
	\item[3.] For every subset $S= S_0 \subseteq {\rm Sep}_{B}$ with Hausdorff dimension equal to $\kappa > 2n-2$, there
	are sets $S_j$ fulfilling the following conditions:
	   \begin{itemize}
	    \item[3.A] The set $S_j$ consists entirely of separatrices of $\fol_j$ landing at $Z_j$.
	    
	    \item[3.B] For every $j=1, \ldots, k$, $\Pi_{(j)} (S_j) \subseteq S_{j-1}$.
	   \end{itemize}
\end{itemize}

Since ${\rm dim}\, (Z_i) \geq {\rm dim}\, (Z_{i-1})$ for every $i=1,2, \ldots$, the sequence $\{ {\rm dim}\, (Z_j)\}_{j}$  eventually
becomes constant. Thus there is no loss of generality in assuming that
the centers $Z = Z_0, \ldots , Z_k, \ldots$ have all the same dimension.

We are now ready to prove Proposition~\ref{dicritical_divisors}. Again we will use the terminology of ``landing points'' with respect
to the (total) exceptional divisor.

\begin{proof}[Proof of Proposition~\ref{dicritical_divisors}]
To prove the proposition we will show that the situation described in the above items~1, 2, and~3 cannot occur for every sequence
of blow ups. More precisely, it will be seen that under a suitable sequence of blow ups, there are separatrices in ${\rm Sep}_{B}$
whose transforms will land at points of the corresponding exceptional divisor that are regular for $\fol_k$.

Let us first consider the prototypical case where all the centers $Z_j$ are reduced to points (i.e., each $Z_j$ is a discrete
set of points). In this case, we set $Z = P_0 \in {\rm Sing}\, (\fol)$
and consider the separatrices in ${\rm Sep}_{B}$. These separatrices are all associated with $P_0$. By definition,
separatrices are irreducible and, up to fixing local coordinates around $P_0$, we can assume none of them is contained in
a Cartesian hyperplane. In fact, separatrices contained in the union of the Cartesian hyperplanes form a set of Hausdorff dimension
bounded by $2n-2$ so that they can be deleted from ${\rm Sep}_{B}$ without loss of generality. Also, being irreducible, 
each separatrix $\mathcal{C}$
in ${\rm Sep}_{B}$ possesses a one-to-one Puiseux parameterization $\mathcal{P}_{\mathcal C}$ none of whose coordinates vanishes identically
in view of the preceding. Thus, every separatrix $\mathcal{C}$ in ${\rm Sep}_{B}$ has a Puiseux parameterization of
the form $\mathcal{P}_{\mathcal C} (s) = (s^{l_1} h_1, \ldots , s^{l_n} h_n)$ where $l_i$ is a positive integer and where $h_i$ is a
non-vanishing holomorphic function around $0 \in \C$, $i=1, \ldots, n$. Now, we have the following:

\vspace{0.1cm}

\noindent {\it Claim}. Without loss of generality, we can assume that the $n$-tuple of integers $(l_1, \ldots , l_n)$
does not depend on the choice of the separatrix $\mathcal{C}$.

\vspace{0.1cm}

\noindent {\it Proof of the Claim}. Since the $l_i$ are (positive) integers, there are only countably many possible $n$-tuples.
Since a countable union of sets with Hausdorff dimension strictly less than $\kappa$ still has Hausdorff dimension strictly less
than $\kappa$, there follows the existence of a subset $S= S_0 \subseteq {\rm Sep}_{B}$ of Hausdorff dimension $\kappa$ and such that
all the (irreducible) Puiseux parameterizations of separatrices in $S=S_0$ share the same integers $l_1, \ldots , l_n$.\qed

Sharing the same $n$-tuple $(l_1, \ldots , l_n)$, all these separatrices become smooth at the same blow up. In other words, the preceding
allows us to assume without loss of generality that the separatrices in ${\rm Sep}_{B}$ are all smooth.
In particular, if the situation described in items~1, 2, and~3 were to occur for every sequence of blow ups,
there would follow the existence of a set of separatrices possessing Hausdorff dimension strictly greater than $2n-2$
all of whose elements would always land at the same point, regardless of the sequence of blow ups chosen. Note that the last assertion
holds thanks to the fact that we are currently working under the additional assumption that every center
$Z_j$ consists of a discrete set of points.
In fact, a countable union of sets with Hausdorff dimension bounded by $2n-2$ still has Hausdorff dimension bounded by $2n-2$. In other words,
for every $j$, there must exist at least one point in $Z_j$ at which the transforms of the separatrices belonging to a certain
subset of Hausdorff dimension $\kappa > 2n-2$ of ${\rm Sep}_{B}$ land. Now,
separatrices in this set
necessarily have infinite contact so that the set in question is reduced to a single separatrix. This yields a contradiction since
the Hausdorff dimension of this set is strictly greater than $2n-2 \geq 2$.

Now that we have established Proposition~\ref{dicritical_divisors} in the special case where all the centers $Z_j$ are reduced
to points, let us consider the general case. Recall that all the centers $Z=Z_0, \ldots, Z_k, \ldots$ have the same dimension.
Set ${\rm dim}\, (Z) = {\rm dim}\, (Z_0) = \cdots = {\rm dim}\, (Z_k) = m \leq n-2$. To abridge notation,
the initial set ${\rm Sep}_{B}$ will be identified with its transforms in the following discussion.
Without loss of generality, we can also
assume that the exceptional divisor is not empty so that the terminology ``landing points'' can again be used with respect
to the (total) exceptional divisor. Now, the center $Z_j$ 
is a codimension~$n-m \geq 2$ submanifold of $M_j$ and this holds for every~$j$. With the mentioned identifications, all separatrices
in ${\rm Sep}_{B}$ land at points in $Z_j$.

In the sequel, we shall make several ``generic'' choices. In all these instances, the word ``generic'' should be
understood as lying in the complement of {\it countably many proper Zariski-closed subsets}. In particular, these choices are
also generic in terms of the Lebesgue measure of corresponding dimension. With this terminology in place, we proceed as follows.
For every point $p \in Z_j$, let
${\rm Sep}_{B}^{(p)}$ denote the subset of ${\rm Sep}_{B}$ consisting of separatrices landing at~$p \in Z_j$.
Since sets of separatrices landing at proper Zariski-closed
subsets of $Z_j$ have Hausdorff dimension at most $2n-2$, the theorem will follow once we prove that, for a ``generic''
point $p$ of $Z_j$, the set ${\rm Sep}_{B}^{(p)}$ is transversely parameterized by an analytic set $\mathcal{S}^{\pitchfork}_p$ whose
complex dimension is at most $n-m-2$. In other words, the set ${\rm Sep}_{B}^{(p)}$ being the saturated by $\fol$ of the
analytic set $\mathcal{S}^{\pitchfork}_p$, the dimension of ${\rm Sep}_{B}^{(p)}$ is at most $n-m-1$. If this is so, then Fubini theorem
applied to the set
$$
\bigcup_{p \in Z_j} {\rm Sep}_{B}^{(p)}
$$
implies that the set in question has (non-zero) locally finite $d$-dimensional mass in some real dimension~$d \leq 2n-2$. Therefore
this value of~$d$ must coincide with the Hausdorff dimension of the set in question thus contradicting our assumption that
this Hausdorff dimension equals~$\kappa > 2n-2$.

To prove that ${\rm Sep}_{B}^{(p)}$ is transversely parameterized by a complex analytic set $\mathcal{S}^{\pitchfork}_p$ whose (complex) dimension is at most 
$n-m-2$ is slightly more involved than in the previous situation where $Z_j$ were reduced to points. The argument is detailed below.

Around a ``generic'' point $p$ of $Z_j$,
there are coordinates $(x_1, \ldots ,x_n)$ such that $Z_j = \{ x_{m+1} = \cdots = x_n =0\}$ and then separatrices in 
${\rm Sep}_{B}$ can again be parameterized by
$$
s \mapsto (s^{l_1} h_1, \ldots , s^{l_n} h_n) + (p_1, \ldots, p_m, 0, \ldots ,0) \, .
$$
Here $p \in Z_j$ is identified with $(p_1, \ldots, p_m, 0, \ldots ,0)$.
The set $\bigcup_{p \in Z_j} {\rm Sep}_{B}^{(p)}$ can be partitioned in countably many sets encoded by the $n$-tuples
$(l_1, \ldots, l_n)$ of positive integers. Using again the fact that a countable union of sets having Hausdorff dimension
strictly less than $\kappa$ will still have Hausdorff dimension strictly less than $\kappa$, we can therefore assume without
loss of generality that all the separatrices in $\bigcup_{p \in Z_j} {\rm Sep}_{B}^{(p)}$ share the same set of
``exponents'' $(l_1, \ldots ,l_n)$.

Once again, up to choosing suitable local coordinates $(x_1, \ldots ,x_n)$, we can assume that the separatrices in ${\rm Sep}_{B}^{(p)}$
are not contained in any of the coordinate hyperplanes given by $\{ x_i = 0\}$ for $i=m+1, \ldots, n$. In other words, we can assume
without loss of generality that none of the function
$h_{m+1}, \ldots, h_n$ vanish at $p$ (i.e., the corresponding coordinates of their Puiseux parameterizations do not vanish identically).
Up to systematically
performing blow ups centered at $Z_j$, the first $m$ coordinates of the parameterization are preserved,
up to the natural identifications. The remaining $n-m$ coordinates, related to the direction transverse to $Z_j$ are affected
in the standard way. These separatrices will become smooth because the derivative at $s=0$ of
one of these ``last'' $n-m$-coordinates will eventually be different from zero. The sequence of blow ups only depends on the
exponents $l_{m+1}, \ldots , l_n$. Thus, arguing as above, all these separatrices become smooth at a same blow up.
Now, the contact between separatrices {\it landing at the same point}\, of $Z$ is defined taking in consideration
the normal bundle of $Z_j$. In more accurate terms, we define the contact taking into consideration only the $n-m$ ``last''
coordinates. They can therefore be pulled apart and, since none of these blow ups is dicritical (by assumption), they
are parameterized by analytic sets of complex dimension at most $n-m-2$ (the dimension of the exceptional divisor over one point in the
previous center minus 1). In fact, if $\Pi$ denotes a blow up centered at a $m$-dimensional submanifold, then the pre-image
$\Pi^{-1} (p)$ of a point $p$ in this center has dimension $n-m-1$.

The last statement falls slightly short of proving Proposition~\ref{dicritical_divisors}. The reason lies in the fact that,
a priori, only separatrices having finite contact {\it with respect to the normal bundle}\, can be pulled apart by the systematic
blow up of centers $Z_j$ as above. To clarify this statement, consider again sufficiently generic local coordinates
$(x_1, \ldots ,x_n)$ with $Z_j = \{ x_{m+1} = \cdots = x_n =0\}$. In view of the preceding, we can assume that all separatrices
in ${\rm Sep}_{B}^{(p)}$ are smooth and admit a Puiseux parameterization of the form
$$
s \mapsto (\varphi_1 (s), \ldots , \varphi_{n-1}(s), s) \, 
$$
where $p$ is identified with the origin of $\C^n$. Let then $\mathcal{C}_1$ and $\mathcal{C}_2$ be two separatrices in ${\rm Sep}_{B}^{(p)}$
with Puiseux parameterizations respectively given by
$$
(\varphi_1^{(1)} (s), \ldots , \varphi_{n-1}^{(1)}(s), s) \; \; \; {\rm and} \; \; \;
(\varphi_1^{(2)} (s), \ldots , \varphi_{n-1}^{(2)}(s), s) \, .
$$
Note that the separatrices $\mathcal{C}_1$ and $\mathcal{C}_2$ can be
taken apart by a sequence of blow ups of the codimension~$n-m$ singular set $Z_j = \{ x_{m+1} = \cdots = x_n =0\}$ unless
we have $\varphi_i^{(1)} = \varphi_i^{(2)}$ for every $i=m+1, \ldots, n-1$. Therefore, Proposition~\ref{dicritical_divisors}
follows at once from Lemma~\ref{onemorelemma_rightbelow} below.
\end{proof}

\begin{lemma}\label{onemorelemma_rightbelow}
	With the preceding notation, for a generic point $p \in Z_j$ and a generic separatrix $\mathcal{C}$ in ${\rm Sep}_{B}^{(p)}$,
there are at most countably many separatrices in ${\rm Sep}_{B}^{(p)}$ that cannot be pulled apart from $\mathcal{C}$ by means
of successive blow ups centered at the $m$-dimensional singular sets $Z_j$.
\end{lemma}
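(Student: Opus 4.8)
The plan is to argue by contradiction, converting an excess of mutually non-separable separatrices into a positive-dimensional family of $\fol$-invariant analytic sets through $p$ and then, after blowing up, into blow-up centers whose dimension exceeds $m$ --- in conflict with the stabilization of $\dim Z_j$ recorded just before~\eqref{sequenceofblowups}. To begin with, I would keep the local picture already set up above: a generic $p\in Z_j$, coordinates $(x_1,\dots,x_n)$ with $Z_j=\{x_{m+1}=\cdots=x_n=0\}$ and $p$ at the origin, and all separatrices in ${\rm Sep}_B^{(p)}$ smooth, with the same exponents, parameterized as graphs $s\mapsto(\varphi_1(s),\dots,\varphi_{n-1}(s),s)$ over the $x_n$-axis (so $\varphi_i(0)=0$). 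As recalled just above the statement, two such separatrices cannot be pulled apart by blow ups centered at $Z_j$ and its successive $m$-dimensional transforms precisely when they have the same \emph{normal part} $\varphi_\ast:=(\varphi_{m+1},\dots,\varphi_{n-1})$; thus the lemma amounts to saying that, for generic $\mathcal{C}$, only countably many separatrices in ${\rm Sep}_B^{(p)}$ share the normal part of $\mathcal{C}$. The key geometric remark is that all separatrices with a prescribed normal part $\varphi_\ast$ lie on the smooth $(m+1)$-dimensional graph
$$
W_{\varphi_\ast}:=\{\, x_i=\varphi_i(x_n)\ \text{for}\ i=m+1,\dots,n-1 \,\}\subset\C^n\,,
$$
which contains $Z_j$ as a codimension-one submanifold (here one uses $\varphi_i(0)=0$), and that they are exactly the separatrices at $p$, transverse to $Z_j$, of the induced one-dimensional foliation $\fol|_{W_{\varphi_\ast}}$.

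Suppose the lemma fails. After deleting a set of Hausdorff dimension at most $2n-2$ (countable unions, and Lemma~\ref{Leaveswithoutholonomy}), we are left with a subset of ${\rm Sep}_B$ of Hausdorff dimension $\kappa>2n-2$ every separatrix of which has uncountably many non-separable companions; by Fubini over $Z_j$ the same holds for a generic $p$. Since ``having the same normal part'' is an analytic condition on the (analytic) transverse parameter space of ${\rm Sep}_B^{(p)}$, each uncountable non-separable class is a positive-dimensional analytic family; and if the normal part were constant on our subset, the whole subset would lie in a single $(m+1)$-dimensional $W_{\varphi_\ast}$ and hence have (real) Hausdorff dimension at most $2m+2\le 2n-2$, a contradiction. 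So the normal parts occurring form a positive-dimensional family $\{\varphi_\ast^{(t)}\}_{t\in T}$, each carried by uncountably many separatrices through $p$. For each $t$, these uncountably many separatrices are leaves of the regular part of the one-dimensional foliation $\fol|_{W_{\varphi_\ast^{(t)}}}$ of the $(m+1)$-fold $W_{\varphi_\ast^{(t)}}$, whose leaf space is transversally $m$-dimensional; from this one deduces that these leaves fill $W_{\varphi_\ast^{(t)}}$, so that each $W_{\varphi_\ast^{(t)}}$ is $\fol$-invariant.

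Now I would blow up the center $Z_j$. By the reduction made in Lemma~\ref{Lemma_transformingseparatrices} this blow up is non-dicritical, so its exceptional divisor $E$ is invariant by the transformed foliation. Since $W_{\varphi_\ast^{(t)}}$ is invariant and contains $Z_j$ in codimension one, its strict transform $\widetilde{W}_{\varphi_\ast^{(t)}}$ is invariant and meets $E$ along a section $Z_1^{(t)}$ of the projective bundle $E\to Z_j$ --- an $m$-dimensional invariant subvariety carrying the transforms of the (uncountably many) separatrices associated with $\varphi_\ast^{(t)}$. As $t$ ranges over the positive-dimensional family $T$, the sections $Z_1^{(t)}$ sweep out a subvariety of $E$ of dimension at least $m+1$, contained in the set of landing points of transforms of separatrices in ${\rm Sep}_B$. (For this last point one uses that separatrices with a fixed normal part occur over a dense set of base points of $Z_j$, as follows from analytic continuation of the family within ${\rm Sep}_B$.) But the Zariski closure of this landing locus is, by construction, the next blow-up center $Z_1$, whose dimension equals that of $Z_j$, i.e.\ $m$, by the stabilization recalled before~\eqref{sequenceofblowups}. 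This contradiction proves the lemma.

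The step I expect to be the main obstacle is the first half of the second paragraph: turning the bare failure of the genericity statement into an honest positive-dimensional \emph{analytic} family of distinct normal parts, which requires matching the potential-theoretic vocabulary (Hausdorff dimension, negligible sets, countable unions) with the analytic structure of the space of separatrices through $p$ and with the countability dichotomy for analytic fibers. The companion technical point --- that the closure of a union of uncountably many leaves of $\fol|_{W_{\varphi_\ast}}$ is all of $W_{\varphi_\ast}$, so that $W_{\varphi_\ast}$ is $\fol$-invariant despite the a priori uncontrolled singular locus of $\fol|_{W_{\varphi_\ast}}$ --- also needs care, as does the claim that the sections $Z_1^{(t)}$ genuinely lie in the landing locus. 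Once these are in place, the remaining blow-up and dimension bookkeeping is routine and closes the argument.
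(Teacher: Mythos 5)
Your route is genuinely different from the paper's. The paper argues locally and infinitesimally: it takes a holomorphic generator $\mathcal{Y}$ of $\fol_j$ near $p$, uses the multiplicity along $\mathcal{C}$ and the order with respect to $Z_j$ to reduce (after finitely many further blow-ups) to the case where the linear part $\mathcal{Y}^{(1)}$ is nontrivial in the normal variables, and then shows that an uncountable non-separable class would produce, after blowing up $p$, a positive-dimensional algebraic set $A$ of singular points in the exceptional divisor whose cone $\mathcal{A}$ is $\mathcal{Y}^{(1)}$-invariant with dicritical induced foliation; this forces $\mathcal{Y}^{(1)}|_{\mathcal{A}}$ to be a \emph{constant} multiple of the radial vector field, contradicting the vanishing of $\mathcal{Y}$ along $Z_j$. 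Your proposal instead goes through invariant $(m+1)$-folds $W_{\varphi_\ast}$ and a dimension count on the next blow-up center, and as written it has several gaps.

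Concretely: (i) the set ${\rm Sep}_{B}^{(p)}$ carries no a priori analytic structure --- it is merely a set of germs subject to a Hausdorff-dimension hypothesis on its union --- so the step ``each uncountable non-separable class is a positive-dimensional analytic family'' has no foundation (you flag this yourself, but everything after the family $\{\varphi_\ast^{(t)}\}_{t\in T}$ depends on it). (ii) The invariance of $W_{\varphi_\ast^{(t)}}$ does not follow from its containing uncountably many separatrices through the single point $p$: the union of uncountably many analytic curves through a common point can be contained in a proper analytic subset of the $(m+1)$-fold (for instance in a two-dimensional invariant surface through $p$ when $m+1>2$), so the tangency locus of $\fol$ with $W_{\varphi_\ast^{(t)}}$ need not be Zariski-dense and $W_{\varphi_\ast^{(t)}}$ need not be invariant. (iii) Even granting invariance, the section $Z_1^{(t)}$ of the exceptional divisor is determined only by the first-order jet of $\varphi_\ast^{(t)}$ along $Z_j$, so distinct normal parts may yield the \emph{same} section after one blow-up; the claim that the $Z_1^{(t)}$ sweep out something of dimension $\geq m+1$ requires an iteration on the order of contact that is not supplied. (iv) The final contradiction misreads the construction: $Z_1$ is not the Zariski closure of the landing locus but a \emph{minimal} irreducible Zariski-closed set for which the landing separatrices still have Hausdorff dimension $\kappa$, so a landing locus of dimension $m+1$ is not by itself incompatible with $\dim Z_1=m$ unless one also rules out concentration of the relevant separatrices on an $m$-dimensional subvariety. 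The paper's vector-field argument sidesteps all four points, which is precisely why it is phrased at the level of the linear part of $\mathcal{Y}$ rather than at the level of invariant subvarieties.
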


\begin{proof}
We fix a generic separatrix $\mathcal{C}$ in ${\rm Sep}_{B}^{(p)}$ as in the statement.
The corresponding foliation is denoted by $\fol_j$.
To abridge notation, we can assume that $\mathcal{C}$ coincides with the $x_n$-axis so that it has a natural Puiseux parameterization
given by $s \mapsto (0, \ldots, 0, s)$.
We need to prove that the set ${\rm Sep}_{B}^{(p,x_n)} \subset {\rm Sep}_{B}^{(p)}$ consisting of separatrices in ${\rm Sep}_{B}^{(p)}$
with a Puiseux parameterization of the form $s \mapsto (\varphi_1, \ldots, \varphi_m,0, \ldots, 0, s)$ contains at most countably
many elements (indeed, finitely many elements).

For this, let us first consider a vector field $\mathcal{Y}$ representing the foliation $\fol_j$ around the point $p$. In other words,
$\mathcal{Y}$ is a holomorphic vector field with zero-set of codimension at least~$2$ which is tangent to $\fol_j$. Next, we let
\begin{equation}
\mathcal{Y} = F_1 \frac{\partial}{\partial x_1} + \cdots + F_n \frac{\partial}{\partial x_n} \, . \label{definingYideals}
\end{equation}
Since $\fol_j$ is singular over $Z_j = \{ x_{m+1} = \cdots = x_n =0\}$, all the coordinate functions $F_i$, $i=1, \ldots, n$,
belong to the ideal $\mathcal{I}\, (x_{m+1}, \ldots, x_n)$ generated by the coordinates $x_{m+1}, \ldots, x_n$. Furthermore,
since the $x_n$-axis is a separatrix for $\fol_j$, the holomorphic function $s \mapsto F_n (0, \ldots, 0, s)$ vanishes at
$s=0$ but it is not identically zero while all the functions $s \mapsto F_i (0, \ldots, 0, s)$ are constant equal to zero for $i=1,\ldots, n-1$.
The order at $s=0$ of the function $s \mapsto F_n (0, \ldots, 0, s)$ is called {\it the multiplicity of $\fol_j$ along the separatrix
$\mathcal{C} \simeq \{ x_1= \cdots = x_{n-1} = 0\}$}, see \cite{helenajulio_RMS}, page~311.

The interest of this notion of multiplicity of a foliation along a separatrix lies in the fact that they form a decreasing sequence
under blow ups so that it eventually becomes stationary. More precisely, consider first the family of automorphisms of $\C^n$ given by
$\Lambda (x_1, \ldots ,x_n) = (x_1, \ldots ,x_m, \lambda x_{m+1}, \ldots , \lambda x_n)$, with $\lambda \in \C^{\ast}$. For
the blow ups centered at the submanifolds $Z_j$ as above, it is useful to define the {\it order of $\fol_j$ with respect
to the center $Z_j$}\, as the unique positive integer $d$ for which
\begin{equation}\label{homogeneous_component}
\lim_{\lambda \rightarrow 0} \frac{1}{\lambda^{d-1}} \Lambda^{\ast} X
\end{equation}
yields a non-trivial vector field. In the present case where $\mathcal{Y}$ is singular over $Z_j = \{ x_{m+1} = \cdots = x_n =0\}$
the order~$d$ is strictly positive and it equals~$1$ if and only if the vector field
$$
\mathcal{Y}_2 = F_{m+1} \frac{\partial}{\partial x_{m+1}} + \cdots + F_n \frac{\partial}{\partial x_n} 
$$
has order~$1$ at the origin of $\C^{n-m}$ viewed as a vector field in the variables $x_{m+1}, \ldots, x_n$ with coefficients
in $\C [x_1, \ldots ,x_m]$.

When the sequence formed by the multiplicity of a foliation along the separatrix
$\mathcal{C} \simeq \{ x_1= \cdots = x_{n-1} = 0\}$ becomes stationary, then the order of $\fol_j$ with respect
to the center $Z_j$ is necessarily equal to~$1$, see Section~3 of \cite{helenajulio_RMS} and note that, whereas \cite{helenajulio_RMS}
deals with the case of $3$-dimensional ambient manifolds, the proof given there applies equally well to the present case.
Thus, up to performing finitely many additional blow ups of the centers
$Z_j$, we can assume without loss of generality that the order of $\fol_j$ with respect
to $Z_j$ is always equal to~$1$. Now, given that $p$ can be chosen generic inside $Z_j$, we can assume that
the ``polynomial coefficients'' of $\mathcal{Y}_2$ in $\C [x_1, \ldots ,x_m]$ do not vanish at~$p$''.

Summarizing the two preceding paragraphs, with $p$ identified with the
origin of $\C^n$ as before, we can assume that the Taylor series of $\mathcal{Y}$ at the origin begins with a degree~$1$
(homogeneous) vector field having the form
\begin{equation}
\mathcal{Y}^{(1)} = H_1 \frac{\partial}{\partial x_1} + \cdots + H_n \frac{\partial}{\partial x_n} \label{linearpartofY}
\end{equation}
where the coordinate functions $H_i$, $i=1, \ldots, n$, satisfy all of the following:
\begin{itemize}
	\item Every $H_i$ is a linear form with constant coefficients in the variables $x_{m+1}, \ldots, x_n$.

	\item At least one of the forms $H_{m+1}, \ldots , H_n$ does not vanish identically (but the remaining forms
	$H_1, \ldots , H_m$ might vanish identically).

\end{itemize}

We are now ready to derive the statement of Lemma~\ref{onemorelemma_rightbelow}. We will
show that ${\rm Sep}_{B}^{(p,x_n)}$ contains only countably many elements.
Assume aiming at a contradiction that the statement is false. Clearly all the separatrices in ${\rm Sep}_{B}^{(p,x_n)}$ can
be pulled apart by systematically performing one-point blow ups. Now, the contact between separatrices in ${\rm Sep}_{B}^{(p,x_n)}$ is defined in
the standard way. Arguing as in the case where all the centers $Z_j$ were reduced
to points, there follows the existence of uncountably many separatrices
in ${\rm Sep}_{B}^{(p,x_n)}$ which have pairwise contact of order $s \in \N$, for some fixed positive integers~$s$. Denote by
$\mathcal{P}_s$ the set formed by these separatrices. Since
$\mathcal{C} \simeq \{ x_1= \cdots = x_{n-1} = 0\}$ is chosen ``generic'', we can assume that $\mathcal{C}$ belongs to $\mathcal{P}_s$,
i.e., the separatrices in question have contact of order~$s$
with $\mathcal{C}$ as well.
Because the contact between these separatrices fall by one unity with each blow up, we can assume to abridge the discussion that
$s=1$ since the general case follows by a straightforward induction argument.

To obtain a contradiction proving Lemma~\ref{onemorelemma_rightbelow}, let us perform the blow up of $\fol_j$
centered at~$p$. Let $\tilf_j$ denote the corresponding transform of $\fol_j$. Each separatrix in $\mathcal{P}_s$ defines
a singular point for $\tilf_j$ in the exceptional divisor $E \simeq \CP^{n-1}$. The Zariski-closure of these singular points in $E$
defines a proper algebraic set $A$ of $E \simeq \CP^{n-1}$ containing the origin of the standard affine coordinates
$z_1, \ldots , z_n$ for the blow up of $\C^n$, with $z_n =x_n$. The set $A$ has strictly positive dimension since
$\mathcal{P}_s$ contains uncountably many separatrices.

In slightly more precise terms, note that $A$ is contained in
the projective subspace of $E \simeq \CP^{n-1}$ associated with the span of $x_1, \ldots, x_m, x_n$ (the tangent space of $Z_j$
plus the direction of $x_n$). In particular, the intersection of the transform of $Z_j$ and $A$ is not empty.

Next, note that the behavior of $\fol_j$ with respect to the set $A$
is {\it dicritical}\, in the sense that all points in $A$ are singular for $\tilf_j$. The last statement can be made more
accurate as follows. Denote by $\mathcal{A} \subset \C^n$ the analytic set of $\C^n$ which is invariant by the radial vector field
of $\C^n$ and whose tangent cone at the origin is identified with~$A \subset E \simeq \CP^{n-1}$. The set $\mathcal{A}$ is unequivocally
defined and it is invariant by the linear vector field $\mathcal{Y}^{(1)}$ in~(\ref{linearpartofY}). The restriction to $\mathcal{A}$ of $\mathcal{Y}^{(1)}$
yields then a dicritical foliation in the usual sense. However, the latter foliation being
dicritical, there follows that restricted to $\mathcal{A}$ the vector field $\mathcal{Y}^{(1)}$ is a multiple of the radial vector field
and, in fact, it must be a constant multiple of the radial vector field since $\mathcal{Y}^{(1)}$ has degree~$1$. This yields the desired
contradiction since $\mathcal{Y}$ is singular on $Z_j = \{ x_{m+1} = \cdots = x_n =0\}$ (c.f., Equation~\ref{definingYideals})
and the transform of $Z_j$ intersects $A$ non-trivially.
The proof of Lemma~\ref{onemorelemma_rightbelow} is completed.
\end{proof}

\section{Meromorphic solutions, parabolic leaves, and invariant surfaces}\label{meromorphicsolutions_brunellatheorem}

Throughout this section, $M$ will stand for a complex projective manifold of dimension~$n$ while $X$ will denote a meromorphic
vector field defined on $M$. The divisor of poles of $X$ is denoted by $(X)_{\infty}$ and the foliation on $M$ associated with
$X$ is denoted by $\fol$.

The purpose of this section is to prove the theorem below:

\begin{theorem}
	\label{classification-foliation}
	Let $M$, $X$, and $\fol$ be as above and assume the existence of a
	foliated set $\mathcal{M} \subset M \setminus (X)_{\infty}$ having Hausdorff dimension strictly greater than $2n-2$ and such that for every point
	$p \in \mathcal{M}$, the corresponding solution of $X$ yields a meromorphic map defined on all of $\C$. Then, for every $\fol$-invariant
	irreducible surface $N \subset M$ (not contained in the singular set of $\fol$), the foliation $\fol_N$ induced on $N$ by $\fol$
	is birationally equivalent to one of the following types of foliations:
	\begin{enumerate}
		\item A fibration.
		\item A Riccati foliation.
		\item A turbulent foliation.
	\end{enumerate}
\end{theorem}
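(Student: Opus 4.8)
The proof splits into two parts. In the first one shows that, for a $\fol$-invariant surface $N$, no leaf of the induced foliation $\fol_N$ is a hyperbolic Riemann surface; in the second one invokes the classification of foliations on projective surfaces all of whose leaves are parabolic or rational. The hypothesis on $\mathcal{M}$ is used only in the first part, via Brunella's plurisubharmonicity theorem for the leafwise Poincar\'e metric, and this is also the technically delicate step.

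\textbf{Part 1: no hyperbolic leaves.} A meromorphic solution of $X$ extends to a genuine holomorphic map $\Phi:\C\to M$ tangent to $\fol$, since in complex dimension one there is no indeterminacy. If the solution through $p\in\mathcal{M}$ has no poles, its leaf is covered by $\C$, so (Picard) that leaf and the enlarged leaf obtained from it are parabolic or rational; such points can be discarded, and we may assume that the set $\mathcal{M}_1\subset\mathcal{M}$ of points whose solution has at least one pole still has Hausdorff dimension $>2n-2$. At a pole $t_i$ of the solution through $p\in\mathcal{M}_1$ the point $\Phi(t_i)$ cannot be regular for $\fol$: if $\Phi(t_i)$ lies on $(X)_{\infty}$ this is excluded by Lemma~\ref{lemma_invaraincepoledivisor} (at a transversality point) or else forces the leaf $L_p$ through $p$ to be contained in $(X)_{\infty}$, against $p\in\mathcal{M}$; and if $\Phi(t_i)$ lies off $(X)_{\infty}$, then $\fol$ being regular there the solution could not have a (finite) pole at $t_i$. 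Hence $\Phi(t_i)\in{\rm Sing}(\fol)$, the local branch of $\Phi$ near $t_i$ induces a separatrix of $\fol$ at $\Phi(t_i)$, and consequently the leaves through $\mathcal{M}_1$ induce, inside any fixed neighbourhood of ${\rm Sing}(\fol)$, a set of separatrices of $\fol$ of Hausdorff dimension $>2n-2$.

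Now apply Proposition~\ref{addingendstoleaves}: after deleting from $\mathcal{M}_1$ a set of Hausdorff dimension at most $2n-2$ we may assume that, for every remaining $p$ and every pole $t_i$, the point $\Phi(t_i)$ has to be added to the corresponding branch in forming the enlarged leaf $\widehat{L}_p$; deleting a further set of Hausdorff dimension at most $2n-2$ (Lemma~\ref{Leaveswithoutholonomy}) we may assume $L_p$ has trivial holonomy, so the added ends carry multiplicity one and $\Phi$ descends to a non-constant holomorphic map $\C\to\widehat{L}_p$. Thus $\widehat{L}_p$ is parabolic or rational for all $p$ in a subset of $\mathcal{M}$ of Hausdorff dimension $>2n-2$, so the set of points of $M$ lying on a parabolic or rational enlarged leaf is not contained in the polar locus of any non-trivial plurisubharmonic function. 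By Brunella's theorem~\cite{subharmonic-variation} the logarithm of the leafwise Poincar\'e density is plurisubharmonic on $M$ and equals $-\infty$ exactly on the non-hyperbolic enlarged leaves; being $-\infty$ on a non-polar set, it is identically $-\infty$, i.e., no enlarged leaf of $\fol$ is hyperbolic. Since the hypothesis is birationally invariant the same holds in every birational model; using Proposition~\ref{dicritical_divisors} to realise enlarged leaves as ordinary leaves in suitable blow ups, it follows that in every birational model every leaf of the foliation is parabolic or rational -- in particular every leaf of $\fol_N$ on a desingularisation of $N$.

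\textbf{Part 2: classification on surfaces, and the main difficulty.} It remains to classify singular foliations $\fol_N$ on a projective surface all of whose leaves are parabolic or rational. For this I would use the structure theory of foliations on compact surfaces organised by the Kodaira dimension of the foliation, as in \cite{IMPA-book}, together with standard facts on global holomorphic vector fields on algebraic surfaces and elementary Kleinian-group considerations governing the monodromy of Riccati and turbulent foliations. Foliations of general type -- and more generally those with hyperbolic generic leaf -- are excluded outright, and one checks that among the remaining possibilities the requirement that \emph{every} leaf be non-hyperbolic leaves, up to birational equivalence, only fibrations (all leaves algebraic), Riccati foliations (transverse to the generic fibre of a ruling), and turbulent foliations (transverse to the generic fibre of an elliptic fibration); the few a priori exotic cases, such as linear foliations on complex tori, are birationally of one of these three kinds. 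The real obstacle is the passage, in Part 1, from ``many non-hyperbolic leaves'' to ``many non-hyperbolic \emph{enlarged} leaves'': a pole of a solution may land at a singular point of $\fol$ with wild local dynamics, and whether that point counts as belonging to the leaf is exactly what decides its conformal type. This is precisely what the dicriticalness analysis of Section~\ref{Dicriticalnessoffoliations} (Propositions~\ref{addingendstoleaves} and~\ref{dicritical_divisors}) is designed to settle.
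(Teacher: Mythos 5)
Your proposal is correct and follows essentially the same route as the paper: the same reduction to showing that no enlarged leaf is hyperbolic via Brunella's plurisubharmonic variation theorem, the same use of Propositions~\ref{addingendstoleaves} and~\ref{dicritical_divisors} and Lemma~\ref{Leaveswithoutholonomy} to place the landing points of the poles inside the enlarged leaves, and the same appeal to McQuillan's theorem as organised in \cite{IMPA-book} together with the classification of surfaces carrying global holomorphic vector fields for the final trichotomy. The only cosmetic difference is that you insist the image of each pole must be a singular point of $\fol$ (a claim whose justification overlooks the zero divisor of $X$ and tangency points with non-invariant components of $(X)_{\infty}$), whereas the paper simply notes that a regular landing point is harmless because $\Phi$ then extends holomorphically into the leaf itself; either way the argument proceeds identically.
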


As will be seen, Theorem~\ref{classification-foliation} is the main ingredient in the proof of Theorem~A. In turn, the
proof of Theorem~\ref{classification-foliation} involves some fine material from complex geometry/analysis. First, however, we shall
introduce some terminology needed to clarify the content of the theorem in question. Let then $N$ denote
a (possibly singular) compact complex surface equipped
with a singular holomorphic foliation $\fol$. The pair $(N, \fol)$ will be regarded up to birational equivalence as
it is standard in complex geometry.

Recall that a {\it fibration}\, on a complex surface~$N$ is nothing but a non-constant holomorphic map with connected
fibers $\calp : N \rightarrow S$ where $S$ stands for some compact Riemann surface. Clearly the set of critical values of
$\calp$ is a finite subset $\{ p_1, \ldots ,p_l\}$ of $S$ and the fibers of $\calp$ over the critical values are said to be
the {\it singular fibers}\, of the fibration (or of $\calp$). Note also that the map $\calp$ induces a $\mathcal{C}^{\infty}$-bundle
structure from $M \setminus \bigcup_{i=1}^l \calp^{-1} (p_i)$ to $S \setminus \{ p_1, \ldots ,p_l\}$. The genus of the corresponding
fiber is called the genus of $\calp$. Thus a fibration is of genus~$0$ if and only if its generic fiber is a rational curve.
Similarly, saying that the genus of the fibration is~$1$ is tantamount to saying that
its generic fiber is an elliptic curve.

Naturally a fibration is a special type of foliation. Next, assume that $N$ is equipped with a fibration $\calp : N \rightarrow S$
and with another singular foliation $\fol$. The foliation $\fol$ is said to be {\it transverse to $\calp$}\, if, away from finitely many
$\fol$-invariant fibers, the leaves of $\fol$ are transverse to the fibers of $\calp$.
Finally, if $\fol$ is transverse to some genus~$0$ fibration, then $\fol$ is called a {\it Riccati foliation}. Similarly a
foliation transverse to a genus~$1$ fibration is called a {\it turbulent foliation}.

A last comment is needed to make accurate what is the foliation $\fol_N$ induced by $\fol$ on $N$. Whereas $N$ is invariant
by $\fol$, the issue hinges from the fact that
our foliations are singular so that the notion of ``foliation induced by $\fol$ on $N$'' may, strictly speaking, differ from the
restriction of $\fol$ to $N$. To explain the difference, recall that $N$ has complex dimension~$2$ so that, by definition, every
holomorphic singular foliation on $N$ has only isolated singular points. The singular set ${\rm Sing}\, (\fol)$
of $\fol$, however, is only assumed to have codimension~$2$ so that the intersection $N \cap {\rm Sing}\, (\fol)$ may have
components of {\it dimension~$1$}\, (note that $N \cap {\rm Sing}\, (\fol)$ is a proper closed subset of $N$ since by assumption $N$ is not contained
in ${\rm Sing}\, (\fol)$). These ``curves of singular points in $N$'' must hence be eliminated by using Hilbert nullstellensatz
in codimension~$1$ so as to produce a singular holomorphic foliation on~$N$
according to Definition~\ref{foliation_definition}, c.f., the proof of
Lemma~\ref{lemma_meromorphicvfieldfoliation}. This gives rise to the following:

\begin{defi}
	\label{inducedfoliation_definition}
The singular holomorphic foliation on $N$ obtained after eliminating curves of singular points in $N \cap {\rm Sing}\, (\fol)$ (in the above
mentioned sense) is called the foliation induced by $\fol$ on $N$ and denoted by $\fol_N$.
\end{defi}
The notion of induced foliation, as opposed to restriction of a foliation, actually plays a role in the
proof of Theorem~\ref{classification-foliation}, albeit a minor one. This will stem from the
difference between a leaf of $\fol$ contained in $N$ and a
leaf of $\fol_N$: the latter being obtained from the former by (possibly) adding points lying in $N \cap {\rm Sing}\, (\fol)$.
Indeed, a point in $N \cap {\rm Sing}\, (\fol)$ may turn out to be {\it regular}\, for $\fol_N$. Also,
away from $N \cap {\rm Sing}\, (\fol)$, the leaves of $\fol_N$ locally coincide with the leaves of $\fol$ contained
in $N$.

We are now able to begin our approach to Theorem~\ref{classification-foliation}.
Let $M$, $X$, and $\fol$ be as in this theorem. According to standard terminology, a Riemann surface is said to
be {\it parabolic}\, if it is a quotient of $\C$. Similarly, a Riemann surface that is a quotient of the hyperbolic disc $\D$ is called
{\it hyperbolic}. Owing to the uniformization theorem, every Riemann surface different from $\Cpp$
is either parabolic or hyperbolic. The key step is the following proposition:

\begin{prop}
\label{No_hyperbolic-leaf}
Every enlarged leaf of $\fol$ is either $\Cpp$ or a parabolic Riemann surface.
\end{prop}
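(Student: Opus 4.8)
The strategy is to argue by contradiction using Brunella's plurisubharmonic variation theorem for the leafwise Poincaré metric. Suppose some enlarged leaf $\widehat{L}_0$ of $\fol$ is a hyperbolic Riemann surface. Brunella's theorem (as discussed in the introduction and in \cite{subharmonic-variation}, \cite{Brunella-notes}) asserts that, on the complex projective (hence Kähler) manifold $M$, the leafwise Poincaré metric of $\fol$ --- defined precisely using the notion of \emph{enlarged} leaf --- varies plurisubharmonically. Concretely, the function which assigns to a point the conformal factor comparing the foliated Poincaré metric to a fixed auxiliary Hermitian metric along the enlarged leaf is either identically $-\infty$ (all enlarged leaves are $\Cpp$ or parabolic) or it is a genuine plurisubharmonic function on a suitable saturated open set, and the locus where it equals $-\infty$ is then contained in a \emph{polar set}. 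So the contradiction will come from showing that the set $\mathcal{M}$ of points whose $X$-solution is meromorphic on $\C$ forces this $-\infty$ locus to be too large to be polar.

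The key intermediate point is that every point $p \in \mathcal{M}$ lies on an enlarged leaf that is $\Cpp$ or parabolic. Given $p \in \mathcal{M}$, the solution $\Phi : \C \to M$ of $X$ through $p$ is a non-constant meromorphic map, hence an entire curve tangent to $\fol$ whose image consists of the leaf $L_p$ together with (possibly) some singular points of $\fol$ sitting over the poles of $\Phi$ (by Lemma~\ref{lemma_invaraincepoledivisor}, no pole of $\Phi$ can correspond to a point where $\fol$ is transverse to the polar divisor $(X)_\infty$, so poles really do land at singular points of $\fol$ or are removable). Now I invoke the machinery of Section~\ref{Dicriticalnessoffoliations}: after deleting from $\mathcal{M}$ a set of Hausdorff dimension at most $2n-2$ --- which, since $\dim_H \mathcal{M} > 2n-2$ and Hausdorff dimension is unaffected by countable unions, leaves a set still of dimension $> 2n-2$ --- Proposition~\ref{addingendstoleaves} and Proposition~\ref{dicritical_divisors} guarantee that for the remaining points, each branch of $L_p$ defining a separatrix at a singular point $q$ of $\fol$ actually forces $q$ to be added to the branch in forming the enlarged leaf. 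Consequently the enlarged leaf $\widehat{L}_p$ receives precisely the points over the poles of $\Phi$ (one must also check, using Lemma~\ref{vanishingends_discreteset} and that the holonomy around such ends is finite, which can be arranged after a further dimension-$\le 2n-2$ deletion via Lemma~\ref{Leaveswithoutholonomy}, that the ends being filled in are vanishing ends in Brunella's sense). The upshot is that the normalization $\widehat{L}_p$ is uniformized by $\C$ via (an orbifold quotient of) the entire map $\Phi$: it is $\Cpp$ or parabolic, never hyperbolic.

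Therefore the conformal-factor function associated to Brunella's foliated Poincaré metric equals $-\infty$ at every point of a subset of $\mathcal{M}$ of Hausdorff dimension strictly greater than $2n-2$. By the third bullet in the ``Background on Hausdorff dimension'' paragraph (Theorem~3.13 of \cite{Who??}), such a set cannot be contained in the polar set of any non-trivial plurisubharmonic function on a Kähler compact manifold of complex dimension $n$. Since Brunella's theorem makes this conformal factor plurisubharmonic wherever it is not identically $-\infty$, we conclude it must be identically $-\infty$; equivalently, \emph{no} enlarged leaf of $\fol$ is hyperbolic, which is the assertion of the proposition.

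**Main obstacle.** The delicate step is not the potential-theoretic dichotomy but reconciling the entire curve $\Phi:\C\to M$ coming from a meromorphic solution with Brunella's specific notion of enlarged leaf: one must verify that the singular points over the poles of $\Phi$ are exactly the ones adjoined by the vanishing-end construction of Definition~\ref{defining_enlargedleaf}, neither more nor fewer, and that the local finite-order-holonomy and foliated-meromorphic-immersion conditions hold. This is precisely what Section~\ref{Dicriticalnessoffoliations} was built to handle, so the proof should consist in carefully citing Propositions~\ref{addingendstoleaves}, \ref{dicritical_divisors} and Lemma~\ref{Leaveswithoutholonomy} to pass to a full-dimensional sub-locus of $\mathcal{M}$ where the identification is clean, and then feeding the conclusion into Brunella's theorem.
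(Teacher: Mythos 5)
Your proposal is correct and follows essentially the same route as the paper: reduce via Brunella's plurisubharmonic variation theorem and the potential-theoretic fact that a set of Hausdorff dimension $>2n-2$ cannot be polar, and then use Propositions~\ref{addingendstoleaves} and~\ref{dicritical_divisors} (together with Lemma~\ref{Leaveswithoutholonomy}) to show that, away from an invariant set of dimension at most $2n-2$, the meromorphic solution extends to a holomorphic map from $\C$ into the \emph{enlarged} leaf, which is therefore non-hyperbolic. The only loose point is your appeal to Lemma~\ref{lemma_invaraincepoledivisor} to locate the images of the poles: the paper instead uses the graph-closure description of a meromorphic map on a one-dimensional disc (no indeterminacy, a single limit point, an irreducible Puiseux germ) to conclude that each pole lands at a single point which is either regular for $\fol$ (removable) or a singular point at which the leaf induces a separatrix.
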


The point of considering enlarged leaves $\widehat{L}$, as opposed to standard leaves $L$, lies in the fact
that the Poincar\'e metric on {\it enlarged leaves}\, varies in a pluri-subharmonic way, thanks to Brunella's
theorem in \cite{subharmonic-variation}, see also \cite{Ivaskovich} and
\cite{Brunella-notes}. This means the following. Let every enlarged leaf
of $\fol$ be equipped with its Poincar\'e metric: the unique complete metric with curvature equals to~$-1$, in the
case of a hyperbolic Riemann surface, and the totally vanishing ``metric'' otherwise. If $Z$ is a (locally defined)
holomorphic vector field tangent to $\fol$, then we have the (locally defined) function $P$ given by
$$
P (q) = \log (\Vert Z (q) \Vert)
$$
where $\Vert \, . \, \Vert$ stands for the norm of $X$ with respect to the above introduced foliated metric. Basically,
Brunella's theorem states that the function $P$ is {\it plurisubharmonic}\, unless it is constant equal to~$-\infty$.
Clearly the polar set of $P$ coincides with non-hyperbolic enlarged leaves of $\fol$. Therefore we obtain the
following corollary of Brunella's theorem:
if the set of {\it enlarged leaves}\, of $\fol$ that {\it are not} hyperbolic Riemann surface is too large to be a polar set,
then {\it every enlarged leaf of $\fol$ is either parabolic or a rational curve}.

In view of what precedes and of \cite{Who??}, the proof of Proposition~\ref{No_hyperbolic-leaf} becomes 
reduced to the proposition below:

\begin{prop}\label{Summarized-No_hyperbolic-leaf}
	The set of leaves of $\fol$ giving rise to enlarged leaves that are either parabolic Riemann surfaces or rational curves
	has Hausdorff dimension strictly greater than $2n-2$.
\end{prop}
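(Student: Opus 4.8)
The plan is to start from the hypothesis set $\mathcal{M} \subset M \setminus (X)_\infty$, which has Hausdorff dimension strictly greater than $2n-2$ and consists of points through which the solution of $X$ is a meromorphic map $\Phi : \C \to M$. Each such $\Phi$ restricted to $\C \setminus \mathcal{D}$ (with $\mathcal{D}$ the polar set) is a holomorphic map whose image is a leaf-or-union-of-leaf-pieces of $\fol$; more precisely it yields an entire curve tangent to $\fol$ in the sense discussed in the introduction. The first step is to observe that for $p \in \mathcal{M}$, the leaf $L_p$ of $\fol$ through $p$ is parameterized by $\C$ minus the (discrete) set of times $t_i$ at which $\Phi$ either hits a pole of $X$, or passes through $\mathrm{Sing}(\fol)$, or passes through a regular point of $\fol$ lying on an $\fol$-invariant component of $(X)_\infty$. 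By Lemma~\ref{lemma_invaraincepoledivisor}, the solution cannot cross a pole divisor transversally, so each $t_i$ corresponds either to a singular point of $\fol$ or to a point where the image curve runs into an invariant codimension-one set; in the latter case the curve $\Phi$ is not escaping to infinity in $M$ (as $(X)_\infty$ is a divisor in the compact $M$), so this behavior is controlled. The upshot I want is: for all but a ``small'' (Hausdorff dimension $\le 2n-2$) set of $p \in \mathcal{M}$, the entire curve $\varphi = \Phi$ meets $\mathrm{Sing}(\fol)$ at the values $t_i$, each such crossing producing a germ of analytic curve through a singular point, i.e.\ a separatrix induced by $L_p$.

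The second and central step is to feed this into the machinery of Section~\ref{Dicriticalnessoffoliations}. Having removed a Hausdorff-negligible subset, the remaining points of $\mathcal{M}$ give leaves inducing separatrices for $\fol$ at singular points; since $\mathrm{Sing}(\fol)$ can be covered by finitely many relatively compact neighborhoods, the set of such separatrices inside some neighborhood $U$ of $\mathrm{Sing}(\fol)$ has Hausdorff dimension $> 2n-2$. Proposition~\ref{addingendstoleaves} then supplies an $\fol$-invariant set $\mathcal{L}_0 \subset U$ of Hausdorff dimension $> 2n-2$ such that, for every leaf $L \subset \mathcal{L}_0$ and every local branch of $L$ defining a separatrix at a singular point $p$, that point $p$ must be added to the branch to form the enlarged leaf $\widehat L$. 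In other words, for the leaves in this large set, the enlarged leaf is obtained precisely by filling in all the $t_i$'s: $\widehat{L}_p$ is (up to orbifold normalization) the image of the \emph{whole} of $\C$ under the meromorphic-in-$M$ map $\Phi$, with the poles of $\Phi$ now becoming honest points of $\widehat{L}_p$. Since $\Phi$ is defined on all of $\C$ and surjects onto $\widehat{L}_p$ in this way, $\widehat{L}_p$ is either a rational curve (if $\Phi$ descends through a quotient making it compact) or a parabolic Riemann surface — in any case not a quotient of $\D$, because a holomorphic surjection from $\C$ onto a hyperbolic Riemann surface is impossible. Here one should be a little careful that the vanishing-end construction of Definition~\ref{defining_enlargedleaf} matches up with the discreteness of $\widehat{L}_p \setminus L_p$ (Lemma~\ref{vanishingends_discreteset}) and that the finite-holonomy orbifold points introduced do not spoil parabolicity — a finite orbifold quotient of a parabolic surface is still parabolic or rational.

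The final step is bookkeeping on Hausdorff dimension: the set of $p$ we discarded along the way — those whose solution crosses an invariant divisor rather than a singular point, those lying on a leaf with non-trivial holonomy (Lemma~\ref{Leaveswithoutholonomy}, dimension $\le 2n-2$), those in ${\rm Sep}_{\rm bad}$ from Proposition~\ref{dicritical_divisors}, and the countably many corrections in Proposition~\ref{addingendstoleaves} — forms a countable union of sets of Hausdorff dimension at most $2n-2$, hence itself of dimension at most $2n-2$. Since $\mathcal{M}$, and hence its image in the transverse sections of a Haefliger covering, has dimension strictly greater than $2n-2$, the complement still has dimension $> 2n-2$; every leaf in that complement gives an enlarged leaf that is parabolic or rational, which is exactly the assertion of Proposition~\ref{Summarized-No_hyperbolic-leaf}. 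The step I expect to be the main obstacle is the first one: rigorously identifying the poles $t_i$ of the solution $\Phi$ with separatrices of $\fol$ at singular points — i.e.\ ruling out, for a non-negligible set of initial conditions, the scenario where $\Phi$ is forced into an invariant component of $(X)_\infty$ where $X$ is not even defined — since, as the introduction stresses, we have made no assumption on how the poles vary with the initial condition, so the control has to come entirely from the foliation-theoretic side (Lemma~\ref{lemma_invaraincepoledivisor} and the structure of $(X)_\infty$) rather than from any analytic continuation of the polar locus.
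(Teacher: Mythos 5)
Your overall strategy coincides with the paper's: reduce to leaves whose meromorphic parameterization is strictly meromorphic, show that each pole of the parameterization produces a separatrix of $\fol$ at a singular point, invoke Proposition~\ref{addingendstoleaves} to get an invariant set of Hausdorff dimension $>2n-2$ on which these singular points are genuinely added to form the enlarged leaf, and conclude that $\widehat{L}$ receives a non-constant holomorphic map from $\C$ and hence cannot be hyperbolic. The Hausdorff-dimension bookkeeping at the end is also as in the paper.

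However, the step you yourself flag as ``the main obstacle'' is left unproved, and it is precisely the one piece of the argument that is not supplied by the results of Sections~\ref{Preliminaries} and~\ref{Dicriticalnessoffoliations}. The paper fills it with a short but essential argument based on Remmert's definition of meromorphic map: fix a pole $t_0$ of $\phi$ and a small disc $B_{\varepsilon}(t_0)$ meeting the polar set only at $t_0$. By definition of meromorphic map, $\overline{{\rm Graph}\,(\phi)} \subset B_{\varepsilon}(t_0)\times M$ is analytic with proper projection to the disc; since the base is $1$-dimensional there are no indeterminacy points, so the fiber over $t_0$ is finite, and injectivity of the projection over $B_{\varepsilon}(t_0)\setminus\{t_0\}$ forces this fiber to be a single point $P=(t_0,P_2)$. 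The graph is therefore an irreducible local analytic curve admitting a Puiseux parameterization $t\mapsto(t,\phi_1(t),\ldots,\phi_n(t))$, and its projection to $M$ is an irreducible analytic germ through $P_2$ contained in $L$ away from $P_2$. Hence either $P_2$ is regular for $\fol$ --- in which case $P_2\in L$ and $\phi$ extends holomorphically at $t_0$ with values in $L$, so there is nothing to add --- or $P_2\in{\rm Sing}\,(\fol)$ and the germ is a separatrix. Note that this dichotomy depends only on whether $P_2$ is singular for $\fol$; your detour through Lemma~\ref{lemma_invaraincepoledivisor} and the case distinction between transverse and invariant components of $(X)_{\infty}$ is not needed (a point of $(X)_{\infty}$ may perfectly well be regular for $\fol$, and then the extension argument above applies unchanged). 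Two minor further points: your claim that $\Phi$ \emph{surjects} onto $\widehat{L}_p$ is more than what is true or needed --- a non-constant holomorphic map $\C\to\widehat{L}_p$ already rules out hyperbolicity; and the reduction to strictly meromorphic solutions is not a ``discard a small set'' step but a dichotomy: if the holomorphically parameterized leaves already form a set of dimension $>2n-2$ the proposition is proved, and otherwise they may be ignored.
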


To begin the proof of Proposition~\ref{Summarized-No_hyperbolic-leaf},
consider a leaf $L$ of $\fol$ that is meromorphically parameterized by $\C$ via $X$. We denote by $\widehat{L}$
the enlarged leaf containing $L$. By assumption, the set of leaves $L$ meromorphically parameterized by $\C$ via $X$ has Hausdorff
dimension strictly larger than $2n-2$.

Assume first that the parameterization $\phi$ is actually holomorphic on all of $\C$. Then $L$ cannot be a hyperbolic Riemann surface
since there are non-constant holomorphic maps from $\C$ to $L$. The same conclusion clearly applies to $\widehat{L}$ since
$L \subseteq \widehat{L}$. In view of the preceding, up to ignoring a set of leaves having Hausdorff dimension at most $2n-2$,
we can assume without loss of generality that all meromorphic solutions $\phi : \C \rightarrow M$ of $X$ have non-empty
polar set. In other words, $\phi$ is strictly meromorphic.

At this point, it is convenient to recall the general definition of meromorphic maps between complex spaces along with some of their basic properties,
cf. \cite{remmert}. Let $A,B$ be {\it normal} complex spaces and consider an analytic set $C$ strictly contained in $A$. Let $F : A\setminus C
\rightarrow B$ be a holomorphic map. Then $F$ is said to be a {\it meromorphic map from $A$ to $B$} if the closure
$\overline{{\rm Graph}\, (F)} \subset A \times B$ of the graph of $F$
is an analytic subset of $A \times B$ and the restriction to $\overline{{\rm Graph}\, (F)} \subset A \times B$ of the
natural projection $\pi_A: A \times B \rightarrow A$ is proper. Under these conditions, the set $\overline{{\rm Graph}\, (F)}$
is called the {\it graph of the meromorphic map $F : A \rightarrow B$}. The projection $\pi_A : \overline{{\rm Graph}\, (F)}
\rightarrow A$ is clearly surjective. If $U \subset A$ is the largest open set on which $F$ can be extended as a holomorphic
map, then $I = A \setminus U$ is an analytic subset of $A$ with codimension at least~$2$
called the {\it indeterminacy set of $F$}. The set $\pi_A^{-1} (U)$
is open and dense in $\overline{{\rm Graph}\, (F)}$ and $\overline{{\rm Graph}\, (F)} \setminus \pi_A^{-1} (U)$
is a proper analytic subset of $\overline{{\rm Graph}\, (F)}$. Finally, a point $p \in A$ whose pre-image
$\pi_A^{-1} (p)$ has strictly positive dimension must belong to $I$.

We can now prove Proposition~\ref{Summarized-No_hyperbolic-leaf}.

\begin{proof}[Proof of Proposition~\ref{Summarized-No_hyperbolic-leaf}]
The proof amounts to showing that, within the set of leaves meromorphically parameterized by $\C$ via $X$,
there still exists an invariant subset with Hausdorff dimension strictly greater than $2n-2$ which consists of leaves $L$
satisfying the following condition: the meromorphic map $\phi : \C \rightarrow L$ extends to a {\it holomorphic map} from $\C$
to the {\it enlarged leaf}\, containing $L$.

Let $L$ be a leaf of $\fol$ meromorphically parameterized by the solution $\phi$ of $X$ defined on $\C$ and denote by $\mathcal{D} \subset \C$
the polar set of $\phi$. As previously seen, we can assume that $\mathcal{D}$ is not empty. Fix then $t_0 \in \mathcal{D} \subset \C$
and consider a disc $B_{\varepsilon} (t_0) \subset \C$ around $t_0$ whose radius $\varepsilon$ is small enough to ensure that
$B_{\varepsilon} (t_0) \cap \mathcal{D} = \{ t_0\}$. Next, consider the graph $\overline{{\rm Graph}\, (\phi)} \subset
B_{\varepsilon} (t_0) \times M$ of the meromorphic map $\phi$ restricted to $B_{\varepsilon} (t_0)$. Clearly $\phi$ has no indeterminacy points
points since $B_{\varepsilon} (t_0)$ has dimension~$1$ and a set of indeterminacy points would have codimension at least~$2$.
Therefore, in view of the preceding, the set of points in $\overline{{\rm Graph}\, (\phi)}$ projecting at $t_0$ is finite.
However, since this projection is injective for points lying over $B_{\varepsilon} (t_0) \setminus \{ t_0\}$, it follows
that there is a single point $P = (P_1, P_2)$ in $\overline{{\rm Graph}\, (\phi)}$ projecting to $t_0$. Thus, 
$\overline{{\rm Graph}\, (\phi)} \subset B_{\varepsilon} (t_0) \times M$ is a local analytic curve. Now, since
$\overline{{\rm Graph}\, (\phi)}$ clearly admits an irreducible Puiseux parameterization of the form
$t \mapsto (t, \phi_1(t), \ldots, \phi_n (t))$, it follows that the projection
on $M$ of $\overline{{\rm Graph}\, (\phi)}$ defines again a germ of analytic curve passing through $P_2$ and
contained in the leaf $L$ (except for the point $P_2$ itself). In other words, either $P_2$ is a singular point for $\fol$ and 
(the local branch of) $L$ defines a separatrix for $\fol$ at $P_2$ 
or $P_2$ is regular for $\fol$. In the latter case, it is clear that $P_2$ belongs to $L$ so that $\phi$ extends to $t_0$ as a
holomorphic map with values in $L$ (and hence in the enlarged leaf containing $L$). Thus, we can assume without loss of
generality that the former case happens.

Summarizing what precedes, we can assume the existence of an invariant set having Hausdorff dimension strictly larger than
$2n-2$ and such that all leaves contained in this set define separatrices for $\fol$ at suitable points. Now,
Proposition~\ref{addingendstoleaves} ensures that we can extract from the previous set another invariant set, still having Hausdorff dimension
strictly greater than $2n-2$, whose leaves $L$ are such that the corresponding enlarged leaves $\widehat{L}$ contain all singular points at which
$L$ defines separatrices.
Therefore, for every enlarged leaf $\widehat{L}$ contained in the set in question, $\phi$ extends to a holomorphic function from $\C$ with values
in $\widehat{L}$. This establishes both Proposition~\ref{Summarized-No_hyperbolic-leaf} and
Proposition~\ref{No_hyperbolic-leaf}.
\end{proof}

We are now able to derive Theorem~\ref{classification-foliation} from McQuillan's theorem \cite{mcquillan}
as formulated by Brunella in \cite{IMPA-book}, i.e., as a theorem about 
foliations tangent to a Zariski-dense entire map.

\begin{proof}[Proof of Theorem~\ref{classification-foliation}]
Owing to Proposition~\ref{No_hyperbolic-leaf}, we know that no enlarged leaf of the foliation $\fol$ is a hyperbolic Riemann surface.
Assume now that $N$ is a $2$-dimensional irreducible algebraic set invariant by $\fol$ and not contained in the singular
set of $\fol$. Denote by $\fol_N$ the foliation induced on $N$ by $\fol$. Here the reader is reminded that the leaves of $\fol_N$
might differ from the leaves of $\fol$ contained in $N$, c.f., Definition~\ref{inducedfoliation_definition}.
In the same order of ideas, enlarged leaves of $\fol$ contained in $N$ might differ from enlarged leaves of $\fol_N$. A word is
hence needed in order to conclude that {\it no enlarged leaf of $\fol_N$ is a hyperbolic Riemann surface}.

Consider then a point $p \in N \setminus {\rm Sing}\, (\fol)$ so that
the leaf of $L$ of $\fol$ through~$p$ and the leaf $L^{(N)}$ of $\fol_N$ through~$p$ locally coincide. The corresponding
enlarged leaves will be denoted by $\widehat{L}$ and $\widehat{L}^{(N)}$. To conclude that none of the enlarged leaves
$\widehat{L}^{(N)}$ is a hyperbolic Riemann surface it suffices to show that $\widehat{L}^{(N)}$ is obtained from
$\widehat{L}$ by adding points. In fact, since $\widehat{L}$ is non-hyperbolic, there are non-constant holomorphic
maps from $\C$ to $\widehat{L}$. However, since $\widehat{L}^{(N)}$ is obtained from $\widehat{L}$ by adding points, 
$\widehat{L}$ is naturally identified with a subset of $\widehat{L}^{(N)}$. Thus there are also non-constant holomorphic
maps from $\C$ to $\widehat{L}^{(N)}$ so that $\widehat{L}^{(N)}$ cannot be hyperbolic.

To check that $\widehat{L}^{(N)}$ is obtained from $\widehat{L}$ by adding points, we first consider the difference
between the leaves $L$ and $L^{(N)}$. For this, consider
a curve of singular points contained in $N \cap {\rm Sing}\, (\fol)$. The difference between $L$ and $L^{(N)}$ lies in the fact
that generic points in $1$-dimensional components of $N \cap {\rm Sing}\, (\fol)$ belong to $L^{(N)}$ though they also lie in
${\rm Sing}\, (\fol)$ and hence are certain to not belong to $L$ (whether or not they belong $\widehat{L}$). At this level
it is clear that $L^{(N)}$ is obtained from $L$ by adding points which, in particular, also belong to $\widehat{L}^{(N)}$.
Thus, it only remains to discuss the case of points $q \in N \cap {\rm Sing}\, (\fol)$ {\it that are actually singular for $\fol_N$}.
We need to show that if one such point $q$ is a vanishing end for $L$ it will necessarily be
a vanishing end for $L^{(N)}$ as well. This is, however, an immediate
consequence of the definition of vanishing end since the notion is clearly stable by restriction to subsets, see Section~\ref{Preliminaries}.
It follows that the enlarged leaves of $\fol_N$ are never hyperbolic Riemann surfaces.

In the sequel, we shall exclusively work with the foliation $\fol_N$ so that we can drop subscripts and denote a leaf
of $\fol_N$ simply by $L$ (and the corresponding enlarged leaf by $\widehat{L}$). Consider then an enlarged leaf
$\widehat{L}$ of $\fol_N$. Since $N$ is irreducible and of complex dimension~$2$,
either $\widehat{L}$ is Zariski-dense in $N$ or $\widehat{L}$ is contained in a compact curve (necessarily invariant
under $\fol_N$). In the latter case, note that the compact curve in question is either rational or elliptic
since $\widehat{L}$ is not hyperbolic, though this is not important in what follows. In fact, a well-known theorem
due to Jouanolou \cite{jouanolou} asserts that $\fol_N$ must admit a rational first integral provided that it leaves
invariant infinitely many (irreducible, compact) curves. In turn, if $\fol_N$ admits a rational first integral, then
it is clearly equivalent to a fibration so that the statement of Theorem~\ref{classification-foliation} holds.

It remains to consider the case where $\widehat{L}$ is Zariski-dense. Being non-hyperbolic, there follows that $\widehat{L}$
is a quotient of $\C$. Hence $\fol_N$ is a foliation
that is tangent to an {\it Zariski-dense entire curve}. Owing to McQuillan's theorem as presented in Chapter~9 of \cite{IMPA-book},
one of the following must hold, c.f., Theorem~4 in page 131 of \cite{IMPA-book} as well as Theorem~1 (page 118) and Corollary~1 (page 127)
for the classification of foliations having Kodaira dimension~$0$ or~$1$:
\begin{itemize}
	\item[(1)] Up to a finite ramified covering $\widetilde{N}$ of $N$, the foliation $\fol_N$ is defined by a (global) holomorphic vector field.
	\item[(2)] Up to birational equivalence $\fol_N$ is a Riccati foliation or a turbulent foliation.
	\item[(3)] A fibration.
\end{itemize}
As far as the statement of Theorem~\ref{classification-foliation} is concerned, we can assume without loss of generality
that~(1) holds. Complex compact surfaces equipped with holomorphic vector fields have been studied since long and there are
several ways to rely on the available information to derive Theorem~\ref{classification-foliation}. The argument provided below
has the advantage of relying exclusively on very classical material in the realm of complex (algebraic) geometry.

Consider then the pair $\widetilde{N}$ equipped with a (non-trivial) holomorphic vector field $Z$ whose underlying foliation
is denoted by $\fol_Z$. Here, $\widetilde{N}$ is a possibly singular projective surface. We need to show that up to birational
equivalence, the foliation $\fol_Z$ either is a fibration or as in item~(2) above.

\noindent {\it Claim}. Without loss of generality, $\widetilde{N}$ can be assumed to be smooth.

\noindent {\it Proof of the Claim}. Let ${\rm Sing}\, (\widetilde{N})$ denote the singular locus of $\widetilde{N}$. If not empty,
the set ${\rm Sing}\, (\widetilde{N})$ is constituted by isolated points and irreducible curves. However, ${\rm Sing}\, (\widetilde{N})$
is clearly invariant under the flow of $Z$ so that we actually have:
\begin{itemize}
	\item Every isolated point of ${\rm Sing}\, (\widetilde{N})$ is invariant under $Z$ (i.e., it is a singular point of $Z$).
	
	\item Every irreducible curve $C$ contained in ${\rm Sing}\, (\widetilde{N})$ is globally invariant under $Z$. Note that if the
	curve $C$ is singular, then its singular points are invariant by $Z$ as well.
\end{itemize}

Now, we apply a resolution procedure to $\widetilde{N}$. This procedure consists of two types of operations, namely, normalizations
and blow-ups (see for example \cite{barth}). A normalization must be centered at a smooth curve contained in ${\rm Sing}\, (\widetilde{N})$ and,
since the curve in question is globally invariant by $Z$, the corresponding transform of $Z$ still is a holomorphic vector field.
In turn, blow-ups are centered at isolated points of ${\rm Sing}\, (\widetilde{N})$ or at singular points of a curve in ${\rm Sing}\, (\widetilde{N})$.
In either case, they are invariant under $Z$ so that again the corresponding transform of $Z$ is a holomorphic vector field.
Proceeding by induction, it becomes clear that the singularities of $\widetilde{N}$ can be resolved in such way that the
final transform of the vector field $Z$ is still a holomorphic vector field. This establishes the claim.\qed

In the sequel we assume $\widetilde{N}$ to be smooth. We can also assume that $\widetilde{N}$ does not contain $(-1)$-rational curves, i.e.,
$\widetilde{N}$ can be assumed to be minimal. In fact, the transform of a holomorphic vector field arising from collapsing a
$(-1)$-rational curve is always holomorphic since it is holomorphic away from the point to which the curve collapsed. Since
we are dealing with a surface, the vector field must then extend holomorphically to the point in question since it has codimension~$2$.

The list of compact complex surfaces carrying a {\it non-singular}\,
holomorphic vector field was provided by Mizuhara \cite{Holvf} and it reads:
\begin{enumerate}
	\item A complex torus $\C^2 / \Lambda$;
	
	\item A flat holomorphic fiber bundle over an elliptic curve;
	
	\item An elliptic surface without singular fibers or with singular fibers of type
	$mI_0$ only. In other words, the singular fibers are elliptic curves with finite multiplicity;
	
	\item A Hopf surface or a positive Inoue surface.
\end{enumerate}
In turn, a compact complex surface carrying a holomorphic vector field exhibiting at least one singular point must either be
rational or a surface of class~VII as shown in \cite{car}. Note that Hopf surfaces and Inoue surfaces are themselves examples
of class~VII surfaces, see \cite{barth}. Since $\widetilde{N}$ is projective, and hence algebraic,
surfaces of class~VII can be ruled out of our discussion since they are never
algebraic, see \cite{barth} page~244. Also, a minimal rational surface is either a Hirzebruch
surface $F_n$, $n \neq 1$ or $\CP^2$.

It is immediate to check that holomorphic vector fields on a complex torus are constant and hence satisfy the conditions of
Theorem~\ref{classification-foliation} provided that the torus in question is algebraic. Similarly, holomorphic vector fields
on $\CP^2$ are ``linear'' and it is also immediate to check that they satisfy the conditions of the theorem in question.

In all the remaining cases, the surface $\widetilde{N}$ carries a (possibly singular) fibration of genus~$0$ or~$1$. Since
$Z$ is holomorphic, the classical Blanchard Lemma applies to say that $Z$ must preserve the fibration in question, c.f.,
\cite{Akh}, page~44.
If the fibers
are individually preserved by $Z$, then $Z$ is tangent to the mentioned fibration so that Theorem~\ref{classification-foliation} holds.
Finally, if the flow of $Z$ permutes the fibers of the mentioned fibration, then it is clear that the foliation
$\fol_Z$ will either be Riccati or turbulent depending only on the genus of the fibration. The proof of
Theorem~\ref{classification-foliation} is complete.
\end{proof}

\section{Proofs for Theorems~A and~B}\label{proofsoftheorems}

Deriving Theorems~A and~B from Theorem~\ref{classification-foliation} combined with the resolution theorems in
\cite{danielMcquillan} and \cite{helenajulio_RMS} is rather straightforward. We provide the details below for the convenience of
the reader.

Let us begin by recalling the definition of {\it Liouvillian function}.
Roughly speaking, a Liouvillian function is a function that can be built up from rational functions by means of algebraic
operations, exponentiation, and integrals. The following definition makes this notion accurate (see for example \cite{singer}).

\begin{defi}
	Let $k \subset K$ be differential fields. The field $K$ is said to be a Liouvillian extension of $k$ if there
	is a tower $k = K_0 \subset \cdots \subset K_s =K$ of differential fields such that $K_i = K_{i-1} (t_i)$, for
	$i=1, \ldots , s$, where at least one of the following holds:
	\begin{itemize}
		\item[(1)] The derivative $t_i'$ of $t_i$ lies in $K_{i-1}$ (i.e. $t_i$ is the integral of an element in $K_{i-1}$).
		
		\item[(2)] $t_i \neq 0$ and $t_i'/t_i$ lies in $K_{i-1}$ (i.e. $t_i$ is the exponential of an element in $K_{i-1}$).
		
		\item[(3)] $t_i$ is algebraic over $K_{i-1}$
	\end{itemize}
\end{defi}
If $N$ is a projective manifold, a {\it Liouvillian function}\, on $N$ is therefore an element of a Liouvillian extension
of the field of rational functions on $N$. If, in addition, $N$ is equipped with a foliation $\fol$, then a
{\it Liouvillian first integral}\, for $\fol$ is nothing but a Liouvillian function that is constant
over the leaves of $\fol$.

An immediate consequence of the above definition is that the existence of a Liouvillian first integral $\fol$ is a birational
invariant of the pair $(N, \fol)$. Similarly, a birational equivalence between two manifolds gives rise to a one-to-one correspondence
between the corresponding algebras of Liouvillian functions over $\C$.

\begin{remark}\label{Singerthm}
M. Singer has proved in \cite{singer} that a differential equation with Liouvillian first integrals must admit a transversely
affine structure. This statement was later generalized to codimension~$1$ foliations and an interesting treatment of the topic
can be found in \cite{zoladek}. These statements provide an alternative dynamical point of view in the notion of Liouvillian first integrals.
Note that the converse statement also holds. Namely, if
a codimension~$1$ foliation admits a transverse affine structure, then the developing map (see \cite{standardfoliation})
associated with the affine structure in question provides a Liouvillian first integral for the foliation.
\end{remark}

In the sequel, we place ourselves in the setting of Theorem~A. Thus $M$ is a projective manifold of dimension~$n$ and $X$,
$\fol$, and $\mathcal{M} \subseteq M$ are as in the statement of this theorem. Recall that $\mathcal{M}$ is assumed to have
Hausdorff dimension strictly greater than $2n-2$.

As follows from Proposition~\ref{No_hyperbolic-leaf}, all enlarged leaves of $\fol$ are
either $\Cpp$ or a parabolic Riemann surface so that the first assertion in Theorem~A is already proved. There remains
to prove the second assertion. Let then $N$
be an irreducible (possibly singular)
projective surface left invariant by $\fol$ and not contained in the singular set of $\fol$.
Denote by $\fol_N$ the foliation induced by $\fol$ on $N$.
We can assume from now on that $\fol_N$ has no meromorphic first integral, otherwise there is nothing to prove.
Owing to Jouanolou's theorem \cite{jouanolou} this ensures that only finitely many leaves of $\fol_N$ fail to be Zariski-dense.

In view of Theorem~\ref{classification-foliation}, there follows that $\fol_N$ is either a Riccati foliation or a turbulent one
(up to birational equivalence). A slightly more accurate statement is, however, possible:

\begin{lemma}
\label{amenable_holonomy}
If $\fol_N$ is a Riccati foliation, then its holonomy group is virtually solvable (i.e., it contains a solvable subgroup of finite index).
Similarly, if  $\fol_N$ is a turbulent foliation,
then its holonomy group is solvable and virtually abelian (i.e., it contains an abelian subgroup of finite index).
\end{lemma}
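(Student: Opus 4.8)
The plan is to realise the holonomy of $\fol_N$ as the monodromy of its transverse structure and to constrain that monodromy using Proposition~\ref{No_hyperbolic-leaf}. By Theorem~\ref{classification-foliation} we may assume $\fol_N$ is transverse to a fibration $\calp : N \to S$ of genus $0$ (Riccati) or $1$ (turbulent). Let $\Sigma = \{p_1,\dots,p_l\}\subset S$ be the finite set of base points whose fibers are singular or $\fol_N$-invariant, and put $S^{\ast} = S\setminus\Sigma$. Over $S^{\ast}$ the foliation is a flat connection for $\calp$; parallel transport along leaves identifies all fibers over $S^{\ast}$, so the fiber is a fixed $\Cpp$ in the Riccati case and a fixed elliptic curve $E$ in the turbulent case, the transverse structure is described by a monodromy representation $\rho : \pi_1(S^{\ast})\to\mathrm{Aut}(\text{fiber})$, and the holonomy group of $\fol_N$ is $G := \rho(\pi_1(S^{\ast}))$. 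Moreover a leaf $L_y$ of $\fol_N$ over $S^{\ast}$ is the covering of $S^{\ast}$ whose fundamental group is $\rho^{-1}(\mathrm{Stab}_G(y))$, with fiber the orbit $G\cdot y$. In the turbulent case this already ends the proof: $\mathrm{Aut}(E)$ fits into $1\to E\to\mathrm{Aut}(E)\to C\to 1$ with $C$ cyclic of order $2$, $4$ or $6$, hence $\mathrm{Aut}(E)$ and every subgroup is metabelian (in particular solvable) and contains the abelian subgroup $E\cap G$ with index at most~$6$, i.e.\ is virtually abelian (here the hypothesis is used only to be in the turbulent case).

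For the Riccati case, $G\le\mathrm{PSL}(2,\C)$, and Proposition~\ref{No_hyperbolic-leaf} really enters. If $S^{\ast}$ is \emph{not} a hyperbolic Riemann surface, then $S^{\ast}$ is one of $\Cpp$, $\C$, $\C^{\ast}$ or an elliptic curve, so $\pi_1(S^{\ast})$ is abelian and $G$ is abelian, a fortiori virtually solvable. So assume $S^{\ast}$ hyperbolic; then $\widetilde{S^{\ast}}\cong\D$, hence every leaf $L_y$ over $S^{\ast}$ is a quotient of $\D$ and is already a hyperbolic Riemann surface. The whole content is therefore that if $G$ is not virtually solvable one can choose $y$ so that the \emph{enlarged} leaf $\widehat{L}_y$ is still hyperbolic, contradicting Proposition~\ref{No_hyperbolic-leaf}. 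One splits according to the local monodromies $\rho(\gamma_i)$ around the punctures $p_i$.

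\emph{Case (a): some $\rho(\gamma_{i_0})$ has infinite order.} For any $y$ not fixed by $\rho(\gamma_{i_0})$, the connected component of $L_y$ over a punctured disc $\D^{\ast}$ around $p_{i_0}$ is the covering of $\D^{\ast}$ associated with $\{\,n:\rho(\gamma_{i_0})^n y = y\,\}=\{0\}$, i.e.\ the universal cover $\Omega\cong\D$. Since $\Omega$ is not homeomorphic to a punctured disc, it is not a vanishing end, so it persists in $\widehat{L}_y$: thus $\widehat{L}_y$ possesses a simply connected end-neighbourhood, which $\Cpp$, $\C$, $\C^{\ast}$ and tori do not, so $\widehat{L}_y$ is hyperbolic --- contradiction. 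Hence all $\rho(\gamma_i)$ have finite order $k_i$. \emph{Case (b):} then $\rho$ factors through $\pi_1^{\mathrm{orb}}(\widehat S)$ where $\widehat S = (S; p_1(k_1),\dots,p_l(k_l))$. If $\chi^{\mathrm{orb}}(\widehat S)\ge 0$, the group $\pi_1^{\mathrm{orb}}(\widehat S)$ is finite or virtually $\Z^2$, so $G$ is virtually solvable and we are done. If $\chi^{\mathrm{orb}}(\widehat S)<0$, then $\widehat S = \D/\Delta$ for a cocompact Fuchsian group $\Delta=\pi_1^{\mathrm{orb}}(\widehat S)$, and $\widehat{L}_y$ is (after normalisation) the orbifold cover of $\widehat S$ attached to $\bar\rho^{-1}(\mathrm{Stab}_G(y))\le\Delta$; for generic $y$ this equals $\D/\ker\bar\rho$. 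A nontrivial normal subgroup of a cocompact Fuchsian group is of finite index (whence $G$ is finite, done) or infinitely generated (whence $\D/\ker\bar\rho$ is $\D$ or of infinite topological type, hence a hyperbolic Riemann surface --- contradiction). Thus only $\chi^{\mathrm{orb}}(\widehat S)\ge 0$ survives, and $G$ is virtually solvable. This proves the lemma.

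I expect the main obstacle to be precisely the passage, in the Riccati case with hyperbolic base, from leaves to \emph{enlarged} leaves: since $S^{\ast}$ is hyperbolic all leaves over it are automatically hyperbolic, and one must rule out that adjoining vanishing ends converts a hyperbolic leaf into $\Cpp$ or a parabolic surface. Concretely, the work is in (i) verifying that an infinite-order local monodromy forces a genuine, non-closable disc-end (Case (a)), and (ii) analysing orbifold covers of a hyperbolic $2$-orbifold together with the fact that infinite-index nontrivial normal subgroups of cocompact Fuchsian groups are infinitely generated, so that the corresponding enlarged leaves remain of infinite type and hence hyperbolic (Case (b)). This is exactly the mechanism by which the all-parabolic-leaves hypothesis of Theorem~\ref{classification-foliation} forces the transverse holonomy to be elementary.
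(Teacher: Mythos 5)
Your turbulent case and your reduction of the holonomy to the monodromy representation of the transverse fibration agree with the paper (which likewise embeds ${\rm Aut}\,(E)$ in the affine group of $\C$ and uses that the translations form an abelian subgroup of index $2$, $4$ or $6$), and your treatment of a non-hyperbolic base and of the finite-order/orbifold situation is reasonable. The genuine gap is in your Case (a), which is exactly where the real difficulty of the Riccati case sits. You claim that if some local monodromy $\rho(\gamma_{i_0})$ has infinite order, then the simply connected component $\Omega\cong\D$ of the leaf over the punctured disc, not being a vanishing end, gives $\widehat{L}_y$ ``a simply connected end-neighbourhood, which $\Cpp$, $\C$, $\C^{\ast}$ and tori do not'' have, whence $\widehat{L}_y$ is hyperbolic. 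This topological claim is false: $\Omega$ is not a neighbourhood of an end in the Freudenthal sense (its frontier in the leaf is a non-compact line), and parabolic surfaces do contain exactly such regions --- the preimage of a punctured disc under $\exp:\C\rightarrow\C^{\ast}$ is a half-plane sitting over the puncture with infinite degree. Concretely, take the Riccati foliation over the thrice-punctured sphere whose monodromy is the infinite dihedral group inside the affine group, with local monodromies of orders $2$, $2$, $\infty$; its generic leaf is the regular cover described by $\cosh:\C\setminus\pi i\Z\rightarrow\Cpp\setminus\{\pm 1,\infty\}$, the order-$2$ punctured-disc ends are vanishing ends at the linearizable singular points of the invariant fibers, and the generic enlarged leaf is $\C$ --- parabolic --- even though the base is hyperbolic and one local monodromy is parabolic of infinite order. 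So infinite-order local monodromy is simply not an obstruction, your Case (a) ``contradiction'' does not exist, and the sub-case of a non-virtually-solvable group with some infinite-order local monodromy is left unproved.

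This is precisely the case the paper disposes of by a different mechanism: if the holonomy group $\Gamma\subset{\rm PSL}\,(2,\C)$ is not virtually solvable, the Tits alternative provides a free subgroup on two generators, hence $\Gamma$ has exponential growth; this forces the Zariski-dense leaves to have exponential growth, so they are hyperbolic Riemann surfaces, and since an enlarged leaf differs from a leaf only by a discrete set of points, the enlarged leaves are hyperbolic as well, contradicting Proposition~\ref{No_hyperbolic-leaf}. If you want to keep your covering-theoretic strategy, the dichotomy has to be drawn at the level of the group (virtually solvable versus containing a free group), not at the level of the orders of the individual local monodromies; as it stands, your argument would ``exclude'' foliations (such as the dihedral example above) that are perfectly compatible with the hypotheses of Theorem~\ref{classification-foliation}.
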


\begin{proof}
Let us first assume that $\fol_N$ is a turbulent foliation and denote by $\calp : N \rightarrow S$ the corresponding genus~$1$ fibration.
Note that the regular fibers of $\calp$ are pairwise equivalent as elliptic curves since local parallel transport along
the leaves of $\fol_N$ yields holomorphic diffeomorphisms between them. If $E$ denote the ``typical'' fiber of $\calp$,
then the holonomy group of $\fol$ is the image of the holonomy representation of the fundamental group of
$S$ in the group of automorphisms ${\rm Aut}\, (E)$ of the elliptic curve $E = \C / \Lambda$, where $\Lambda$ stands for a lattice of $\C$.
The possibilities for the automorphism group ${\rm Aut}\, (E)$ are well-known. First, ${\rm Aut}\, (E)$ is naturally realized
as a subgroup of the affine group of $\C$. In particular, ${\rm Aut}\, (E)$ must be solvable. To see that ${\rm Aut}\, (E)$
is a finite extension of an abelian group, let ${\rm Aut}_0 (E)$ stand for the subgroup of ${\rm Aut}\, (E)$
consisting of those automorphisms whose action on $H^1 (E)$ is trivial. It is well known that ${\rm Aut}_0 (E)$ coincides with the
group of automorphisms induced by translations of $\C$. In particular, ${\rm Aut}_0 (E)$ is abelian. It is also a standard fact that
the index of ${\rm Aut}_0 (E)$
in ${\rm Aut}\, (E)$ can take only on the following values: $2$, $4$, and $6$ (cf. \cite{hartshorne}, page 321). Therefore, 
${\rm Aut}\, (E)$ is a finite extension of an abelian group.

The case of Riccati foliations is slightly different in that the monodromy group is a subgroup of the automorphism
group of the Riemann sphere, namely ${\rm PSL} \, (2,\C)$. In general the holonomy group of a Riccati equation is
far from being virtually solvable.
However, the enlarged leaves of $\fol_N$ are parabolic Riemann surfaces and this imposes strong constraints on
the holonomy group in question. In fact, a subgroup of ${\rm PSL} \, (2,\C)$ that {\it is not}\, virtually solvable contains a free
subgroup on two generators (Tits alternative) and therefore has exponential growth: this forces the Zariski-dense
leaves of $\fol_E$ to have exponential growth as well. Clearly the exponential
growth of the leaves means that they are hyperbolic Riemann surfaces. Since the passage of a leaf to the corresponding
enlarged leaf only adds a discrete set of points to the former, the enlarged leaves must be hyperbolic themselves and
this yields a contradiction with Proposition~\ref{No_hyperbolic-leaf}. The proof of the lemma is complete.
\end{proof}

We are now ready to prove Theorem~A.

\begin{proof}[Proof of Theorem~A]
We keep the preceding notation. In particular, we can assume that the foliation $\fol_N$ is transverse to a (singular)
fibration $\calp : N \rightarrow S$ of genus~$0$ or~$1$. Let us first consider the case in which this fibration has genus~$1$.
Denote by $\Gamma$ the corresponding holonomy group. Lemma~\ref{amenable_holonomy}
establishes that $\Gamma$ is solvable and this yields the desired extension implied in the definition of Liouvillian functions.
In more precise terms, $\Gamma$ is a finite extension of the abelian group $\Gamma_0 = \Gamma \cap {\rm Aut}_0 (E)$: the solutions
are then expressed as a finite extension
of elliptic functions. Alternatively, to make the connection with Singer's theorem in Remark~\ref{Singerthm}, just observe that
the natural ``affine structure'' inherited by $E = \C / \Lambda$ from $\C$ is naturally preserved by $\Gamma$ since this
group is naturally contained in the affine group of $\C$.

Let us now consider the case where $\fol_N$ is a Riccati foliation. Again let $\Gamma$ denote the corresponding holonomy
group which is naturally a finitely generated subgroup of ${\rm PSL} \, (2,\C)$. We can assume $\Gamma$ is infinite, otherwise
all leaves of $\fol_N$ are compact and hence we have a rational first integral. In view of the discussion in
Remark~\ref{Singerthm}, it suffices to prove the existence of a transverse affine structure invariant by $\Gamma$. In particular,
it is enough to check that $\Gamma$ is conjugate to a
subgroup of the affine group ${\rm Aff}\, (\C) \subset {\rm PSL} \, (2,\C)$.

Assume first that $\Gamma$ is discrete and hence a Kleinian group. The group $\Gamma$ is, however, virtually solvable
(Lemma~\ref{amenable_holonomy})
and it is well known that infinite (virtually) solvable Kleinian
group are all {\it elementary} in the sense that their Limit sets consist of one or two points, see for example
\cite{apanasov}, pages~52, 53. The Limit
set being naturally invariant under $\Gamma$, either $\Gamma$ has a fixed point or $\Gamma$ has a subgroup
$\Gamma_2$ of index~$2$ having a fixed point. In the first case, the fixed point of $\Gamma$ can be used to construct
a conjugation between $\Gamma$ and some subgroup of ${\rm Aff}\, (\C)$. In the second case, $\Gamma_2$ has a Liouvillian
first integral and so has $\Gamma$ as a degree~$2$ extension of $\Gamma_2$.

Finally, assume now that $\Gamma$ is not discrete so that its closure $\overline{\Gamma} \subset {\rm PSL} \, (2,\C)$ is a Lie
subgroup of ${\rm PSL} \, (2,\C)$ with non-trivial Lie algebra. Clearly this Lie algebra must be strictly contained in the
Lie algebra of ${\rm PSL} \, (2,\C)$, otherwise $\Gamma$ would be dense in ${\rm PSL} \, (2,\C)$ and thus would fail to be virtually solvable.
This Lie algebra is therefore either abelian or conjugate to an affine Lie algebra. Again this implies that the
group has a fixed point or, at least, contains an index~$2$ subgroup possessing a fixed point (see for example \cite{Ford}, Chapter~6).
The existence of the desired Liouvillian first integral then follows as above. The proof of Theorem~A is complete.
\end{proof}

We can now prove Theorem~B as well. Note that in this statement all solutions of $X$ are assumed to be meromorphic maps
defined on $\C$. Note also that the fundamental theorem on reduction of singular points for one-dimensional foliation on $(\C^3,0)$
first appeared in \cite{danielMcquillan} although it relies heavily on the previous work of Panazzolo \cite{daniel2}. A different
proof is provided in \cite{helenajulio_RMS} along with some sharper results valid for foliations associated with complete vector fields.

\begin{proof}[Proof of Theorem~B]
First assume there is some sequence of blow ups for $M,X$ leading to a vector field $X'$ which happens to be {\it holomorphic}\, in
the corresponding projective manifold $M'$. In this case, a result in  \cite{helenajulio_RMS} asserts that the singularities
of the foliation associated with $X'$ can be reduced by standard blow ups while preserving the holomorphic nature of $X$. Item~(1)
of Theorem~B then holds and the resulting ambient manifold is smooth.

If there is no birational model for $M,X$ where $X$ becomes holomorphic, then we proceed to reduce the singular points of $\fol$
by resorting to either McQuillan-Panazzolo theorem in \cite{danielMcquillan} or to the reduction theorem in \cite{helenajulio_RMS}.
The transformed vector field $X'$ is strictly meromorphic by assumption. Therefore the polar divisor $(X')_{\infty}$ of $X'$ is
invariant by $\fol'$ thanks to Lemma~\ref{lemma_invaraincepoledivisor}. Note that, in applying Lemma~\ref{lemma_invaraincepoledivisor},
we use the assumption that all solutions of $X$ are meromorphic so as to ensure that some of them must ``cross'' the divisor if it were not invariant
(alternatively we might impose some condition ensuring that $(X')_{\infty}$ is ample). To conclude that Item~(2) holds
in this case, we only need to apply Theorem~A to each irreducible component of $(X')_{\infty}$. Theorem~B is proved.
\end{proof}

\section{Final comments and the variation of the leafwise polar set}\label{Appendix}

Let us close this paper with some comments/remarks implicitly appearing in the course of the discussion.

Let us begin by pointing out a difficulty that might go unnoticed when dealing with enlarged leaves as done in
Section~\ref{meromorphicsolutions_brunellatheorem}. At least if all the leaves of $X$ are meromorphically parameterized
by $\C$ via $X$, as in the case of Theorem~B, it is tempting to try to prove Proposition~\ref{Summarized-No_hyperbolic-leaf}
by proceeding as follows. Clearly it basically amounts to constructing the foliated meromorphic immersion required for the definition
of vanishing end of a leaf (at a given singular point). In this regard, we fix a local branch of a
leaf $L$ of $\fol$ defining a separatrix at a point $p \in {\rm Sing}\, (\fol)$.

We need to construct the foliated meromorphic immersion $f: \D^{n-1} \times \D \rightarrow M$
to conclude that $p$ lies in the enlarged leaf arising from $L$. We can assume that the holonomy of $L$ is trivial.
Up to a re-scaling, $f$ is defined on $\{ 0 \} \times \D$ so as to coincide with a solution $\phi$ of $X$ parameterizing $L$
restricted to a suitable open set. Also, we identify $\D^{n-1}$ with a local transverse section $\Sigma$ to $\fol$
so that the extension of $f$ to points of the form $q \times \C$, $q$ different from the origin, would be defined by following the
(suitably normalized) solutions of $X$ through these points.

The difficulty of this approach lies in the fact that it is not easy to conclude that $f$ is a meromorphic map from the
fact that its restriction to each leaf of $\fol$ is meromorphic. For example, there is no a priori reason to conclude that the
closure of the graph of $f$ on $(\D^{n-1} \times \D) \times M$ is an analytic set. This type of conclusion is more accessible
when we have some control on the volume of the images of the leaves or some information on how the polar set of the one-variable
meromorphic maps arising from the solutions of $X$ varies with the leaves (see \cite{bishop} and page 243 of \cite{chirka} for related material).

To overcome this difficulty, the discussion in Section~\ref{Dicriticalnessoffoliations} was carried out and
Proposition~\ref{addingendstoleaves} stated. In the course of the proof of this proposition, or more precisely of Proposition~\ref{dicritical_divisors},
it was shown that every set of separatrices possessing Hausdorff dimension strictly greater than $2n-2$ gives rise a non-invariant
divisor obtained by a suitable sequence of blow ups. With the notation of Section~\ref{Dicriticalnessoffoliations}, let us call a separatrix $\mathcal{C}$
of $\fol$ {\it weakly regular}\, if there exists a finite sequence of blow ups as in~(\ref{sequenceofblowups})
such that the transform $\mathcal{C}_k$ of $\mathcal{C}$ intersects the divisor
$\Pi_{(k)}^{-1} (Z_{k-1})$ transversely at a regular point of $\fol_k$ (in particular $\Pi_{(k)}^{-1} (Z_{k-1})$ is not invariant
by $\fol_k$).

Note that a consequence of the above definition is that the subset of $M$ consisting of all weakly regular separatrices of $\fol$
is {\it open} (possibly empty).
By elaborating slightly further on the argument used in Section~\ref{Dicriticalnessoffoliations},
the following can be proved:

\begin{prop}\label{presenceofweaklyregularseparatrices}
Let $U \subset M$ be a neighborhood of ${\rm Sing}\, (\fol)$ and assume that the set ${\rm Sep}$ of separatrices of $\fol$ in $U$ has
Hausdorff dimension $\kappa > 2n-2$. Then the set ${\rm Sep}_{\rm wreg}$ consisting of all weakly regular separatrices of $\fol$ satisfy
the conditions below:
\begin{itemize}
	\item ${\rm Sep}_{\rm wreg}$ is not empty and open in $M$.
	
	\item The Hausdorff dimension of the set ${\rm Sep} \setminus {\rm Sep}_{\rm wreg}$ is at most $2n-2$.
\end{itemize}
\mbox{} \qed
\end{prop}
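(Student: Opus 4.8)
The plan is to obtain the statement as a mild refinement of the analysis carried out in Section~\ref{Dicriticalnessoffoliations}, supplemented by an elementary openness remark, the non-emptiness of ${\rm Sep}_{\rm wreg}$ being a byproduct of the Hausdorff dimension estimate. For the openness, fix a weakly regular separatrix $\mathcal{C}$ together with a finite sequence of blow ups $\Pi_{(1)}, \ldots, \Pi_{(k)}$ as in~(\ref{sequenceofblowups}) witnessing it, so that the transform $\mathcal{C}_k$ meets $D = \Pi_{(k)}^{-1}(Z_{k-1})$ transversely at a point $q$ that is regular for $\fol_k$. Then $D$ is not $\fol_k$-invariant near $q$, and a small neighbourhood of $q$ in $M_k$ is a foliated box for $\fol_k$ in which every local leaf crosses $D$ transversely. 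Let $x$ be any point of $\mathcal{C}$ regular for $\fol$. Away from the (finitely many) exceptional loci, the composition $\Pi = \Pi_{(1)} \circ \cdots \circ \Pi_{(k)}$ is biholomorphic, so the leaves of $\fol$ through the points of a suitable neighbourhood of $x$ fill that neighbourhood and their transforms are exactly the local leaves of $\fol_k$ near the transform of $\mathcal{C}$, hence near $q$. Since regularity of $\fol_k$ and transversality to $D$ are open conditions, each such nearby transform still meets $D$ transversely at a point of $D$ regular for $\fol_k$; pushing it down by $\Pi$ and adjoining the corresponding point of $Z_{k-1}$ produces a weakly regular separatrix --- witnessed by the same sequence of blow ups --- through the chosen point near $x$. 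Hence a whole neighbourhood of $x$ lies in ${\rm Sep}_{\rm wreg}$, which proves openness.

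\emph{The dimension estimate.} Assume, aiming at a contradiction, that ${\rm Sep}_{\rm bad} = {\rm Sep} \setminus {\rm Sep}_{\rm wreg}$ has Hausdorff dimension strictly greater than $2n-2$. I would then run on ${\rm Sep}_{\rm bad}$ (in place of ${\rm Sep}$) the whole construction of Section~\ref{Dicriticalnessoffoliations}: the reduction to Proposition~\ref{PlayingBlowup_Proposition} and the blow-up analysis in the proofs of Proposition~\ref{dicritical_divisors} and Lemma~\ref{onemorelemma_rightbelow}. Exactly as there, if for \emph{every} admissible sequence of blow ups the transforms of a Hausdorff-dimension-$\kappa$ subfamily of ${\rm Sep}_{\rm bad}$ landed only at singular points of the successive foliations, then --- by the infinite-contact argument when the centres are points, and by the Fubini argument relative to $\bigcup_{p \in Z_j}$ when the centres $Z_j$ have positive dimension --- that family would have Hausdorff dimension at most $2n-2$, contradicting the hypothesis. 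Therefore there is a finite admissible sequence $\Pi_{(1)}, \ldots, \Pi_{(k)}$ along which the transforms of a subfamily $S \subset {\rm Sep}_{\rm bad}$ of Hausdorff dimension $\kappa$ land at the exceptional divisor $D = \Pi_{(k)}^{-1}(Z_{k-1})$ at points regular for $\fol_k$; since at a regular point of $\fol_k$ at most one separatrix branch can pass, the corresponding set of landing points is a subset of $D$ of Hausdorff dimension strictly greater than $2n-4$.

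\emph{Conclusion and main difficulty.} It would then remain to arrange that, on such a subfamily, the landing is transverse to $D$. This is automatic once the separatrices in $S$ have been rendered smooth and transverse to the successive blow-up centres --- which the construction of Section~\ref{Dicriticalnessoffoliations} does en route, a smooth curve transverse to the centre having a transform transverse to the exceptional divisor --- and in any event the locus of points of the non-invariant part of $D$ at which $\fol_k$ is tangent to $D$ is a proper analytic subset, hence of Hausdorff dimension at most $2n-4$; discarding from $S$ the separatrices landing on this locus removes only a set of Hausdorff dimension at most $2n-2$ (the Hausdorff dimension of a set being unchanged when a subset of strictly smaller Hausdorff dimension is removed). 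What is left is a subfamily of ${\rm Sep}_{\rm bad}$ of Hausdorff dimension strictly greater than $2n-2$ whose members meet $D$ transversely at points regular for $\fol_k$, i.e., a family of weakly regular separatrices contained in ${\rm Sep} \setminus {\rm Sep}_{\rm wreg}$ --- a contradiction. Hence ${\rm Sep} \setminus {\rm Sep}_{\rm wreg}$ has Hausdorff dimension at most $2n-2$, and since ${\rm Sep}$ has Hausdorff dimension $\kappa > 2n-2$ this forces ${\rm Sep}_{\rm wreg}$ to have Hausdorff dimension $\kappa$, in particular to be non-empty. I expect the main obstacle to be the bookkeeping in the middle step: making sure that re-running the machinery of Section~\ref{Dicriticalnessoffoliations} on ${\rm Sep}_{\rm bad}$ really yields a \emph{positive-dimensional} (indeed full-dimensional) subfamily of separatrices landing at the regular part of some exceptional divisor, rather than merely a single separatrix, so that the transversality adjustment of the last step has room to act.
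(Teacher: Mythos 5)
Your proposal is correct and follows exactly the route the paper intends: the paper offers no written proof of this proposition, stating it with a \qed as something obtained ``by elaborating slightly further'' on Section~\ref{Dicriticalnessoffoliations}, and your elaboration (openness via a foliated box around the regular landing point, the dimension bound by re-running the blow-up machinery on ${\rm Sep}\setminus{\rm Sep}_{\rm wreg}$, and the discarding of the tangency locus of $\fol_k$ with the non-invariant exceptional divisor) is precisely that elaboration. The ``main obstacle'' you flag at the end is in fact resolved by the countable-union/maximum property of Hausdorff dimension already used throughout Section~\ref{Dicriticalnessoffoliations}: since the subfamily landing at singular points must at some stage drop to dimension at most $2n-2$, the subfamily landing at regular points retains dimension $\kappa$, so your argument closes as written.
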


A consequence of Proposition~\ref{presenceofweaklyregularseparatrices} is that, for vector fields $X$ as in Theorem~B, the polar
set of the integral curves of $X$ locally varies holomorphically with the leaves away from a set having Hausdorff dimension at most~$2n-2$.

\end{document}